\newtheorem{theorem}{Theorem}[section]
\newtheorem{lemm}[theorem]{Lemma}
\newtheorem{prop}[theorem]{Proposition}
\newtheorem{coro}[theorem]{Corollary}
\theoremstyle{definition}
\newtheorem{defi}[theorem]{Definition}
\theoremstyle{remark}
\newtheorem{remark}[theorem]{Remark}
\numberwithin{equation}{section}
\def\om{\omega}
\def\dim{\hbox{dim}}
\newfont{\df}{eufm10}
\def\dim{\hbox{\rm dim}\,}
\begin{document}

\title[Novel isoclasses of one-parameter exotic small quantum groups]
{Novel isoclasses of one-parameter exotic small quantum groups originating from a two-parameter framework}
\author[Hu]{Naihong Hu{$^*$}}
\address{School of Mathematical Sciences, MOE Key Laboratory of Mathematics and Engineering Applications \& Shanghai Key Laboratory of PMMP, East China Normal University, Shanghai 200241, China}
\email{nhhu@math.ecnu.edu.cn}
\thanks{$^*$Corresponding author. This work is
supported by the NNSF of China (Grant No. 12171155), and in part by the Science and Technology Commission of Shanghai Municipality (Grant No. 22DZ2229014).}
\author[Xu]{Xiao Xu}
\address{School of Mathematical Sciences, MOE Key Laboratory of Mathematics and Engineering Applications \& Shanghai Key Laboratory of PMMP, East China Normal University, Shanghai 200241, China}
\email{52275500004@stu.ecnu.edu.cn}

\subjclass{Primary 17B37; Secondary 81R50}
\date{2023.06.09}

\keywords{restricted two-parameter quantum group; Lusztig small quantum groups, exotic small quantum groups, isomorphism reduction theorem.}

\begin{abstract}
The classification of one-parameter small quantum groups remains a fascinating open problem. This paper uncovers a novel phenomenon: beyond the Lusztig small quantum groups—equipped with double group-like elements \cite{Lusztig restricted JAMS 1990}—there exists a plethora of exotic small quantum groups, approximately fivefold more numerous than their standard counterparts, which originate from a two-parameter framework.
\end{abstract}

\maketitle
\section{Introduction}
Over the past two decades, a systematic investigation into two-parameter quantum groups has been undertaken, for instance, see \cite{BW1, BW2} for type $A$, \cite{Bai Xiaotang 2008, BH, BGH 2006, BGH 2007, Chen Hu and Wang to appear,HS} for other finite types; for a unified definition, see \cite{Hu  Pei  Sci. in China Ser A: Math 2008, HP};
\cite{GHZ, Hu Rosso Zhang 2008, HZ1, HZ2, JZ, JZ2} for the affine untwisted types;
\cite{JZ1, JZ3} for the affine twisted types; \cite{Bai Xiaotang 2008, Benkart Pereira Witherspoon 2010, Benkart Witherspoon Fields Institute Communications 2004,R Chen PhD,Chen Hu and Wang to appear, Hu Wang  Pacific J. Math 2009, Hu  Wang  JGP 2010} for the restricted two-parameter quantum groups for some of
finite types. Generalization to multi-parameter setting, see \cite{Pei Hu Rosso 2010}.
 It is the defining approach to multi-parameter quantum groups that leads to a one-parameter new quantum affine algebra $\mathcal U_q(\widehat{\mathfrak{sl}_2})$ (of infinite dimension) to be found in \cite{Feng Hu Zhuang JGP 2021} (we refer to it as the {\it admissible} quantum affine algebra),
 which is not isomorphic to the standard quantum affine algebra $U_q(\widehat{\mathfrak{sl}_2})$ as Hopf algebras in the case when $q$ is generic.

Against this backdrop, the present work focuses on two-parameter small (i.e., restricted) quantum groups in scenarios where both parameters $r$ and $s$ are roots of unity.
Our analysis yields striking classification results—even in the one-parameter case—that surpass prior expectations.

For the two-parameter small (i.e., restricted) quantum groups $\mathfrak u_{r,s}(\mathfrak{sl}_3)$ with $r, s$ of order $4$, in this case they have the Drinfeld double structure $\mathcal{D}(H)$.
Let us recall a work due to Benkart-Pereira-Witherspoon (\cite{Benkart Pereira Witherspoon 2010}),
in which they successively used some techniques such as: the Hopf $2$-cocycle twist-equivalences (Theorem 4.12 \cite{Benkart Pereira Witherspoon 2010}),
the Isomorphism Reduction Theorem of two-parameter small quantum groups of type $A$ due to the $1$st author and his coauthor (cf. Theorem 5.5 \cite{Hu  Wang  JGP 2010}), and a well-known result due to Radford
\cite{Radford construction J Algebra 2003} (see Theorem \ref{thm2.11}) and counting the dimension distributions of simple Yetter-Drinfeld $H$-modules in order to figure out a new isoclass, a one-parameter exotic small quantum group $\mathfrak u_{1,q}(\mathfrak{sl}_3)$, which is not isomorphic to the standard one $\mathfrak u_{q,q^{-1}}(\mathfrak{sl}_3)$ as pointed Hopf algebras.
This intrigues our curiosity to further probe the question:
How many isoclasses of one-parameter exotic (i.e., non-standard) small quantum groups that are of triangular decompositions but not isomorphic to the one-parameter standard ones (i.e., the Lusztig small ones)  can be derived from the two-parameter small quantum groups via the Isomorphsim Reduction Theorems (established by the $1$st author and his coauthors in \cite{Bai Xiaotang 2008, R Chen PhD, Chen Hu and Wang to appear,  Hu Wang  Pacific J. Math 2009, Hu  Wang  JGP 2010})?

The goal of this paper is to classify and enumerate the isoclass representatives as described in the Abstract with explicit presentations.

In order to yield a neat classification for the one-parameter exotic small quantum groups in the root of unity cases that arise from a two-parameter framework, we improve and obtain a refinement version of the Isomorphism Reduction Theorem aforementioned (see Theorem \ref{thm refineA}).
Building on this refined theorem and the PBW-Lyndon Basis Theorem, we derive a complete classification that applies not only to cases with low ranks and low orders but also to \(\mathfrak{u}_{r, s}(\mathfrak{g})\) of general rank and order.
In fact, we find that the order of $rs^{-1}$ is a sufficient strong invariant when classifying the isoclasses of $\mathfrak{u}_{r, s}(\mathfrak{g})$ with the nonprime order $\ell$ of $q$ (for instance, see Propositions \ref{prop A 6} and \ref{prop A 8}).
(Comparably, $\textrm{lcm}(|r|, |s|)$ is a weak invariant, as it corresponds to the dimension of $\mathfrak{u}_{r, s}(\mathfrak{g})$).
While for odd prime order $\ell$ of parameters $r, s$ (as roots of unity),
these restricted two-parameter quantum groups are of the Drinfeld double structures,
we use the Isomorphism Reduction Theorem to yield the complete classification lists of one-parameter small quantum groups of the Drinfeld quantum doubles (as pointed Hopf algebras) for types $A, B, C, D, F_4, G_2$ (see Theorems \ref{thm A general}, \ref{thm B general}, \ref{thm C general}, \ref{thm D general}, \ref{thm F general} and \ref{thm G general}).

As a valid verification of our classification results we obtained above, in the final section, we continue to adopt the representation-theoretic approach as used in \cite{Benkart Pereira Witherspoon 2010}.
As we have known, when those two-parameter small quantum groups are of Drinfeld double structure,
$H=\mathfrak u_{r,s}(\mathfrak b)$, the quantum Borel subgroup of $\mathfrak u_{r,s}(\mathfrak g)$,
their Yetter-Drinfeld $H$-module category $_H\mathcal {YD}^H$ is isomorphic to the category of $D(H)$-modules (\cite{Kas}, \cite{Majid 1991}).
Namely, we can distinguish them by differences in the dimension distributions of their simple modules (equivalently, simple Yetter-Drinfeld modules).
Assume $r=q^y, s=q^z$, where $q$ is a primitive $4$th root of unity.
Precisely, we can proceed with the following steps.
Firstly, we find that $\mathfrak{u}_{1, q}(\mathfrak{sl}_3),
\mathfrak{u}_{q, q^2}(\mathfrak{sl}_3)$ and $\mathfrak{u}_{q, q^3}(\mathfrak{sl}_3)=\mathfrak{u}_{q, q^{-1}}(\mathfrak{sl}_3)$ (the standard one) have Drinfeld double structure. Secondly, using the Hopf $2$-cocycle twist equivalence theorem,
we conclude that $\mathfrak{u}_{1, q}(\mathfrak{sl}_3)$ and  $ \mathfrak{u}_{q, q^2}(\mathfrak{sl}_3)$ have the same dimension distribution.
Finally, Benkart-Pereira-Witherspoon separated  $\mathfrak{u}_{1, q}(\mathfrak{sl}_3), \mathfrak{u}_{q, q^3}(\mathfrak{sl}_3)=\mathfrak{u}_{q, q^{-1}}(\mathfrak{sl}_3)$ by calculating the respective dimension distributions of simple modules.
Moreover, from the perspective of representation theory, we can also find many exotic small quantum groups when $q$ is $6$th or $8$th root of unity, apart from the one-parameter Lusztig small quantum group $\mathfrak{u}_{q, q^{-1}}(\mathfrak{sl}_3)$.

It is worthy to be mentioned that when the parameters as roots of unity have lower order:
for type $A$\;(with parameters of order $4, 5, 6, 7, 8$),
especially for type $A_2$\;(with parameters of order $4, 6, 8$),
for type $B, C, D,  F_4$\;(with parameters of order $5, 7$),
for type $G_2$ (with parameters of order $5, 7, 8$),
we have enumerated correspondingly $209$ isoclasses of new exotic one-parameter small quantum groups with triangular decompositions (most of them have the Drinfeld double structures),
associated to the $45$ isoclasses of one-parameter Lusztig small quantum groups with order $\ell\in\{4, 5, 6, 7, 8\}$. If one takes all types in the
$E$-series into account, the ratio of exotic to standard is approximately $5$.

In the root of unity setting,  this paper exhibits an interesting picture: Beyond the (one-parameter) Lusztig small quantum groups \cite{Lusztig restricted JAMS 1990, Lusztig 1993}, there exist numerous exotic one-parameter small quantum groups. The first author gratefully acknowledges Prof. Molev for posing a pivotal question during the Lie Theory meeting at the Tsinghua Sanya International Mathematics Forum (November 18, 2023): ``What about the generic parameter
$q$?". Indeed, the first author and collaborators constructed a novel one-parameter quantum affine algebra $
\mathcal U_q(\widehat{\mathfrak{sl}_2})$ of type
$A_1^{(1)}$ (see \cite{Feng Hu Zhuang JGP 2021, Hu Zhuang 2021}), termed the admissible quantum affine algebra, which is derived from the multi-parameter
setting (rather than the two-parameter setting discussed here) (see \cite{Pei Hu Rosso 2010}). This admissible object as Hopf algebra is proved to be not isomorphic to the standard quantum affine algebra $U_q(\widehat{\mathfrak{sl}_2})$ of Drinfeld-Jimbo type. This fully demonstrates that two- and multi-parameter quantum groups indeed offer us novel insights into Hopf algebra structure.

This paper is organized as follows.
In section 2, we recall some preliminaries needed in this paper: the unified definition of small two-parameters quantum groups, the Isomorphism Reduction Theorems for the small two-parameter quantum groups of types $A, B, C, D, F_4, G_2$,
and Radford's Theorem on simple Yetter-Drinfeld $H$-modules structures for a graded Hopf algebra $H$ with a finite abelian group algebra as its coradical.
Moreover, we also propose the Refined Isomorphism Reduction Theorem (Theorem 2.11), which will allow us to get neatly our classification results.
In section 3, we present our results obtained from the differences in their Hopf algebra structures for the above types.
Section 4 provides an alternative representation-theoretic approach to reach our classification results. Here, due to computational complexity, only a few illustrative examples in type $A_2$ with lower orders are provided.

It is believed that these novel and peculiar small quantum group structures, especially those without the Drinfeld double structure, may be appealing for the research of the recently popular non-semisimple topological quantum field theory (see Beliakova's recent work).

\section{Preliminaries}
\noindent{\it 2.1. \;}
  In \cite{Hu  Pei  Sci. in China Ser A: Math 2008},
the first author and Pei gave a unified definition of two-parameter quantum groups of any semisimple Lie algebras, using the Euler form.
  Let $C=(a_{ij})_{i, j\in I}$ be a Cartan matrix of finite type and $\mathfrak{g}$ the associated semisimple Lie algebra over $\mathbb{Q}$.
      Let $\{d_i\mid i\in I\}$ be a set of relatively prime positive integers such that $d_ia_{ij}=d_j a_{ji},\; i, j\in I$.
      Let $\langle\cdot, \cdot \rangle$ be the bilinear form, which is called the Euler form (or Ringel form),
      defined on the root lattice $Q$ by
      $$
      \langle i, j\rangle:=\langle \alpha_i, \alpha_j\rangle=\left\{
    \begin{array}{ll}                                     d_ia_{ij}, & \hbox{$i<j$,} \\
    d_i, & \hbox{$i=j$,} \\                               0, & \hbox{$i>j$.}                                     \end{array}                                       \right.
    $$

Thus the definition of small two-parameter quantum group is given as follows.

Assume that $r$ is a primitive $d$th root of unity,
$s$ is a primitive $d'$th root of unity, and $\ell$ is the least common multiple of $d$ and $d'$.
Set $r_i=r^{d_i}, s_i=s^{d_i}$.

\begin{defi} $($\cite{Bai Xiaotang 2008,BW1,R Chen PhD,Chen Hu and Wang to appear,Hu  Pei  Sci. in China Ser A: Math 2008,Hu Wang  Pacific J. Math 2009,Hu  Wang  JGP 2010}$)$
The small two-parameter quantum group $\mathfrak{u}_{r, s}(\mathfrak{g})$ is a unital associative algebra over an algebraically closed field  $\mathbb{K}\supset\mathbb{Q}(r, s)$,
generated by $e_i, f_i, \omega_i^\pm$, $\omega_i^{\prime\pm} \ (i\in I)$, subject to the following relations:

    \vspace{0.5em}

     $(R1)$ \ $\omega_i^{\pm 1}, \omega_j^{\prime \pm 1}$ commute with each other,
     $\omega_i\omega_i^{- 1}=1=\omega_j^\prime\omega_j^{\prime -1},\ \omega_i^\ell=\omega_i^{\prime \ell}=1$.

    \vspace{0.5em}
    $(R2)$ \ $\omega_ie_j\omega_i^{- 1}=r^{\langle j, i\rangle }s^{-\langle i, j\rangle} e_j,\quad
    \omega_i^\prime e_j\omega_i^{\prime -1}=r^{-\langle i, j\rangle}s^{\langle j, i\rangle} e_j$,

     \vspace{0.5em}
    $(R3)$ \ $\omega_i f_j\omega_i^{-1}=r^{-\langle j, i\rangle }s^{\langle i, j\rangle}f_j, \quad
    \omega_i^\prime f_j\omega_i^{\prime-1}=r^{\langle i, j\rangle }s^{-\langle j, i\rangle} f_j$,

      \vspace{0.5em}
    $(R4)$ \ $[e_i, f_i]=\delta_{ij}\dfrac{\omega_i-\omega_i^\prime}{r_i-s_i}$,

     \vspace{0.5em}
    $(R5)\ \sum\limits_{k=0}^{1-a_{ij}}(-1)^k\binom{1-a_{ij}}{k}_{r_is_i^{-1}}c_{ij}^{(k)}e_i^{1-a_{ij}-k}e_j e_i^k=0, \quad i\neq j$,

     \vspace{0.5em}
    $(R6)$ \ $\sum\limits_{k=0}^{1-a_{ij}}(-1)^k\binom{1-a_{ij}}{k}_{r_is_i^{-1}}c_{ij}^{(k)}f_i^k f_j f_i^{1-a_{ij}-k} =0,\quad i\neq j$,
    
   where $c_{ij}^{(k)}=(r_is_i^{-1})^{\frac{k(k-1)}{2}}r^{k\langle j, i\rangle} s^{-k\langle i, j\rangle}\ (i\neq j)$.
For a symbol $v$, we set the notations:
$$(n)_v=\dfrac{v^n-1}{v-1}, \quad (n)_v!=(1)_v(2)_v\cdots (n)_v,$$
$$\binom{n}{k}_v=\dfrac{(n)_v!}{(k)_v!(n-k)_v!}, \quad n\geq k\geq 0.$$ 

    \vspace{0.5em}
    $(R7)\;\;\; e_\alpha^\ell=f_\alpha^\ell=0$, where $\alpha$ is any positive root,
    where $e_\alpha, f_\alpha$ can be constructed in each type via good Lyndon words, for any positive root $\alpha$.

     \vspace{0.5em}
\noindent

The algebra $\mathfrak{u}_{r, s}(\mathfrak{g})$ has a Hopf structure with the comultiplication and the antipode given by
\begin{gather*}
\Delta(e_i)=e_i\otimes1+\omega_i\otimes e_i,\quad  \Delta(f_i)=1\otimes f_i+f_i\otimes \omega_i^\prime,\\
\varepsilon(\omega_i^{\pm1})=\varepsilon(\omega_i^\prime)^{\pm1}=1, \quad \varepsilon(e_i)=\varepsilon(f_i)=0,\\
S(\om_i^{\pm1})=\om_i^{\mp1}, \qquad
S({\om_i'}^{\pm1})={\om_i'}^{\mp1},\\
S(e_i)=-\om_i^{-1}e_i,\qquad S(f_i)=-f_i\,{\om_i'}^{-1}.
\end{gather*}
\end{defi}

\begin{prop} $($\cite{Bai Xiaotang 2008,BW1,R Chen PhD,Chen Hu and Wang to appear,Hu Wang  Pacific J. Math 2009,Hu  Wang  JGP 2010}$)$
  $\ \mathfrak{u}_{r, s}(\mathfrak{g})$ is a pointed Hopf algebra.
\end{prop}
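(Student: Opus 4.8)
The plan is to verify the coalgebra-theoretic condition defining pointedness directly from the Hopf structure given in Definition 2.2, without invoking any PBW or triangular-decomposition input. Recall that a Hopf algebra $H$ is pointed precisely when its coradical $H_0$ coincides with the group algebra $\mathbb{K}G(H)$ of its group-like elements, equivalently when every simple subcoalgebra of $H$ is one-dimensional. I would establish the latter by exhibiting $\mathfrak{u}_{r,s}(\mathfrak g)$ as an algebra generated by group-like and skew-primitive elements and then appealing to the standard fact that any such Hopf algebra is automatically pointed.

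First I would record the relevant coalgebra data. From the comultiplication in Definition 2.2, each $\om_i^{\pm1}$ and $\om_i^{\prime\pm1}$ is group-like, so by relation $(R1)$ the subgroup $G=\langle \om_i^{\pm1},\om_i^{\prime\pm1}\mid i\in I\rangle$ of $\mathfrak u_{r,s}(\mathfrak g)$ is a finite abelian group and $\mathbb{K}G$ is a (pointed) Hopf subalgebra. Moreover $\Delta(e_i)=e_i\otimes 1+\om_i\otimes e_i$ exhibits $e_i$ as an $(\om_i,1)$-skew-primitive element, and $\Delta(f_i)=1\otimes f_i+f_i\otimes\om_i'$ exhibits $f_i$ as a $(1,\om_i')$-skew-primitive element. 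Hence $C:=\mathbb{K}G+\sum_{i\in I}(\mathbb{K}e_i+\mathbb{K}f_i)$ is a finite-dimensional subcoalgebra of $\mathfrak u_{r,s}(\mathfrak g)$ (closed under $\Delta$ and $\varepsilon$ by the formulas just displayed), and, being spanned by group-like and skew-primitive elements, $C$ is itself pointed with $C_0=\mathbb{K}G$. By construction $C$ generates $\mathfrak u_{r,s}(\mathfrak g)$ as an algebra, since $e_i,f_i,\om_i^{\pm1},\om_i^{\prime\pm1}$ are exactly the algebra generators listed in Definition 2.2.

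Finally I would invoke the general principle that if a Hopf algebra is generated as an algebra by a pointed subcoalgebra — in particular, by group-like and skew-primitive elements — then it is pointed (see, e.g., Montgomery's book, Corollary 5.1.14, or Radford's \emph{Hopf Algebras}). This gives $\mathfrak u_{r,s}(\mathfrak g)_0=\mathbb{K}G$, and the proposition follows. An equivalent route, which I would mention as a remark: $\mathfrak u_{r,s}(\mathfrak g)$ is the quotient Hopf algebra of the unrestricted two-parameter quantum group $U_{r,s}(\mathfrak g)$ of \cite{Hu  Pei  Sci. in China Ser A: Math 2008} by the Hopf ideal generated by $\om_i^\ell-1$, $\om_i^{\prime\ell}-1$, $e_\alpha^\ell$ and $f_\alpha^\ell$; since $U_{r,s}(\mathfrak g)$ is already known to be pointed and any quotient coalgebra of a pointed coalgebra is pointed, the claim follows once more.

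As for obstacles: there is essentially none of substance here, the statement being purely formal once Definition 2.2 is fixed. The only points that require a moment's care are (i) checking that $C$ is genuinely a subcoalgebra, which is immediate from the displayed comultiplications, and (ii) quoting the correct form of the ``generated by group-likes and skew-primitives $\Rightarrow$ pointed'' result. In particular the quantum Serre relations $(R5)$–$(R6)$ and the truncations $(R7)$ are irrelevant to pointedness, since passing to an algebra quotient by a Hopf ideal preserves it.
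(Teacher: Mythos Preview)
The paper does not supply its own proof of this proposition; it simply records the statement with citations to \cite{Bai Xiaotang 2008, BW1, R Chen PhD, Chen Hu and Wang to appear, Hu Wang  Pacific J. Math 2009, Hu  Wang  JGP 2010}. Your argument is correct and is exactly the standard one used in those references: the algebra is generated by group-like elements $\om_i^{\pm1},\om_i^{\prime\pm1}$ and skew-primitive elements $e_i,f_i$, and any Hopf algebra so generated is pointed (alternatively, it is a Hopf quotient of the pointed Hopf algebra $U_{r,s}(\mathfrak g)$).
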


\noindent{\it 2.2. \;}
We firstly recall the isomorphism reduction theorems for small two-parameter quantum groups of types $A, B, C, D, F_4, G_2$
(see Theorems \ref{thm isoA} to \ref{thm isoG2}).
For giving a neat classification, we need to improve and put forward the refined isomorphism reduction theorem (see Theorem \ref{thm refineA} and Corollaries \ref{cor isoA}, and \ref{cor2 isoA}).

\begin{theorem} $($\cite{Hu  Wang  JGP 2010}$)$ \label{thm isoA}
   Assume that $rs^{-1},\, r^\prime s^{\prime -1}$ are primitive $\ell$th roots of unity.
  Then $\varphi: \mathfrak{u}_{r,s}(\mathfrak{sl}_n)\cong \mathfrak{u}_{r^\prime, s^\prime}(\mathfrak{sl}_n)$
  as Hopf algebras if and only if either

 \vspace{0.5em}
  $(1)$ \ $(r^\prime, s^\prime)=(r, s), \; \varphi$ is a diagonal isomorphism, that is,
  $\varphi(\omega_i)=\widetilde{\omega}_i, \;\varphi(\omega_i^\prime)=\widetilde{\omega}_i^{\prime} ,\;
  \varphi(e_i)=a_i\tilde{e}_i,\; \varphi(f_i)=a_i^{-1}\tilde{f}_i;\;$ or

 \vspace{0.5em}
  $(2)$ \ $(r^\prime, s^\prime)=(s, r),\;
  \varphi(\omega_i)=\widetilde{\omega}_i^{\prime  -1},\;
  \varphi(\omega_i^\prime)=\widetilde{\omega}_i^{-1},\;
  \varphi(e_i)=a_i\tilde{f}_i\widetilde{\omega}_i^{\prime -1},\;
  \varphi(f_i)=a_i^{-1}\widetilde{\omega}_i^{-1}\tilde{e}_i;\;$ or

 \vspace{0.5em}
  $(3)$ \ $(r^\prime, s^\prime)=(s^{-1}, r^{-1}),\;
  \varphi(\omega_i)=\widetilde{\omega}_{n-i},\; \varphi(\omega_i^\prime)=\widetilde{\omega}_{n-i}^{\prime},\;
  \varphi(e_i)=a_{n-i}\tilde{e}_{n-i}, \;
  $ $\varphi(f_i)=(rs)^{-1}a_{n-i}^{-1}\tilde{f}_{n-i};$\; or

 \vspace{0.5em}
   $(4)$ \ $(r^\prime, s^\prime)=(r^{-1}, s^{-1}),\;
   \varphi(\omega_i)=\widetilde{\omega}_{n-i}^{\prime  -1},\;
  \varphi(\omega_i^\prime)=\widetilde{\omega}_{n-i}^{-1},\;
  \varphi(e_i)=a_{n-i}\tilde{f}_{n-i}\widetilde{\omega}_{n-i}^{\prime -1}$,
  $\varphi(f_i)=(rs)^{-1}a_{n-i}^{-1}\widetilde{\omega}_{n-i}^{-1}\tilde{e}_{n-i} \ (a_i\in \Bbb K^\ast).$
\end{theorem}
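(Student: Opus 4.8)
The plan is to establish the nontrivial direction, namely that any Hopf algebra isomorphism $\varphi\colon\mathfrak u_{r,s}(\mathfrak{sl}_n)\xrightarrow{\ \sim\ }\mathfrak u_{r',s'}(\mathfrak{sl}_n)$ must fall into one of the four listed families; the converse is a routine verification that each prescribed assignment respects the relations $(R1)$--$(R7)$ and the coalgebra structure, which I would relegate to a direct check. First I would exploit that both algebras are pointed (Proposition 2.2), so $\varphi$ restricts to an isomorphism of coradicals $\mathbb K[G]\to\mathbb K[G']$, where $G=\langle\omega_i,\omega_i'\rangle$ and $G'=\langle\widetilde\omega_i,\widetilde\omega_i'\rangle$ are the respective group-likes. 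Thus $\varphi$ induces a group isomorphism $\bar\varphi\colon G\to G'$. Next I would analyze the skew-primitive elements: the space of nontrivial $(1,\omega_i)$-skew-primitives modulo trivial ones is spanned by $e_i$, and similarly for $f_i$ with $(1,\omega_i')$; since $\varphi$ carries skew-primitives to skew-primitives, it must send each $e_i$ (up to a group-like correction and scalar) to either a multiple of some $\widetilde e_{\pi(i)}$ or — after composing with the involved group-like — to the $\widetilde f$-side, governed by whether $\bar\varphi(\omega_i)$ lies in $\langle\widetilde\omega_j\rangle$ or in $\langle\widetilde\omega_j'\rangle$.

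The key computational step is then to pin down the permutation $\pi$ of the index set and the parameter relation. Applying $\varphi$ to relation $(R2)$, $\omega_ie_j\omega_i^{-1}=r^{\langle j,i\rangle}s^{-\langle i,j\rangle}e_j$, and comparing eigenvalues forces the matrix of Euler-form exponents to match up to the symmetries of the Dynkin diagram $A_{n-1}$ and the allowed parameter swaps; this is where the four cases $(r',s')\in\{(r,s),(s,r),(s^{-1},r^{-1}),(r^{-1},s^{-1})\}$ emerge, with $\pi=\mathrm{id}$ in the first two and $\pi(i)=n-i$ (the nontrivial diagram automorphism) in the last two. The braiding data $r_is_i^{-1}$ appearing in the quantum Serre relations $(R5)$--$(R6)$ is an invariant — it equals $q=rs^{-1}$ up to the $d_i$, which are all $1$ in type $A$ — and this pins the order constraint to the hypothesis that $rs^{-1}$ and $r's'^{-1}$ are both primitive $\ell$th roots of unity. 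Once $\pi$ and $(r',s')$ are fixed, relation $(R4)$, $[e_i,f_i]=\frac{\omega_i-\omega_i'}{r_i-s_i}$, determines how $\varphi(f_i)$ must be normalized relative to $\varphi(e_i)$: in cases $(1)$--$(2)$ one simply gets $\varphi(f_i)=a_i^{-1}\widetilde f_i$ (resp. the twisted form), while in cases $(3)$--$(4)$ the mismatch between $r_i-s_i$ and $s_i^{-1}-r_i^{-1}=(rs)^{-1}(s_i-r_i)$ produces exactly the displayed factor $(rs)^{-1}$.

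I expect the main obstacle to be the skew-primitive bookkeeping combined with ruling out spurious group-like corrections: a priori $\varphi(e_i)$ could be $a_i\widetilde e_{\pi(i)}$ plus a term of the form $(\text{scalar})(1-\widetilde\omega_{\pi(i)})$, and one must show, using that $\varphi$ is an algebra map and that the $e_i$ are part of a PBW--Lyndon basis (so that the positive part is a free algebra modulo Serre relations), that such correction terms are forced to vanish — equivalently, that the grading by $G$ is respected strictly. Handling the two "swap" cases $(2)$ and $(4)$, where $\varphi$ interchanges the roles of the upper and lower triangular parts, requires tracking the antipode and the comultiplication carefully, since $\Delta(e_i)=e_i\otimes1+\omega_i\otimes e_i$ has a different shape from $\Delta(f_i)=1\otimes f_i+f_i\otimes\omega_i'$; the element $\widetilde f_i\widetilde\omega_i'^{-1}$ is precisely the one that is again $(1,\widetilde\omega_i'^{-1})$-skew-primitive of the correct "$e$-type" shape, which is why it appears in the formulas for cases $(2)$ and $(4)$. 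All remaining verifications — that the candidate maps are well-defined Hopf algebra isomorphisms — reduce to substituting into $(R1)$--$(R7)$ and using the identities $\langle i,j\rangle+\langle j,i\rangle=d_ia_{ij}$ and the diagram symmetry $a_{n-i,n-j}=a_{ij}$.
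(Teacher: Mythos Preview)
The paper does not itself prove this theorem --- it is quoted from \cite{Hu  Wang  JGP 2010} --- and the only related argument the paper supplies (for the refined version, Theorem \ref{thm refineA}, together with Lemma \ref{lemm skewp}) proceeds exactly along the lines you describe: identify the skew-primitive spaces $P_{1,\sigma}$, deduce that $\varphi(\omega_i)\in\{\widetilde\omega_j,\widetilde\omega_j'^{-1}\}$ and that $\varphi(e_i)$ lands in the corresponding two-dimensional space $\mathbb K(1-\widetilde\omega_j)+\mathbb K\widetilde e_j$ or $\mathbb K(1-\widetilde\omega_j'^{-1})+\mathbb K\widetilde f_j\widetilde\omega_j'^{-1}$, and then finish by matching the commutation relations $(R2)$--$(R4)$ to pin down the permutation and the parameter pair. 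Your outline is correct and coincides with the standard approach indicated by the paper.
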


\begin{theorem} $($\cite{Hu  Wang  JGP 2010}$)$
  Assume that  $rs^{-1}, r^\prime s^{\prime -1}$ are primitive $\ell$th roots of unity with $\ell \neq 3,4$,
  and $\zeta$ is the $2$nd root of unity.
  Then $\varphi: \mathfrak{u}_{r,s}(\mathfrak{so}_{2n+1})\cong \mathfrak{u}_{r^\prime, s^\prime}(\mathfrak{so}_{2n+1})$
  as Hopf algebras if and only if either

 \vspace{0.5em}
  $(1)$ \ $(r^\prime, s^\prime)=\zeta(r, s), \ \varphi$ is a diagonal isomorphism

   \vspace{0.5em}
  $\varphi(\omega_i)=\widetilde{\omega}_i, \;\varphi(\omega_i^\prime)=\widetilde{\omega}_i^{\prime} ,\;
  \varphi(e_i)=a_i\tilde{e}_i, \;\varphi(f_i)=\zeta^{\delta_{i, n}} a_i^{-1}\tilde{f}_i;\;$ or

\vspace{0.5em}
$(2)$ \ $(r^\prime, s^\prime)=\zeta(s, r),\;
  \varphi(\omega_i)=\widetilde{\omega}_i^{\prime  -1},\;
  \varphi(\omega_i^\prime)=\widetilde{\omega}_i^{-1},\;
  \varphi(e_i)=a_i\tilde{f}_i\widetilde{\omega}_i^{\prime -1}, $

   \vspace{0.5em}
  $\varphi(f_i)=\zeta^{\delta_{i, n}}a_i^{-1}\widetilde{\omega}_i^{-1}\tilde{e}_i, \ (a_i\in \mathbb{K}^\ast).$
\end{theorem}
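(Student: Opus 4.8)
The plan is to establish the two implications separately. The ``if'' direction is a direct verification that each displayed map is a Hopf algebra isomorphism. The ``only if'' direction rests on the fact that $\mathfrak{u}_{r,s}(\mathfrak{so}_{2n+1})$ is pointed, so that a Hopf isomorphism is rigidly constrained both on the coradical and on the spaces of skew-primitive elements, together with the rigidity of the Dynkin diagram of type $B_n$.

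\textbf{Sufficiency.} For each of the two families $\varphi$ I would check that it preserves $(R1)$--$(R7)$ on generators, that $\Delta\varphi=(\varphi\otimes\varphi)\Delta$ and $\varepsilon\varphi=\varepsilon$, and that it is bijective (it is clearly surjective, and its inverse has the same shape, so bijectivity and compatibility with $S$ follow formally). Relations $(R1)$--$(R3)$ are immediate from the group relations among the $\widetilde\omega_i^{\pm1},\widetilde\omega_i^{\prime\pm1}$. The key point is $(R4)$: for $i<n$ one has $d_i=2$, so $r_i'=(\zeta r)^{d_i}=r_i$, $s_i'=s_i$, and the denominator $r_i-s_i$ is unchanged, whereas for $i=n$ one has $d_n=1$, so $r_n'-s_n'=\zeta(r_n-s_n)$, and this is exactly absorbed by the factor $\zeta^{\delta_{i,n}}$ appearing in $\varphi(f_i)$ --- this is the structural origin of that exponent. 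Relations $(R5)$--$(R6)$ use that the Gaussian-binomial base $r_is_i^{-1}$ and the order $\ell$ are invariant under $(r,s)\mapsto\zeta(r,s)$, together with the transformation of the coefficients $c_{ij}^{(k)}$; family $(2)$ uses in addition the evident symmetry of all the defining relations under $e_i\leftrightarrow f_i$, $\omega_i\leftrightarrow\omega_i^\prime$, $r\leftrightarrow s$ (so that on the target $r_i'-s_i'=-\zeta^{d_i}(r_i-s_i)$ in $(R4)$). Finally $(R7)$ survives because the root vectors $e_\alpha,f_\alpha$ are built functorially from good Lyndon words and $\ell$ is preserved.

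\textbf{Necessity.} Let $\varphi$ be a Hopf isomorphism. Since $\mathfrak{u}_{r,s}(\mathfrak{so}_{2n+1})$ is pointed, $\varphi$ maps the coradical $\mathbb{K}G$, with $G=\langle\omega_i^{\pm1},\omega_i^{\prime\pm1}\rangle$, onto $\mathbb{K}G'$, inducing a group isomorphism $\bar\varphi\colon G\xrightarrow{\sim}G'$; for $\ell\neq3,4$ the $2n$ elements $\omega_i,\omega_i^\prime$ are independent, so $G\cong(\mathbb{Z}/\ell\mathbb{Z})^{2n}$. The conjugation action of $G$ gives a $\widehat G$-grading of $\mathfrak{u}_{r,s}$ in which $e_i$ has a nontrivial degree (its value on $\omega_i$ is $r_is_i^{-1}\neq1$) while each $1-g$ has trivial degree, so $\varphi(e_i)$ has no $\mathbb{K}(1-g)$-component. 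I would then describe, in $\mathfrak{u}_{r',s'}$, the space of nontrivial $(1,g)$-skew primitives (those $x$ with $\Delta x=x\otimes1+g\otimes x$): from the PBW--Lyndon basis in coradical-degrees $0$ and $1$ it is spanned by the $\widetilde e_j$ with $\widetilde\omega_j=g$ together with the $\widetilde f_j\widetilde\omega_j^{\prime-1}$ with $\widetilde\omega_j^\prime=g^{-1}$ (one checks $\Delta(\widetilde f_j\widetilde\omega_j^{\prime-1})=\widetilde f_j\widetilde\omega_j^{\prime-1}\otimes1+\widetilde\omega_j^{\prime-1}\otimes\widetilde f_j\widetilde\omega_j^{\prime-1}$). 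Hence $\varphi(e_i)$ is, up to a nonzero scalar, a single $\widetilde e_j$ \emph{or} a single $\widetilde f_j\widetilde\omega_j^{\prime-1}$; since the Serre relations $(R5)$ hold among the $\widetilde e$'s and among the $\widetilde f$'s but not across, and the Dynkin diagram of $B_n$ is connected with no nontrivial automorphism, all of the $e_i$ must be of the same kind and the induced index permutation is trivial. The relation $[e_i,f_i]=(\omega_i-\omega_i^\prime)/(r_i-s_i)$ then forces $\bar\varphi(\omega_i),\bar\varphi(\omega_i^\prime)$ and the shape of $\varphi(f_i)$, leaving exactly the two cases. In the first case ($\varphi(e_i)=a_i\widetilde e_i$, $\varphi(\omega_i)=\widetilde\omega_i$, $\varphi(\omega_i^\prime)=\widetilde\omega_i^\prime$, $\varphi(f_i)=b_i\widetilde f_i$), applying $\varphi$ to $(R2)$ gives ${r'}^{\langle i,j\rangle}{s'}^{-\langle j,i\rangle}=r^{\langle i,j\rangle}s^{-\langle j,i\rangle}$ for all $i,j$, together with its $\omega^\prime$-analogue; writing $u=r'/r$ and $v=s'/s$, the adjacent off-diagonal pairs force $u^2=v^2=1$ while the diagonal entries give $(u/v)^{d_i}=1$, so $d_n=1$ yields $u=v=:\zeta$ with $\zeta^2=1$, i.e. $(r',s')=\zeta(r,s)$; then $(R4)$ gives $a_ib_i(r_i-s_i)=r_i'-s_i'=\zeta^{d_i}(r_i-s_i)$, whence $b_i=\zeta^{\delta_{i,n}}a_i^{-1}$ with $a_i\in\mathbb{K}^\ast$ free (as confirmed by the ``if'' direction), which is $(1)$. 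The second case is the same computation transported through the symmetry $e\leftrightarrow f$, $\omega\leftrightarrow\omega^{\prime-1}$, $r\leftrightarrow s$, and yields $(r',s')=\zeta(s,r)$ and the formulas in $(2)$; compare the parallel cases $(1)$--$(2)$ of Theorem~\ref{thm isoA}, the extra cases there reflecting the diagram flip of type $A$, which is absent in type $B_n$.

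\textbf{The expected main obstacle.} The heart of the proof, and the place where the hypothesis $\ell\neq3,4$ enters, is the precise identification of the nontrivial skew-primitive spaces and the exclusion of ``mixed'' or ``permuted'' behaviour of $\varphi$ on the $e_i$. For small $\ell$ the grouplikes $\widetilde\omega_j$ and $\widetilde\omega_j^{\prime-1}$ may coincide, enlarging those spaces and admitting spurious isomorphisms; for $\ell=3$ the length-$3$ Serre relation of the short--long pair degenerates against the relation $e_\alpha^{3}=0$ coming from $(R7)$; and for $\ell=4$ the algebra acquires a Drinfeld-double structure that alters the analysis. Once these degenerate orders are set aside, all remaining steps amount to bookkeeping with the defining relations and the PBW--Lyndon basis.
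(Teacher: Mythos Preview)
The paper does not itself prove this statement --- it is quoted from \cite{Hu  Wang  JGP 2010} in the preliminaries without proof. Your outline is precisely the standard approach and aligns with the method the paper sketches for its refined version (Theorem~\ref{thm refineA}): identify the skew-primitive spaces $P_{1,\sigma}$ (as in Lemma~\ref{lemm skewp} and its $\ell$th-root analogue), use pointedness to constrain $\varphi$ on the coradical and on $\mathfrak{u}_1$, then invoke the rigidity of the $B_n$ Dynkin diagram and read off the parameter relations from $(R2)$--$(R4)$.
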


\begin{theorem} $($\cite{R Chen PhD}$)$
  Assume that $n\geq 2$, and  $rs^{-1}\text{and}\; r^\prime(s^\prime)^{-1}$ are primitive $\ell$th roots of unity with $\ell \neq 2, 3$.
Then $\varphi: \mathfrak{u}_{r,s}(\mathfrak{sp}_{2n}) \cong \mathfrak{u}_{r^\prime, s^\prime}(\mathfrak{sp}_{2n})$
as Hopf algebras if and only if either
 $(r^\prime, s^\prime)=(r, s)$, or $(r^\prime, s^\prime)=(-r, -s)$.
\end{theorem}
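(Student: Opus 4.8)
The plan is to run the same machine that underlies Theorem~\ref{thm isoA}, now in the situation where the Dynkin diagram of $C_n$ $(n\geq2)$ has no nontrivial automorphism. Throughout, write $G=\langle\omega_i,\omega_i'\mid i\in I\rangle\cong(\mathbb{Z}/\ell)^{2n}$, and recall that $\mathfrak{u}_{r,s}(\mathfrak{sp}_{2n})$ is a pointed Hopf algebra with coradical $\mathbb{K}[G]$, that it has a triangular decomposition $\mathfrak{u}^-\otimes\mathbb{K}[G]\otimes\mathfrak{u}^+$, and that the PBW--Lyndon basis (together with $(R7)$) pins down the root vectors $e_\alpha,f_\alpha$ and, in degree one, the skew-primitive generators. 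Set $b_{ij}=r^{\langle j,i\rangle}s^{-\langle i,j\rangle}$, the scalar by which $\omega_i$ acts on $e_j$ in $(R2)$.

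\emph{The ``if'' direction.} For $(r',s')=(r,s)$ the diagonal rescalings $\varphi(\omega_i)=\widetilde\omega_i$, $\varphi(\omega_i')=\widetilde\omega_i'$, $\varphi(e_i)=a_i\widetilde e_i$, $\varphi(f_i)=a_i^{-1}\widetilde f_i$ $(a_i\in\mathbb{K}^\ast)$ are Hopf algebra isomorphisms; one records that these exhaust the diagonal automorphisms. For $(r',s')=(-r,-s)$ one exhibits an explicit isomorphism. The elementary point that makes this work is $(-r)^{d_i}(-s)^{-d_i}=r^{d_i}s^{-d_i}=r_is_i^{-1}$ for every $i$, so the quantum binomials $\binom{1-a_{ij}}{k}_{r_is_i^{-1}}$ in $(R5)$--$(R6)$ are literally unchanged; the only data to be reconciled under $r\mapsto-r,\ s\mapsto-s$ are the signs $(-1)^{\langle j,i\rangle-\langle i,j\rangle}$ in $(R2)$--$(R3)$, the sign of $r_i-s_i$ in $(R4)$, and the scalars $c_{ij}^{(k)}$. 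One absorbs them by composing a sign-rescaling of the Chevalley generators with a suitable automorphism of $G$; the bookkeeping is controlled entirely by the parities of the symmetrized Cartan matrix $(d_ia_{ij})$ of type $C_n$ (off-diagonal entries $-2$ on the long-root edge, $-1$ of constant parity along the short-root chain), and this is what lets the signs cancel. One then verifies $(R1)$--$(R7)$, noting that $(R7)$ survives because the root vectors change only by nonzero scalars. The hypotheses $\ell\neq2,3$ are used here only to keep $(R5)$--$(R6)$ non-degenerate (the relevant quantum integers must be invertible, and the long-edge Serre relation must not collapse).

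\emph{The ``only if'' direction.} Let $\varphi:\mathfrak{u}_{r,s}(\mathfrak{sp}_{2n})\cong\mathfrak{u}_{r',s'}(\mathfrak{sp}_{2n})$ be a Hopf algebra isomorphism. It carries the coradical onto the coradical, hence restricts to a group isomorphism $G\cong G'$; already matching the orders of the $\omega_i$ — equivalently $\dim\mathfrak{u}_{r,s}(\mathfrak{sp}_{2n})=\dim\mathfrak{u}_{r',s'}(\mathfrak{sp}_{2n})$ — constrains $(r',s')$. Next, $\varphi$ sends $(1,g)$-skew-primitives to $(1,\varphi(g))$-skew-primitives, and by the triangular decomposition the only nontrivial such spaces are $\mathbb{K}e_i$ (slot $(1,\omega_i)$) and $\mathbb{K}f_i\omega_i^{\prime-1}$ (slot $(1,\omega_i^{\prime-1})$); hence, modulo trivial skew-primitives, $\varphi(e_i)$ is a scalar multiple of $\widetilde e_{\pi(i)}$ or of $\widetilde f_{\pi(i)}\widetilde\omega_{\pi(i)}^{\prime-1}$, for a bijection $\pi$ of $I$, with $\varphi(\omega_i)$ correspondingly $\widetilde\omega_{\pi(i)}$ or $\widetilde\omega_{\pi(i)}^{\prime-1}$ up to a central group-like. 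Feeding this into the adjoint $G$-action of $(R2)$--$(R3)$ forces $\pi$ to preserve the braiding data $(b_{ij})$, hence to be an automorphism of the $C_n$ Dynkin diagram; since $\mathrm{Aut}(C_n)$ is trivial for $n\geq2$, $\pi=\mathrm{id}$, which already rules out the ``$i\mapsto n-i$''-type reductions of type $A$. Comparing the preserved eigenvalue data node by node then yields a system of multiplicative equations relating $(r',s')$ to $(r,s)$ over the group of $\ell$th roots of unity, and solving it — with $\ell\neq2,3$ discarding the degenerate branches, and with the long root $\alpha_n$ (where $d_n=2$) obstructing the $e\leftrightarrow f$ alternative that in types $A$ and $B$ would produce the swapped pair $(s,r)$ — forces $(r',s')\in\{(r,s),(-r,-s)\}$. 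One finishes by checking that $(R4)$ is consistent with each of the two surviving pairs.

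The step I expect to be the real obstacle is the endgame of the ``only if'' direction: once $\pi$ is known to be trivial, one must actually solve the multiplicative system and, crucially, establish the rigidity that separates $C_n$ from types $A$ and $B$ — namely that neither the swap $r\leftrightarrow s$ nor the inversions $r\mapsto r^{-1},\ s\mapsto s^{-1}$ survive here as isomorphisms. This is precisely where $n\geq2$ and $\ell\neq2,3$ are consumed, and where the asymmetry between the long and short roots of $C_n$ (in contrast with the self-dual type $A$ and the differently arranged type $B$) does the work. A secondary, more technical nuisance is making the explicit $(-r,-s)$ isomorphism of the ``if'' direction fully precise: choosing simultaneously the sign-rescaling of the generators and the accompanying automorphism of $G$ that neutralize every sign produced by $r\mapsto-r,\ s\mapsto-s$ in $(R2)$--$(R4)$ and in $c_{ij}^{(k)}$.
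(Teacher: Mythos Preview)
The paper does not supply its own proof of this statement; it is quoted from R.~Chen's thesis and used as input. Your outline is exactly the standard machine that the cited references run for the other classical types, and it is the same machine the present paper invokes (in abbreviated form) when proving Lemma~\ref{lemm skewp} and Theorem~\ref{thm refineA}: pin down the coradical, classify the nontrivial $(1,g)$-skew-primitive spaces, deduce that $\varphi$ sends each $e_i$ to a scalar multiple of some $\tilde e_j$ or some $\tilde f_j\tilde\omega_j'^{-1}$ with $\varphi(\omega_i)$ forced accordingly, use the $G$-eigenvalues in $(R2)$--$(R3)$ to see that the induced bijection on $I$ is a diagram automorphism (hence the identity for $C_n$, $n\ge2$), and then solve for $(r',s')$ from the resulting multiplicative constraints together with $(R4)$. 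So there is no in-paper argument to compare against line by line, and your plan matches the intended template.

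One point to tighten in your ``if'' direction for $(r',s')=(-r,-s)$: you propose to absorb the parity signs via a rescaling of the generators together with ``a suitable automorphism of $G$''. But the skew-primitive constraint you yourself use in the converse leaves no freedom on $G$ beyond the dichotomy $\varphi(\omega_i)\in\{\tilde\omega_i,\tilde\omega_i'^{-1}\}$; there is no room to twist by an extra central group-like. For $C_n$ with $n\ge3$ the symmetrized Cartan entries along the short-root $A_{n-1}$ subchain are odd ($d_ia_{i,i\pm1}=-1$), so the naive diagonal assignment $\varphi(\omega_i)=\tilde\omega_i$, $\varphi(e_i)=a_i\tilde e_i$ does \emph{not} intertwine $(R2)$ under $r\mapsto-r$, $s\mapsto-s$; contrast this with type $B$, where all $d_ia_{ij}$ are even and the analogous map in Theorem~2.4 works with $\varphi|_G=\mathrm{id}$. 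In the paper's applications this issue never bites, since $\ell$ is taken odd for type $C$ (see the Remark following Theorem~\ref{thm C general}), making $(-r,-s)$ fall outside the parameter set and the second alternative vacuous; but if you want a freestanding proof of the theorem as stated, the explicit $(-r,-s)$ isomorphism requires a more careful construction than your sketch indicates.
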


\begin{theorem} $($\cite{Bai Xiaotang 2008}$)$
  Assume that $n\geq 4,\; rs^{-1} \text{and }r^\prime(s^\prime)^{-1}$ are primitive $\ell$th roots of unity with $\ell \neq 2$.
Then $\varphi: \mathfrak{u}_{r,s}(\mathfrak{so}_{2n}) \cong \mathfrak{u}_{r^\prime, s^\prime}(\mathfrak{so}_{2n})$
as Hopf algebras if and only if either
 $(r^\prime, s^\prime)=(r, s)$ or $(r^\prime, s^\prime)=(s^{-1}, r^{-1})$.
\end{theorem}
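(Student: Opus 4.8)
The plan is to follow the template of the type $A$ case (Theorem~\ref{thm isoA}), adapting it to the much smaller symmetry group of the Dynkin diagram of type $D_n$.

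For the ``if'' direction I would exhibit the two families of isomorphisms and check that they respect relations $(R1)$--$(R7)$ and the Hopf structure. For $(r',s')=(r,s)$ these are the diagonal ones: keep the group-likes, $\varphi(\omega_i)=\widetilde\omega_i$, $\varphi(\omega_i')=\widetilde\omega_i'$, and rescale $\varphi(e_i)=a_i\widetilde e_i$, $\varphi(f_i)=a_i^{-1}\widetilde f_i$ with $a_i\in\mathbb{K}^{\ast}$; here $(R2)$--$(R4)$ hold on the nose, $(R5)$--$(R6)$ because the scalars $c_{ij}^{(k)}$ and the quantum binomials depend only on the Euler form and on $r_is_i^{-1}$, and $(R7)$ because the higher root vectors $e_\alpha,f_\alpha$ are iterated braided commutators of the $e_i,f_i$. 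For $(r',s')=(s^{-1},r^{-1})$ I would write down the $D_n$-analogue of the non-diagonal isomorphism of Theorem~\ref{thm isoA}: perform the parameter substitution $(r,s)\mapsto(s^{-1},r^{-1})$, accompany it by the twist on generators that this substitution forces through $(R2)$--$(R4)$ (unique up to the diagonal freedom just described), and then verify that the resulting assignment is a well-defined bialgebra homomorphism admitting a two-sided inverse. This is a bounded, if laborious, computation.

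The ``only if'' direction carries the real content. Given a Hopf algebra isomorphism $\varphi\colon\mathfrak u_{r,s}(\mathfrak{so}_{2n})\to\mathfrak u_{r',s'}(\mathfrak{so}_{2n})$, I would first use that $\varphi$ maps coradical to coradical, so restricts to an isomorphism of the group $\Gamma=\langle\omega_i^{\pm1},\omega_i'^{\pm1}\rangle$ of group-likes onto $\Gamma'$. Second, $\varphi$ sends skew-primitive elements to skew-primitive elements; analysing the $(g,1)$- and $(1,g)$-skew-primitive spaces of $\mathfrak u_{r',s'}(\mathfrak{so}_{2n})$ — modulo the trivial terms $c(g-1)$ these are one-dimensional, carried by $\widetilde e_k$, $\widetilde f_k\widetilde\omega_k'^{-1}$ and their group-like translates — and using that $\omega_ie_i\omega_i^{-1}=(r_is_i^{-1})e_i$ with $r_is_i^{-1}\ne1$ annihilates the trivial part, I would conclude that each $\varphi(e_i)$ is a nonzero scalar multiple of $\widetilde e_{\sigma(i)}$ or of $\widetilde f_{\sigma(i)}\widetilde\omega_{\sigma(i)}'^{-1}$ (and dually for $\varphi(f_i)$), which produces a bijection $\sigma\colon I\to I$ together with a choice of ``type'' at each node. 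Since a quantum Serre relation between two adjacent nodes is genuinely cubic whereas the lowest-degree relations among the remaining pairs ($e_i$ with a non-adjacent $e_j$, or $e_i$ with $f_j$ for $i\ne j$) are quadratic $q$-commutations, $\sigma$ must be a Dynkin-graph automorphism, and $D_n$ being connected forces the type choice to be global (all $e$'s preserved, or all $e$'s interchanged with $f$'s). Substituting these normal forms into $(R2)$--$(R4)$ then yields, for all $i,j$, exponential identities of the shape $(r')^{\pm\langle\sigma(i),\sigma(j)\rangle}(s')^{\mp\langle\sigma(j),\sigma(i)\rangle}=r^{\langle j,i\rangle}s^{-\langle i,j\rangle}$ together with a condition tying $\widetilde r_i-\widetilde s_i$ to $\pm(r_i-s_i)$; solving this system over the admissible $\sigma$ — where the hypothesis $\ell\ne2$, i.e.\ $\mathrm{ord}(rs^{-1})>2$, is used to rule out spurious sign solutions — pins $(r',s')$ down to the two listed pairs. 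The higher relations $(R5)$--$(R7)$ are then respected automatically, since they live in the subalgebra generated by the degree-$\le1$ part and their coefficients are already determined.

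The main obstacle, and the reason the $D_n$ list has only two entries against the four entries of the $A_n$ list in Theorem~\ref{thm isoA}, is the analysis of $\sigma$. For $n\ge5$ the automorphism group of the Dynkin diagram of $D_n$ is $\mathbb Z/2$, generated by the transposition of the two neighbours of the trivalent node; unlike the order-reversal available for $A_n$, this automorphism is a \emph{symmetry}, not an anti-symmetry, of the Euler form (the unique trivalent node obstructs any order-reversing graph symmetry, by a parity count on its three incident edges), so composing with it does not create new parameter pairs, and only the identity and the global $e\leftrightarrow f$ transpose contribute. The case $n=4$ will need separate treatment because of triality: one must check that the order-$3$ automorphisms of $D_4$ are likewise not anti-automorphisms of its Euler form, hence do not enlarge the list. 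Keeping the case bookkeeping honest throughout — solving the exponential system simultaneously in all indices, for each admissible $\sigma$ and each type choice, while carrying the $(R4)$-denominator constraint and the root-of-unity hypotheses — is where essentially all of the effort goes.
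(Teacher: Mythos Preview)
The paper does not supply its own proof of this statement: Theorem~2.6 is quoted verbatim from Bai's dissertation \cite{Bai Xiaotang 2008}, just as the surrounding Theorems~2.3--2.8 are quoted from their respective sources. So there is no in-paper argument to compare against. Your outline does match the template used in those cited references and echoed in the paper's own proof of Theorem~\ref{thm refineA}: restrict $\varphi$ to the coradical, locate each $\varphi(e_i)$ in a one-dimensional nontrivial skew-primitive space, extract a bijection $\sigma$ of $I$ plus a global $e$/$f$ type, force $\sigma$ to be a diagram automorphism via the quadratic-versus-cubic dichotomy in $(R5)$, and then read off $(r',s')$ from $(R2)$--$(R4)$.

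One caution about your heuristic bookkeeping. You attribute the pair $(s^{-1},r^{-1})$ to ``the global $e\leftrightarrow f$ transpose'' after arguing that the $D_n$ diagram automorphism, being a \emph{symmetry} of the Euler form, contributes nothing new. But in the type~$A$ list you are modelling on, the $e\leftrightarrow f$ transpose is case~(2) and produces $(s,r)$; it is the order-reversing diagram automorphism (case~(3)) that produces $(s^{-1},r^{-1})$. Your informal count therefore predicts the list $\{(r,s),(s,r)\}$, not the stated $\{(r,s),(s^{-1},r^{-1})\}$. This is not fatal---as you say, actually solving the system of identities $r'^{\pm\langle\sigma(i),\sigma(j)\rangle}s'^{\mp\langle\sigma(j),\sigma(i)\rangle}=r^{\langle j,i\rangle}s^{-\langle i,j\rangle}$ simultaneously with the $(R4)$ constraint is what settles the matter---but you should not trust the heuristic shortcut here; the type-$D$ Euler form interacts with the two global choices differently from type~$A$, and the honest resolution of that system (including why one of the na\"{\i}vely expected pairs is actually excluded) is precisely the content of the cited dissertation.
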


\begin{theorem} $($\cite{Chen Hu and Wang to appear}$)$
   Assume that  $rs^{-1}, r^\prime s^{\prime -1}$ are primitive $\ell$th roots of unity,
  and $\ell\neq 3, 4,\; \zeta$ is the $2$nd root of unity.
  Then $\varphi: \mathfrak{u}_{r,s}(F_4)\cong \mathfrak{u}_{r^\prime, s^\prime}(F_4)$ as Hopf algebras if and only if either

   \vspace{0.5em}
  $(1)$ \ $\;(r^\prime, s^\prime)=\zeta(r, s), \  \varphi$ is diagonal isomorphism

  \vspace{0.5em}
  $\varphi(\omega_i)=\widetilde{\omega}_i, \;\varphi(\omega_i^\prime)=\widetilde{\omega}_i^{\prime},\;
  \varphi(e_i)=a_i\widetilde{e}_i,\;
\varphi(f_i)=\zeta^{\delta_{i, 3}+\delta_{i, 4}} a_i^{-1}\widetilde{f}_i;\;$ or

 \vspace{0.5em}
$(2)$ \ $\;(r^\prime, s^\prime)=\zeta(s, r),\;
  \varphi(\omega_i)=\widetilde{\omega}_i^{\prime  -1},\;
  \varphi(\omega_i^\prime)=\widetilde{\omega}_i^{-1},\;
  \varphi(e_i)=a_i\widetilde{f}_i\widetilde{\omega}_i^{\prime -1},\;$

  \vspace{0.5em}
  $\varphi(f_i)=\zeta^{\delta_{i, 3}+\delta_{i, 4}}a_i^{-1}\widetilde{\omega}_i^{ -1}\widetilde{e}_i, \ (a_i\in \mathbb{K}^\ast).$
\end{theorem}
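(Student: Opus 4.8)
The plan is to follow the pattern of the companion reduction theorems for types $A$--$D$ above: first verify that the maps listed in $(1)$ and $(2)$ really are Hopf-algebra isomorphisms, and then show that an arbitrary Hopf isomorphism $\varphi$ must reduce to one of these two forms.

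For the \emph{``if''} part I would simply substitute the prescribed values of $\varphi$ on $e_i,f_i,\omega_i,\omega_i^\prime$ into $(R1)$--$(R7)$ and into the formulas for $\Delta,\varepsilon,S$. In case $(1)$ the only delicate point is $(R4)$: since $r_i^\prime-s_i^\prime=\zeta^{d_i}(r_i-s_i)$ (recall $r_i=r^{d_i}$) and $d_i$ is odd exactly at the two short nodes $i=3,4$, consistency of $[e_i,f_i]=\delta_{ij}\frac{\omega_i-\omega_i^\prime}{r_i-s_i}$ forces precisely the factor $\zeta^{\delta_{i,3}+\delta_{i,4}}$ on $\varphi(f_i)$; the quantum Serre relations $(R5),(R6)$ survive because $r_is_i^{-1}$, hence all $\binom{m}{k}_{r_is_i^{-1}}$ and the constants $c_{ij}^{(k)}$, is unchanged under $(r,s)\mapsto\zeta(r,s)$, and $(R7)$ survives because the common order $\ell$ of $rs^{-1}$ and $r^\prime s^{\prime-1}$ is unchanged. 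Case $(2)$ is the composite of the standard ``$e\leftrightarrow f$'' isomorphism $\mathfrak{u}_{r,s}(F_4)\cong\mathfrak{u}_{s,r}(F_4)$, $e_i\mapsto \widetilde f_i\widetilde\omega_i^{\prime-1}$, $f_i\mapsto\widetilde\omega_i^{-1}\widetilde e_i$, $\omega_i\mapsto\widetilde\omega_i^{\prime-1}$, $\omega_i^\prime\mapsto\widetilde\omega_i^{-1}$, which interchanges the two parameters with \emph{no} relabeling of nodes, with a map of the type in case $(1)$; it is checked the same way. Since $F_4$ admits no nontrivial Dynkin-diagram automorphism, no node-reversal cases analogous to $(3),(4)$ of Theorem \ref{thm isoA} arise.

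For the \emph{``only if''} part I would argue in three steps. \textbf{(a)} As $\mathfrak{u}_{r,s}(F_4)$ is a pointed Hopf algebra, $\varphi$ restricts to an isomorphism of coradicals, hence to a group isomorphism $G:=\langle\omega_i^{\pm1},\omega_i^{\prime\pm1}\rangle\cong G^\prime:=\langle\widetilde\omega_i^{\pm1},\widetilde\omega_i^{\prime\pm1}\rangle$ of finite abelian groups (each of order $\ell^{8}$, where $8=2\,\mathrm{rank}\,F_4$). \textbf{(b)} The coradical filtration gives $\varphi\bigl(P_{g,1}\bigr)=P_{\varphi(g),1}$, where $P_{g,1}=\{x:\Delta x=x\otimes1+g\otimes x\}$; reading off from a PBW--Lyndon basis that, modulo the trivial skew-primitives $\mathbb{K}(g-1)$, the only grouplikes carrying a nonzero skew-primitive are the $\omega_i$ (with $e_i$) and the $\omega_i^{\prime-1}$ (with $f_i\omega_i^{\prime-1}$), one obtains a bijection $\sigma$ of $\{1,2,3,4\}$ such that, for each $i$, either $\varphi(\omega_i)=\widetilde\omega_{\sigma(i)}$ and $\varphi(e_i)\in\mathbb{K}^{*}\widetilde e_{\sigma(i)}$, or $\varphi(\omega_i)=\widetilde\omega_{\sigma(i)}^{\prime-1}$ and $\varphi(e_i)\in\mathbb{K}^{*}\widetilde f_{\sigma(i)}\widetilde\omega_{\sigma(i)}^{\prime-1}$ (any admixture of a trivial skew-primitive being ruled out by comparing $G^\prime$-eigenvalues via $(R2)$--$(R3)$), and symmetrically for the $f_i$. \textbf{(c)} Conjugating $\varphi(e_i)$ by $\varphi(\omega_j)$ and comparing with $\varphi\bigl(\omega_j e_i\omega_j^{-1}\bigr)=r^{\langle i,j\rangle}s^{-\langle j,i\rangle}\varphi(e_i)$ then yields, in the no-swap case, the system $r^{\langle i,j\rangle}s^{-\langle j,i\rangle}=(r^\prime)^{\langle\sigma(i),\sigma(j)\rangle}(s^\prime)^{-\langle\sigma(j),\sigma(i)\rangle}$ for all $i,j$, and the analogous $e\leftrightarrow f$ system in the swap case.

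The step I expect to be the main obstacle is closing \textbf{(c)}: showing this system, for the Euler form of $F_4$ and with $rs^{-1},r^\prime s^{\prime-1}$ primitive $\ell$th roots of unity and $\ell\neq3,4$, forces $\sigma$ to be a Cartan-matrix automorphism of $F_4$ — hence $\sigma=\mathrm{id}$ — and $(r^\prime,s^\prime)$ to equal $(r,s)$ (no swap) or $(s,r)$ (swap) up to a common $2$nd root of unity $\zeta$. This is a finite but delicate case analysis over the $4!$ relabelings and the admissible diagonal rescalings, carried out once using the explicit $F_4$ Cartan/Euler data; the excluded orders $\ell\in\{3,4\}$ are exactly those where the braiding matrix $\bigl(r^{\langle i,j\rangle}s^{-\langle j,i\rangle}\bigr)$ develops accidental coincidences (e.g. $(rs^{-1})^{d_i}$ losing full order at the long nodes when $\ell=4$, or the $\abs{a_{ij}}=2$ quantum-Serre datum degenerating when $\ell=3$), so that the conclusion genuinely fails there. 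Once $\sigma=\mathrm{id}$ and $(r^\prime,s^\prime)\in\{\zeta(r,s),\zeta(s,r)\}$ are established, $(R4)$ applied to $[e_i,f_i]$ determines $\varphi(f_i)$ from $\varphi(e_i)$ up to the free scalar $a_i^{-1}$ and the sign $\zeta^{\delta_{i,3}+\delta_{i,4}}$ (again from $r_i-s_i\mapsto\zeta^{d_i}(r_i-s_i)$), while $(R5)$--$(R7)$ are automatically respected, so $\varphi$ has exactly the form $(1)$ or $(2)$. A more structural alternative to \textbf{(b)}--\textbf{(c)} would be to pass to $\mathrm{gr}\,\mathfrak{u}_{r,s}(F_4)$, a bosonization of a Nichols algebra of diagonal type, and invoke the classification of isomorphisms of such bosonizations via equivalences of diagonal braidings; but the generator-level argument above is self-contained and matches what is done in the cited companion papers.
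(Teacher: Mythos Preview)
The paper does not actually prove this theorem: it is stated in Section~2 as a preliminary result imported wholesale from \cite{Chen Hu and Wang to appear}, with no argument given beyond the citation. So there is no ``paper's own proof'' to compare against here.

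That said, your outline is the standard one and matches the methodology used in the companion papers (\cite{Hu Wang  Pacific J. Math 2009}, \cite{Hu  Wang  JGP 2010}) that the present paper explicitly invokes when it writes ``we can complete the proof similarly as those of Theorems~\ref{thm isoA}--\ref{thm isoG2}'' in its proof of Theorem~\ref{thm refineA}. The three-step architecture you describe --- (a) restriction to coradicals, (b) pinning down $\varphi(e_i)$ via the skew-primitive spaces $P_{1,\sigma}$ exactly as in Lemma~\ref{lemm skewp}, (c) extracting the parameter constraints from the commutation relations $(R2)$--$(R3)$ --- is precisely what those references do, and your identification of the sign $\zeta^{\delta_{i,3}+\delta_{i,4}}$ as coming from $r_i-s_i\mapsto\zeta^{d_i}(r_i-s_i)$ with $d_3=d_4=1$ odd is correct. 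Your honest flagging of step~(c) as the laborious part (the $4!$ relabeling case analysis forcing $\sigma=\mathrm{id}$ via the $F_4$ Euler form) is accurate; that is indeed where the work lies in \cite{Chen Hu and Wang to appear}, and the absence of a nontrivial diagram automorphism for $F_4$ is exactly why only two cases survive rather than four as in type~$A$.
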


\begin{theorem} $($\cite{Hu Wang  Pacific J. Math 2009}$)$ \label{thm isoG2}
  Assume that  $rs^{-1}, r^\prime s^{\prime -1}$ are primitive $\ell$th roots of unity,
  and $\ell\neq  4, 6, \;\zeta$ is a $3$rd root of unity.
  Then $\varphi: \mathfrak{u}_{r,s}(G_2)\cong \mathfrak{u}_{r^\prime, s^\prime}(G_2)$ as Hopf algebras if and only if either

   \vspace{0.5em}

  $(1)$ \ $\;(r^\prime, s^\prime)=\zeta(r, s),\  \varphi$ is diagonal isomorphism

   \vspace{0.5em}
  $\varphi(\omega_i)=\widetilde{\omega}_i, \;\varphi(\omega_i^\prime)=\widetilde{\omega}_i^{\prime} ,\;
  \varphi(e_i)=a_i\tilde{e}_i,\;
 \varphi(f_i)=\zeta^{\delta_{i, 1}} a_i^{-1}\tilde{f}_i;\;$ or

  \vspace{0.5em}

$(2)$ \ $\;(r^\prime, s^\prime)=\zeta(s, r),\;
  \varphi(\omega_i)=\widetilde{\omega}_i^{\prime  -1},\;
  \varphi(\omega_i^\prime)=\widetilde{\omega}_i^{-1},\;
  \varphi(e_i)=a_i\tilde{f}_i\widetilde{\omega}_i^{\prime -1},\;$

    \vspace{0.5em}
  $\varphi(f_i)=\zeta^{\delta_{i, 1}}a_i^{-1}\widetilde{\omega}_i^{ -1}\tilde{e}_i, \ (a_i\in \mathbb{K}^\ast).$
\end{theorem}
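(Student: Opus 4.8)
The plan is to establish the two implications separately: the ``if'' direction is a direct verification that each listed prescription extends to a Hopf algebra map with an evident inverse, while the ``only if'' direction combines the fact that $\mathfrak u_{r,s}(G_2)$ and $\mathfrak u_{r',s'}(G_2)$ are pointed Hopf algebras with a rigidity analysis of their skew-primitive elements, in the spirit of the type $A$ argument underlying Theorem~\ref{thm isoA}.

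For the ``if'' direction, in case $(1)$ the assignment is essentially the rescaling $(r,s)\mapsto\zeta(r,s)$. Relations $(R1)$, $(R5)$--$(R7)$ survive because $r_is_i^{-1}=(rs^{-1})^{d_i}$, the order $\ell$, the $(r_is_i^{-1})$-quantum binomials and the constants $c_{ij}^{(k)}$ are all unchanged once $\zeta^3=1$: the only parameter-sensitive quantities, the monomials $r^{\langle j,i\rangle}s^{-\langle i,j\rangle}$ occurring in $(R2)$--$(R3)$ and in $c_{ij}^{(k)}$, pick up a factor $\zeta^{\pm(\langle j,i\rangle-\langle i,j\rangle)}$, and the antisymmetric part of the Euler form of $G_2$ takes values in $3\mathbb{Z}$. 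The factor $\zeta^{\delta_{i,1}}$ on $\varphi(f_i)$ is then forced by $(R4)$: applying $\varphi$ to $[e_i,f_i]=\frac{\omega_i-\omega_i'}{r_i-s_i}$ and using $r_i'-s_i'=\zeta^{d_i}(r_i-s_i)$ gives $a_ib_i=\zeta^{d_i}=\zeta^{\delta_{i,1}}$ (since $d_1=1$, $d_2=3$, $\zeta^3=1$); compatibility with $\Delta$ is immediate because $\omega_i$ is group-like and $e_i,f_i$ are skew-primitive with the same attached group-likes. In case $(2)$ the map factors as the ``transpose'' isomorphism $\mathfrak u_{r,s}(G_2)\xrightarrow{\sim}\mathfrak u_{s,r}(G_2)$, $\omega_i\mapsto\widetilde\omega_i^{\prime-1}$, $\omega_i'\mapsto\widetilde\omega_i^{-1}$, $e_i\mapsto\tilde f_i\widetilde\omega_i^{\prime-1}$, $f_i\mapsto\widetilde\omega_i^{-1}\tilde e_i$, followed by a rescaling of type $(1)$ together with a diagonal automorphism absorbing the $a_i$; the transpose interchanges $(R2)\leftrightarrow(R3)$ and $(R5)\leftrightarrow(R6)$ (the $(r_is_i^{-1})$- and $(s_ir_i^{-1})$-binomials differing only by a power of the base that the reindexing $k\mapsto1-a_{ij}-k$ absorbs), and it respects $\Delta$ because $\Delta(\tilde f_i\widetilde\omega_i^{\prime-1})=\tilde f_i\widetilde\omega_i^{\prime-1}\otimes1+\widetilde\omega_i^{\prime-1}\otimes\tilde f_i\widetilde\omega_i^{\prime-1}$ has the same shape as $\Delta(e_i)$.

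For the ``only if'' direction, assume $\varphi$ is a Hopf isomorphism. Both algebras being pointed, with coradical the group algebra of the abelian group $G=\langle\omega_1,\omega_2,\omega_1',\omega_2'\rangle$ (resp. $G'$), $\varphi$ restricts to a group isomorphism $G\cong G'$. At the first level of the coradical filtration one determines the nontrivial skew-primitives: via the triangular decomposition and the PBW--Lyndon basis of $\mathfrak u_{r,s}(G_2)$ one shows that, modulo the trivial skew-primitives $\mathbb{K}(g-1)$, the relevant space is the span of the $e_i$ (each $(\omega_i,1)$-primitive) and the $f_i\omega_i^{\prime-1}$ (each $(\omega_i^{\prime-1},1)$-primitive), and these lines are mutually separated by the characters through which $G$ acts on them by conjugation --- for $e_i$ these are $\omega_j\mapsto r^{\langle i,j\rangle}s^{-\langle j,i\rangle}$, $\omega_j'\mapsto r^{-\langle j,i\rangle}s^{\langle i,j\rangle}$, and their inverses on the $f$-side. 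Since $\varphi$ carries skew-primitives to skew-primitives and intertwines the conjugation $G$-actions, it sends each $e_i$, up to a nonzero scalar and a group-like factor, to $\tilde e_{\tau(i)}$ or to $\tilde f_{\tau(i)}\widetilde\omega_{\tau(i)}^{\prime\pm1}$ for some permutation $\tau$ of $I=\{1,2\}$; because the Cartan matrix of $G_2$ is not symmetric the two nodes carry genuinely different Cartan data (read off from $\langle i,j\rangle$), which forces $\tau=\mathrm{id}$ and rules out a diagram flip --- this is why only two families appear here, against four for $\mathfrak{sl}_n$. Matching the conjugation characters of $\varphi(e_i)$ against those of $\tilde e_i$ (resp. $\tilde f_i\widetilde\omega_i^{\prime-1}$) then yields a system of monomial equations in $r',s'$ and $r,s$ whose only solutions, given that $r's'^{-1}$ and $rs^{-1}$ are both primitive $\ell$th roots of unity, are $(r',s')=\zeta(r,s)$ or $(r',s')=\zeta(s,r)$ with $\zeta^3=1$ --- the ambiguity $\zeta$ being exactly the indeterminacy of $r,s$ modulo the character monomials. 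Once the type of $\varphi$ is fixed, $(R2)$--$(R3)$ pin down the group-like images, $(R4)$ forces $\varphi(f_i)=\zeta^{\delta_{i,1}}a_i^{-1}\tilde f_i$ as in the ``if'' part, and $(R5)$--$(R7)$ add nothing since $\ell$ and the $(r_is_i^{-1})$-binomials are preserved; these are precisely the cases $(1)$ and $(2)$, the latter via the transpose isomorphism above.

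The technical heart, and the main obstacle, is the rigidity step: proving that the degree-one component of the coradically graded Hopf algebra $\mathrm{gr}\,\mathfrak u_{r,s}(G_2)$ is \emph{exactly} the span of the $e_i$ and $f_i\omega_i^{\prime-1}$ modulo trivial skew-primitives, with no accidental extra skew-primitives lurking in higher PBW degree. For $G_2$ this demands careful bookkeeping with the six positive roots (hence six Lyndon root vectors $e_\alpha,f_\alpha$ on each side) and the quantum Serre relations $(R5)$--$(R6)$, and it is precisely here that the hypothesis $\ell\neq4,6$ enters: for those small orders the relevant quantum binomials degenerate, $\mathfrak u_{r,s}(G_2)$ collapses in dimension, and new skew-primitives emerge --- which is exactly the source of the exotic isomorphisms studied elsewhere in this paper. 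Granting the rigidity, what remains is the routine root-of-unity bookkeeping outlined above.
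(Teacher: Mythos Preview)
The paper does not actually prove this theorem: it is quoted verbatim from \cite{Hu Wang  Pacific J. Math 2009} as one of several Isomorphism Reduction Theorems being recalled in \S2.2, and no argument is given here. So there is no ``paper's own proof'' to compare against directly.

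That said, your sketch is fully consistent with the methodology the paper does display in the closely related Lemma~\ref{lemm skewp} and Theorem~\ref{thm refineA}: determine the nontrivial skew-primitive spaces $P_{1,\sigma}$ via the coradical filtration and the PBW--Lyndon basis, observe that a Hopf isomorphism must preserve these spaces and the group-likes attached to them, deduce that $\varphi(\omega_i)\in\{\tilde\omega_j,\tilde\omega_j^{\prime-1}\}$ and hence $\varphi(e_i)$ lies in the corresponding two-dimensional skew-primitive space, and then finish by matching the conjugation characters from $(R2)$--$(R3)$ and the normalization from $(R4)$. Your identification of the role of $\ell\neq4,6$ (preventing degenerations in the quantum binomials that would create extra skew-primitives) and of the asymmetric $G_2$ Cartan matrix (forcing $\tau=\mathrm{id}$, hence only two families rather than four) is correct and is exactly the $G_2$-specific input. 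The proof in \cite{Hu Wang  Pacific J. Math 2009} follows this same template (cf.\ Lemma~4.3 there, invoked in the proof of Lemma~\ref{lemm skewp}), so your reconstruction matches both the cited source and the paper's internal cross-references.
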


\begin{remark}\label{rmk refine}
  For classification purpose, we will provide a refined reduction isomorphism theorem (see Theorem \ref{thm refineA}) .
  The condition that ``$rs^{-1}$ and $r's'^{-1}$ are primitive $\ell$th roots of unity'' is not essential and can be further improved.
  Indeed, by the formula below ,
  \begin{equation}\label{e_i}
    \Delta(e_j^a)=\sum_{i=0}^{a}s_j^{i(i-a)}\left[a\atop i\right]_j
e_{j}^i\omega_{j}^{a-i} \otimes e_{j}^{a-i},
  \end{equation}
\begin{equation}\label{f_i}
  \Delta(f_j^a)=\sum_{i=0}^{a}r_j^{i(i-a)}\left[a\atop i\right]_j
f_{j}^{a-i} \otimes\omega_{j}'^{a-i}f_{j}^i,
\end{equation}
where $[c]_j:=\frac{r_j^c-s_j^c}{r_j-s_j},\ [c]_j!:=[c]_j[c-1]_j\cdots
[2]_j[1]_j,\ \left[c\atop d\right]_j:=\frac{[c]_j!}{[d]_j![c-d]_j!}$,
one easily sees that the following facts holds.

 Assume $rs^{-1}$ is a primitive $m$th root of unity,
 then we have $[m]_i=0$, and $[1]_i, [2]_i,\cdots, $ $ [m-1]_i$ are non-zero.
 (\ref{e_i}) and (\ref{f_i}) show that $e_i^m$ and $f_i^m$, which can also be multiplied by some group-like elements, are also skew-primitive elements.
 In particular, when $rs^{-1}$ is a primitive $\ell$th root of unity,
 the group-like elements together with the skew-primitive elements of $\mathfrak{u}_{r, s}(\mathfrak{g})$ take the form $\mathfrak{u}_1$ of the coradical filtration $\{u_k\}$ of $\mathfrak{u}_{r, s}(\mathfrak{g})$ (for definition, see \cite{Benkart Witherspoon Fields Institute Communications 2004, CM})
  $$\mathfrak{u}_1:=\mathbb{K} G+\sum_{i=1}^{n}\mathbb{K}e_i G+\mathbb{K} f_i G,$$
  where $G$ is the group generated by the group-like elements of $\mathfrak{u}_{r, s}(\mathfrak{g})$,
  and $n=\text{rank}(\mathfrak{g})$.
\end{remark}

Recall that given two group-like elements $g, h$ in a Hopf algebra $H$, let
$$
P_{g,h}(H)=\{x\in H\mid \Delta(x)=x\otimes g+h\otimes x\}.
$$
Denote the set of  $(g, h)$ skew-primitive elements.

\begin{lemm}\label{lemm skewp}
Assume that $r$ is a primitive $d$th root of unity,
$s$ is a primitive $d'$th root of unity, and $\ell$ is the least common multiple of $d$ and $d'$.
Further, $rs^{-1}$ is a primitive $m$th root of unity, where $m\neq \ell$.
Then

$({\textrm{\rm i}})$ \;\
$P_{1,\omega_i}(\mathfrak{u}_{r,s})=\mathbb{K}(1-\omega_i)+\mathbb{K}e_i;$
$\quad
P_{1,\omega_i'^{-1}}(\mathfrak{u}_{r,s})=\mathbb{K}(1-\omega_i'^{-1})+\mathbb{K}f_i\omega_i'^{-1};$

$\qquad P_{1,\omega_i^m}(\mathfrak{u}_{r,s})=\mathbb{K}(1-\omega_i^m)+\mathbb{K}e_i^m;$
$\quad
P_{1,\omega_i'^{-m}}(\mathfrak{u}_{r,s})=\mathbb{K}(1-\omega_i'^{-m})+\mathbb{K}f_i^m\omega_i'^{-m};$

$\qquad P_{1,\sigma}(\mathfrak{u}_{r,s})=\mathbb{K}(1-\sigma), \quad
\textrm{for}\
 \sigma\not\in\{\omega_i, \, \omega_i'^{-1}, \omega_i^m,  \omega_i'^{-m} \mid 1\le i\le n\}.$

\smallskip
$({\rm ii})$\;
$P_{\omega'_i,1}(\mathfrak{u}_{r,s})=\mathbb{K}(1-\omega'_i)+\mathbb{K}f_i;$
$\quad
P_{\omega_i^{-1},1}(\mathfrak{u}_{r,s})=\mathbb{K}(1-\omega_i^{-1})+\mathbb{K}e_i\omega_i^{-1};$

$\qquad P_{\omega_i'^m, 1}(\mathfrak{u}_{r,s})=\mathbb{K}(1-\omega_i'^m)+\mathbb{K}f_i^m;$
$\quad
P_{\omega_i^{-m}, 1}(\mathfrak{u}_{r,s})=\mathbb{K}(1-\omega_i^{-m})+\mathbb{K}e_i^m\omega_i^{-m};$

$\qquad P_{\sigma,1}(\mathfrak{u}_{r,s})=\mathbb{K}(1-\sigma), \quad
\textrm{for}\
 \sigma\not\in\{\omega_i^{-1}, \omega'_i,\omega_i'^m, \omega_i^{-m}  \mid 1\le i\le n\}.$
\end{lemm}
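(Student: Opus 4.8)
The plan is to determine the first term $\mathfrak u_1$ of the coradical filtration of $\mathfrak u:=\mathfrak u_{r,s}(\mathfrak g)$ explicitly, and then to split it into the various skew-primitive spaces. Recall that $\mathfrak u$ carries the $Q$-grading with $\deg e_i=\alpha_i$, $\deg f_i=-\alpha_i$ and $\deg\omega_i^{\pm1}=\deg\omega_i'^{\pm1}=0$, that $\Delta$ is a graded algebra homomorphism, and that every group-like element --- in particular every $\sigma$ occurring in the statement --- is homogeneous of degree $0$. Hence any $x$ with $\Delta(x)=x\otimes1+\sigma\otimes x$ (resp.\ $\Delta(x)=x\otimes\sigma+1\otimes x$) is the sum of its $Q$-homogeneous components, each of which again lies in $P_{1,\sigma}(\mathfrak u)$ (resp.\ $P_{\sigma,1}(\mathfrak u)$); so it suffices to work one $Q$-degree $\beta$ at a time. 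Moreover $P_{g,h}(\mathfrak u)\subseteq\mathfrak u_1$ trivially, since $\Delta(x)=x\otimes g+h\otimes x\in\mathbb KG\otimes\mathfrak u+\mathfrak u\otimes\mathbb KG$ and $\mathfrak u_1=\Delta^{-1}\big(\mathbb KG\otimes\mathfrak u+\mathfrak u\otimes\mathbb KG\big)$ because $\mathfrak u$ is pointed with $\mathfrak u_0=\mathbb KG$. So the first task is to compute $\mathfrak u_1$, via the criterion that a $Q$-homogeneous $x$ of degree $\beta$ lies in $\mathfrak u_1$ if and only if $\Delta(x)$ has no component in a bidegree $(\gamma,\beta-\gamma)$ with $\gamma\notin\{0,\beta\}$.

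On the easy side, the single-variable computations
\[
\Delta(e_ig)=e_ig\otimes g+\omega_ig\otimes e_ig,\qquad
\Delta(e_i^m g)=e_i^m g\otimes g+\omega_i^m g\otimes e_i^m g\qquad(g\in G),
\]
the second from \eqref{e_i} because $[m]_i=0$ while $[j]_i\neq0$ for $0<j<m$ (cf.\ Remark~\ref{rmk refine}), together with their $f$-analogues from \eqref{f_i}, show that $\mathfrak u_1\supseteq\mathbb KG+\sum_{i=1}^n\big(\mathbb Ke_iG+\mathbb Kf_iG+\mathbb Ke_i^mG+\mathbb Kf_i^mG\big)$. Granting the reverse inclusion, each line of (i) is obtained by picking out, inside $\mathfrak u_1$, the elements $x$ with $\Delta(x)=x\otimes1+\sigma\otimes x$ for the prescribed $\sigma$: for instance $\Delta(e_ig)$ above equals $x\otimes1+\sigma\otimes x$ only for $g=1$, $\sigma=\omega_i$, while $\Delta(f_i^m g)=f_i^m g\otimes\omega_i'^m g+g\otimes f_i^m g$ forces $g=\omega_i'^{-m}$, $\sigma=\omega_i'^{-m}$ (the degree-$0$ summand $\mathbb K(1-\sigma)$ is the $\mathbb KG$-part, by a coefficient comparison in $\mathbb KG\otimes\mathbb KG$); here one also uses that $\omega_i$, $\omega_i'^{-1}$, $\omega_i^m$, $\omega_i'^{-m}$ are pairwise distinct nontrivial elements of $G$, which holds since $0<m<\ell$ by the known structure of $G$. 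Part (ii) then follows from (i) by applying the antipode: $S$ is a bijective anti-coalgebra map, so $S\big(P_{1,\tau}(\mathfrak u)\big)=P_{\tau^{-1},1}(\mathfrak u)$, and $S$ sends $1-\tau\mapsto1-\tau^{-1}$ and, up to nonzero scalars, $e_i\mapsto e_i\omega_i^{-1}$, $f_i\omega_i'^{-1}\mapsto f_i$, $e_i^m\mapsto e_i^m\omega_i^{-m}$, $f_i^m\omega_i'^{-m}\mapsto f_i^m$ --- exactly the dictionary between the lines of (i) and (ii).

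The \emph{main obstacle} is the reverse inclusion $\mathfrak u_1\subseteq\mathbb KG+\sum_i(\mathbb Ke_iG+\mathbb Kf_iG+\mathbb Ke_i^mG+\mathbb Kf_i^mG)$, i.e.\ that in each $Q$-degree the ``no mixed component'' criterion admits nothing beyond the listed elements. I would organise this in three parts. (a) Degree $\beta=0$: only the bidegrees $(\gamma,-\gamma)$, $\gamma\neq0$, need to be killed, and inspecting $\Delta$ on the weight-$0$ PBW--Lyndon monomials shows they can all cancel only if the element lies in $\mathbb KG$ --- the shortest case being $\Delta(c_1e_if_i+c_2f_ie_i)$, whose bad term $e_i\otimes f_i$ has coefficient $c_1+c_2$, forcing $e_if_i-f_ie_i\in\mathbb KG$. (b) Degree $\beta=k\alpha_i$: here \eqref{e_i} shows the surviving middle terms of $\Delta(e_i^k)$ are controlled by which Gauss binomials $\left[k\atop j\right]_i$ vanish, and these vanish for all $0<j<k$ exactly when $k\in\{1,m\}$ within the range $1\le k\le\ell-1$ cut off by $e_i^\ell=0$; one also checks that no longer weight-$k\alpha_i$ monomial (e.g.\ $e_i^{k+1}f_i$ times group-likes) survives, and symmetrically for $f_i$. (c) Degree $\beta$ genuinely involving two distinct simple roots: here nothing survives. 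For a monomial in two distinct root vectors the bidegrees produced by $\Delta$ are pairwise distinct and cannot cancel --- a short argument --- and the remaining, heavier point is that for a non-simple positive root $\alpha$ the root vector $e_\alpha$, and \emph{every} power $e_\alpha^k$, retains a genuinely mixed component in its coproduct. This is where I would invoke the PBW--Lyndon Basis Theorem and the recursive comultiplication formulas for the root vectors $e_\alpha,f_\alpha$ built from the good Lyndon words fixed in $(R7)$, together with the hypothesis $m\neq\ell$: since $e_\alpha^\ell=0$ but $e_\alpha^m\neq0$, the braiding data and the relevant quantum binomials stay nondegenerate exactly in the range that prevents any $e_\alpha^k$ from becoming primitive. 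Assembling (a)--(c) over all $\beta$ identifies $\mathfrak u_1$, hence all the $P_{1,\sigma}$ and $P_{\sigma,1}$; the type-by-type bookkeeping in (c), uniform over types $A,B,C,D,F_4,G_2$, is where the bulk of the work lies.
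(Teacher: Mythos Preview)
Your approach is correct and is essentially the one the paper has in mind: the paper's proof consists of a single sentence pointing to Remark~\ref{rmk refine} (which supplies the extra skew-primitives $e_i^m$, $f_i^m$ when $m<\ell$) and to Lemma~4.3 of \cite{Hu Wang  Pacific J. Math 2009}, whose argument is precisely the coradical-filtration/$Q$-grading analysis you outline. Your step-by-step plan (reduce to $Q$-homogeneous pieces, identify $\mathfrak u_1$ via the PBW--Lyndon basis, then read off each $P_{1,\sigma}$ and pass to $P_{\sigma,1}$ by the antipode) is exactly how that referenced lemma proceeds, so there is nothing to add.
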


\begin{proof}
  According to Remark \ref{rmk refine}, we can prove this Lemma similarly to the proof of Lemma 4.3 in \cite{Hu Wang  Pacific J. Math 2009}.
\end{proof}

Therefore, we can generalize the original isomorphism reduction theorems (Theorems \ref{thm isoA}--\ref{thm isoG2})
under the assumption that requires both $rs^{-1}$ and $r^\prime s^{\prime -1}$ to be  primitive $m$th (rather than $\ell$th) roots of unity. Certainly, $m\,|\,\ell$ in our context.

\begin{theorem} \label{thm refineA} $($Refined Isomorphism Reduction Theorem$)$
Assume $\emph{lcm}(|r|, |s|)$ $=\emph{lcm}(|r’|, |s'|)=\ell$, where $|r|$ denotes the order of $r$.
Under the assumption that both $rs^{-1}$ and $r^\prime s^{\prime -1}$ are primitive $m$th roots of unity,
the original isomorphism reduction theorems still hold.
\end{theorem}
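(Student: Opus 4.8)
I would prove Theorem~\ref{thm refineA} by locating the single place where the original proofs of Theorems~\ref{thm isoA}--\ref{thm isoG2} invoke the hypothesis that $rs^{-1}$ is a \emph{primitive $\ell$th} root of unity, and checking that this place survives the weaker hypothesis. (If $m=\ell$ there is nothing to prove, so assume $2\le m<\ell$ with $m\mid\ell$.) In those proofs the hypothesis is used for exactly one purpose: it guarantees that, modulo the group algebra $\mathbb K G$, the $e_i$ and $f_i$ are the \emph{only} skew-primitive elements of $\mathfrak u_{r,s}(\mathfrak g)$, so that any Hopf isomorphism is forced to carry each $e_i$ to a scalar multiple of some $\widetilde e_j$ or $\widetilde f_j\widetilde\omega_j'^{-1}$ (and dually for each $f_i$) --- the starting point of the whole case analysis. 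When $rs^{-1}$ is only a primitive $m$th root of unity, formulas \eqref{e_i}--\eqref{f_i} create genuinely new skew-primitives $e_i^{m}$ and $f_i^{m}$ (up to grouplike factors); the plan is to show that these cannot interfere, after which the rest of the argument is untouched.

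First I would fix a Hopf algebra isomorphism $\varphi\colon\mathfrak u_{r,s}(\mathfrak g)\to\mathfrak u_{r',s'}(\mathfrak g)$. Both algebras are pointed, and by $(R1)$ their grouplike groups are the same abelian group --- the one determined by $\ell=\mathrm{lcm}(|r|,|s|)$ and the type --- regardless of the order of $rs^{-1}$; hence $\varphi$ restricts to a group isomorphism between them, so in particular $\varphi(\omega_i)$ and $\varphi(\omega_i')$ are grouplike. Since $\varphi$ is a bialgebra map, $\varphi(e_i)\in P_{1,\varphi(\omega_i)}(\mathfrak u_{r',s'})$ and $\varphi(f_i)\in P_{\varphi(\omega_i'),1}(\mathfrak u_{r',s'})$. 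By Lemma~\ref{lemm skewp} applied to $\mathfrak u_{r',s'}$: for a grouplike $\sigma$ the space $P_{1,\sigma}(\mathfrak u_{r',s'})$ equals $\mathbb K(1-\sigma)$ unless $\sigma\in\{\widetilde\omega_j,\,\widetilde\omega_j'^{-1},\,\widetilde\omega_j^{m},\,\widetilde\omega_j'^{-m}\mid j\}$, in which cases it is two-dimensional, spanned by $1-\sigma$ together with one of $\widetilde e_j$, $\widetilde f_j\widetilde\omega_j'^{-1}$, $\widetilde e_j^{m}$, $\widetilde f_j^{m}\widetilde\omega_j'^{-m}$; symmetrically for $P_{\sigma,1}$.

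The crucial step is then to discard the ``new'' $m$th-power companions and the $\mathbb K(1-\sigma)$-summand. For this I would use that $e_i$ is nilpotent of index \emph{exactly} $\ell$ --- by the PBW--Lyndon basis theorem $e_i^{\ell-1}\neq 0$, while $e_i^{\ell}=0$ by $(R7)$ --- so $\varphi(e_i)$ is nilpotent of index $\ell$ as well. Put $\sigma=\varphi(\omega_i)$ and write $\varphi(e_i)=a(1-\sigma)+v$, where $v$ is a scalar multiple of the companion basis vector (with $v=0$ in the degenerate case that $\sigma$ is not among the distinguished grouplikes above). Evaluating against the one-dimensional representation of the subalgebra generated by $\sigma$ and $v$ that sends $v\mapsto 0$ and $\sigma$ to a nontrivial root of unity $\zeta$ (of order dividing that of $\sigma$; possible since $\sigma\neq 1$) forces $a(1-\zeta)$ to be nilpotent in $\mathbb K$, hence $a=0$ and $\varphi(e_i)=v$; in particular $v\neq 0$ as $\varphi$ is injective, so $\sigma$ is distinguished. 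If $v$ were a nonzero multiple of $\widetilde e_j^{m}$ or $\widetilde f_j^{m}\widetilde\omega_j'^{-m}$, its nilpotency index would be $\lceil\ell/m\rceil<\ell$ (recall $m\ge 2$), contradicting the preceding sentence. Hence $\varphi(e_i)$ is a scalar multiple of some $\widetilde e_j$ or $\widetilde f_j\widetilde\omega_j'^{-1}$, and dually $\varphi(f_i)$ of some $\widetilde f_j$ or $\widetilde e_j\widetilde\omega_j'^{-1}$ --- precisely the configuration from which the $\ell$th-root proofs proceed.

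From here on the argument is the same as in the $\ell$th-root case: the induced index permutation is a Dynkin diagram automorphism by the Serre relations $(R5)$--$(R6)$; which of the two alternatives occurs, the admissible relation between $(r',s')$ and $(r,s)$, and the scalars $a_i$ are read off from $(R1)$--$(R4)$; and one checks as before that every map listed in Theorems~\ref{thm isoA}--\ref{thm isoG2} is indeed a Hopf isomorphism. These steps use only the presentation of $\mathfrak u_{r,s}(\mathfrak g)$ by $(R1)$--$(R7)$, not the particular order of $rs^{-1}$, so the computations of \cite{Hu  Wang  JGP 2010} for type $A$, and their analogues for types $B,C,D,F_4,G_2$, go through with only notational changes. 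The one genuinely new ingredient is the nilpotency-index step above, and I expect the main --- if modest --- obstacle to be securing its inputs: that $e_i$ has nilpotency index exactly $\ell$ (which is where the PBW--Lyndon basis enters), and that Lemma~\ref{lemm skewp} truly exhausts the skew-primitive elements of $\mathfrak u_{r',s'}(\mathfrak g)$, so that no skew-primitive of nilpotency index larger than $\lceil\ell/m\rceil$, other than the $\widetilde e_j$- and $\widetilde f_j$-type ones, can arise.
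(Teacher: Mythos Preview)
Your proposal is correct, but it takes a more elaborate route than the paper's own proof. The paper rules out the ``new'' possibilities $\varphi(\omega_i)\in\{\widetilde\omega_j^{m},\widetilde\omega_j'^{-m}\}$ by a one-line order argument on grouplikes: since $m\mid\ell$ and $m>1$, the element $\widetilde\omega_j^{m}$ has order $\ell/m<\ell$, whereas $\omega_i$ has order exactly $\ell$; a Hopf isomorphism preserves the order of grouplikes, so these cases are impossible. After that, $\varphi(\omega_i)\in\{\widetilde\omega_j,\widetilde\omega_j'^{-1}\}$ and $\varphi(e_i)=a(1-\sigma)+b\,(\text{degree-one primitive})$, which is precisely the starting point of the original proofs of Theorems~\ref{thm isoA}--\ref{thm isoG2}; the paper simply invokes those arguments verbatim (in particular, the vanishing of the coefficient $a$ is handled there, via the commutation relations $(R2)$--$(R3)$, not separately).

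By contrast, you first kill the $a(1-\sigma)$ term with a character/nilpotency trick and then exclude the $m$th-power companions by comparing nilpotency indices (index $\ell$ for $e_i$ versus $\ell/m$ for $\widetilde e_j^{m}$). This is valid and self-contained, and it makes explicit the role of the PBW--Lyndon basis in ensuring $e_i^{\ell-1}\neq 0$; but it is heavier than needed. The paper's order-of-grouplike argument achieves the same exclusion without touching nilpotency or representations at all, and then piggybacks entirely on the existing case analysis. (A small slip: in your dual statement for $f_i$, the companion should be $\widetilde e_j\widetilde\omega_j^{-1}$, not $\widetilde e_j\widetilde\omega_j'^{-1}$; cf.\ Lemma~\ref{lemm skewp}(ii).)
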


\begin{proof}
  Assume there exists an isomorphism $\varphi: \mathfrak{u}_{r, s}(\mathfrak{g})\cong \mathfrak{u}_{r^\prime, s^\prime}(\mathfrak{g})$,
  and $\text{rank}(\mathfrak{g})\\=n$.
  We denote the generators of $\mathfrak{u}_{r',s'}(\mathfrak{g})$ by $\tilde{e}_i$, $\tilde{f}_i,\, \tilde\omega_i$, $\tilde\omega'_i, 1\le i\le n$.
Since $\varphi$ is a Hopf algebra isomorphism, we have
$$\Delta(\varphi(e_i))=(\varphi\otimes\varphi)
(\Delta(e_i))=\varphi(e_i)\otimes 1+\varphi(\omega_i)\otimes
\varphi(e_i),
$$
$\varphi(e_i) \in P_{1,\varphi(\omega_i)}(\mathfrak{u}_{r',s'})$,
and
$\varphi(\omega_i)\in\mathbb{K}\tilde{G}$.

We conclude that $\varphi(\omega_i)\in\{\,\tilde\omega_j, \tilde\omega_j'^{-1}, \tilde\omega_j^m, \tilde\omega_j'^{-m}  \mid
1\le j\le n\,\}$.
However, the latter two cases are impossible because $\varphi$ must preserve the order of $\omega_i$ and obviously $\text{gcd}(m, \ell)=m\neq 1$.
We thus have either $\varphi(\omega_i)=\tilde\omega_j$,
$\varphi(e_i)=a(1-\tilde\omega_j)+b\tilde e_j$,
or $\varphi(\omega_i)=\tilde\omega_j'^{-1}$,
$\varphi(e_j)=a(1-\tilde\omega_j'^{-1})+b\tilde f_j\tilde\omega_j'^{-1}$ for some $1\le j\le n$ and $a,b\in\mathbb K$.
Then we can complete the proof similarly as those of Theorems \ref{thm isoA}--\ref{thm isoG2}.
\end{proof}

\begin{coro}\label{cor isoA}
  Assume that $\emph{lcm}(|r|, |s|)=\emph{lcm}(|r’|, |s'|)=\ell$,
  $rs^{-1}$ is a primitive $\ell$th root of unity,
  and $r's'^{-1}$ is a primitive $m$th root of unity with $m<\ell$.
  Then it necessarily holds that
  $$\mathfrak{u}_{r, s}(\mathfrak{g}) \ncong \mathfrak{u}_{r', s'}(\mathfrak{g}).$$
\end{coro}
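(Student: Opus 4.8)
The plan is to argue by contradiction and extract the order of $rs^{-1}$ as a Hopf-algebraic invariant. Suppose there were an isomorphism $\varphi:\mathfrak{u}_{r,s}(\mathfrak g)\cong\mathfrak{u}_{r',s'}(\mathfrak g)$. Since $\varphi$ is a Hopf algebra isomorphism, it must carry the coradical filtration of the source onto that of the target; in particular it restricts to a bijection on group-like elements $\varphi(G)=\tilde G$ and, for each pair of group-likes, an isomorphism of the spaces of skew-primitive elements $P_{g,h}$ onto $P_{\varphi(g),\varphi(h)}$. This is precisely the setup already used in the proof of Theorem~\ref{thm refineA}.

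The key step is a counting argument on the dimensions of the skew-primitive spaces. By Lemma~\ref{lemm skewp} applied to $\mathfrak{u}_{r',s'}(\mathfrak g)$ (where $r's'^{-1}$ has order $m<\ell$, so $m\neq\ell=\mathrm{lcm}(|r'|,|s'|)$, and the lemma applies), the group-like elements $\sigma\in\tilde G$ for which $\dim P_{1,\sigma}(\mathfrak{u}_{r',s'})>1$ (equivalently $=2$) are exactly those in $\{\tilde\omega_i,\,\tilde\omega_i'^{-1},\,\tilde\omega_i^m,\,\tilde\omega_i'^{-m}\mid 1\le i\le n\}$. On the source side, however, $rs^{-1}$ has order exactly $\ell$, so $m\neq\ell$ fails to be a proper divisor in the relevant sense — more precisely, in $\mathfrak{u}_{r,s}(\mathfrak g)$ the powers $\omega_i^\ell=\omega_i'^{\,\ell}=1$ are trivial by $(R1)$, so the "extra" skew-primitive directions coming from $e_i^{\,\ell}$, $f_i^{\,\ell}$ vanish, and the list of $\sigma$ with $\dim P_{1,\sigma}(\mathfrak{u}_{r,s})=2$ is just $\{\omega_i,\,\omega_i'^{-1}\mid 1\le i\le n\}$. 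I would first check that this second list has exactly $2n$ elements (the $\omega_i$ and $\omega_i'^{-1}$ are distinct, as $\omega_i^\ell=1$ forces $\omega_i\neq\omega_j'^{-1}$ unless a spurious relation holds — one verifies the group $G\cong(\z/\ell)^{2n}$ is free of the needed rank). Meanwhile on the target side, because $m<\ell$, the elements $\tilde\omega_i^m$ and $\tilde\omega_i'^{-m}$ are \emph{nontrivial} group-likes (of order $\ell/m>1$), and one checks they are not already among the $\tilde\omega_j,\tilde\omega_j'^{-1}$; hence the target has strictly more than $2n$ group-like elements $\sigma$ with $\dim P_{1,\sigma}=2$.

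Since $\varphi$ induces a bijection between these two sets — $\varphi$ sends $\{\sigma\in G:\dim P_{1,\sigma}(\mathfrak{u}_{r,s})=2\}$ bijectively onto $\{\tau\in\tilde G:\dim P_{1,\tau}(\mathfrak{u}_{r',s'})=2\}$ because $\varphi$ preserves $\Delta$ and hence the dimension of each $P_{1,\bullet}$ — comparing cardinalities ($2n$ versus something strictly larger) yields the contradiction. I would spell out the cardinality computation carefully: on the target, $\#\{\tilde\omega_i,\tilde\omega_i'^{-1}\}=2n$, and adjoining the nontrivial, distinct elements $\tilde\omega_i^m,\tilde\omega_i'^{-m}$ strictly increases the count (one must confirm these $2n$ new elements are not equal to any of the original $2n$, which follows from the freeness of $\tilde G\cong(\z/\ell)^{2n}$ together with $1<m<\ell$). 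Therefore no such $\varphi$ exists, proving $\mathfrak{u}_{r,s}(\mathfrak g)\ncong\mathfrak{u}_{r',s'}(\mathfrak g)$.

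The main obstacle I anticipate is the bookkeeping around the group $G$ of group-likes: one needs that $G$ and $\tilde G$ are both free abelian of rank $2n$ over $\z/\ell$ and that the various elements $\omega_i$, $\omega_i'^{-1}$ (resp. their $m$-th powers on the target) are genuinely distinct, with no accidental coincidences for small $\ell$ or special $\mathfrak g$. This is where the hypothesis $\mathrm{lcm}(|r|,|s|)=\mathrm{lcm}(|r'|,|s'|)=\ell$ does real work — it pins down $|G|=|\tilde G|=\ell^{2n}$ so that the dimension of $\mathfrak{u}_{r,s}(\mathfrak g)$ matches (the "weak invariant" remark in the introduction), forcing the contradiction to come purely from the finer skew-primitive count governed by $|rs^{-1}|$ versus $|r's'^{-1}|$. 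A secondary, more routine point is verifying Lemma~\ref{lemm skewp} genuinely applies on the target side, i.e. that $m\neq\ell$ there — which is immediate from $m<\ell$.
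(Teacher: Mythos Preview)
Your proposal is correct and follows essentially the same route as the paper's own proof: both argue by contradiction using the fact that a Hopf algebra isomorphism must match skew-primitive spaces, and both exploit that on the target side (where $|r's'^{-1}|=m<\ell$) the elements $\tilde e_i^{\,m},\tilde f_i^{\,m}$ furnish \emph{additional} skew-primitive directions (Lemma~\ref{lemm skewp}) which have no counterpart on the source side (where $|rs^{-1}|=\ell$, so $e_i^{\,\ell}=f_i^{\,\ell}=0$ and Remark~\ref{rmk refine} applies). The paper compresses this to a single sentence, whereas you spell out the explicit cardinality comparison $\#\{\sigma:\dim P_{1,\sigma}=2\}$; your added care about the freeness of $G\cong(\z/\ell)^{2n}$ to rule out coincidences among $\tilde\omega_i,\tilde\omega_i'^{-1},\tilde\omega_i^{\,m},\tilde\omega_i'^{-m}$ is correct and is exactly what justifies the word ``additional'' in the paper's one-line argument.
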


\begin{proof}
  If there is such an isomorphism $\varphi$, then $\varphi$ must map skew-primitive elements of $\mathfrak{u}_{r, s}(\mathfrak{g})$ to those of $\mathfrak{u}_{r', s'}(\mathfrak{g})$, and vice versa.
  This obviously yields a contradiction, since $\mathfrak{u}_{r', s'}(\mathfrak{g})$ has additional skew-primitive elements such as $e_i^m$, $f_i^m$.
\end{proof}

\begin{coro}\label{cor2 isoA}
Assume that $\emph{lcm}(|r|, |s|)=\emph{lcm}(|r’|, |s'|)=\ell$,
and that $rs^{-1}$ is a primitive $m$th root of unity,
while $r^\prime s^{\prime -1}$ is a  primitive $m'$th root of unity such that  $1<m\neq  m'\leq \ell $.
Then it necessarily holds that
 $$\mathfrak{u}_{r, s}(\mathfrak{g}) \ncong \mathfrak{u}_{r', s'}(\mathfrak{g}).$$
That is to say, the order of $rs^{-1}$ is an invariant when investigating the isoclasses of $\mathfrak{u}_{r, s}(\mathfrak{g})$.
\end{coro}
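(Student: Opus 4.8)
The plan is to argue by contradiction, using Lemma~\ref{lemm skewp} to attach to the coradical an order-theoretic invariant that separates $m$ from $m'$. So suppose there were a Hopf algebra isomorphism $\varphi\colon\mathfrak{u}_{r,s}(\mathfrak g)\xrightarrow{\ \sim\ }\mathfrak{u}_{r',s'}(\mathfrak g)$. First I would dispose of the boundary situation in which one of $m,m'$ equals $\ell$: since $m\neq m'$, at most one of them is $\ell$, say (after possibly exchanging the roles of $(r,s)$ and $(r',s')$) that $m=\ell>m'$; then $rs^{-1}$ is a primitive $\ell$th root of unity and $r's'^{-1}$ is a primitive $m'$th root of unity with $m'<\ell$, so Corollary~\ref{cor isoA} already rules out such a $\varphi$. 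Thus the remaining, substantive case is $1<m,m'<\ell$, where Lemma~\ref{lemm skewp} applies verbatim on both sides, with parameter $m$ for the source and $m'$ for the target.

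In that case I would package Lemma~\ref{lemm skewp}(i) as follows. For each of the two algebras write $\mathcal G(\,\cdot\,)$ for the set of those group-like elements $\sigma$ for which $P_{1,\sigma}\supsetneq\mathbb{K}(1-\sigma)$, i.e.\ which carry a genuinely non-trivial skew-primitive. Lemma~\ref{lemm skewp}(i) identifies these sets explicitly:
$$\mathcal G(\mathfrak{u}_{r,s})=\{\,\omega_i,\ \omega_i'^{-1},\ \omega_i^{m},\ \omega_i'^{-m}\mid 1\le i\le n\,\},$$
$$\mathcal G(\mathfrak{u}_{r',s'})=\{\,\tilde\omega_j,\ \tilde\omega_j'^{-1},\ \tilde\omega_j^{m'},\ \tilde\omega_j'^{-m'}\mid 1\le j\le n\,\}.$$
Any Hopf algebra isomorphism carries group-likes to group-likes and $(1,g)$-skew-primitives to $(1,\varphi(g))$-skew-primitives, and the same holds for $\varphi^{-1}$; hence $\varphi$ restricts to a bijection $\mathcal G(\mathfrak{u}_{r,s})\to\mathcal G(\mathfrak{u}_{r',s'})$, and since $\varphi$ is a group isomorphism on group-likes this bijection preserves the order of each element. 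In particular the two sets of orders $\{\,|\sigma|:\sigma\in\mathcal G(\mathfrak{u}_{r,s})\,\}$ and $\{\,|\sigma|:\sigma\in\mathcal G(\mathfrak{u}_{r',s'})\,\}$ must be equal.

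The contradiction then comes from computing these order sets. Since each of $\omega_i,\omega_i'$ has order exactly $\ell$ and $m\mid\ell$, the elements $\omega_i^{m}$ and $\omega_i'^{-m}$ have order $\ell/m$, while $\omega_i$ and $\omega_i'^{-1}$ have order $\ell$; as $1<m<\ell$ this gives $\{\,|\sigma|:\sigma\in\mathcal G(\mathfrak{u}_{r,s})\,\}=\{\ell,\ \ell/m\}$, a two-element set, and likewise $\{\,|\sigma|:\sigma\in\mathcal G(\mathfrak{u}_{r',s'})\,\}=\{\ell,\ \ell/m'\}$. But $m\neq m'$ and $m,m'>1$ force $\ell/m\neq\ell/m'$, with neither equal to $\ell$, so the two sets are distinct — contradicting the preceding paragraph. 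Hence no such $\varphi$ exists; and since $m$ is the order of $rs^{-1}$, this says precisely that $|rs^{-1}|$ is an isomorphism invariant of $\mathfrak{u}_{r,s}(\mathfrak g)$.

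I do not expect a genuinely hard step here: the argument reduces to Lemma~\ref{lemm skewp} plus routine Hopf-algebra bookkeeping. The few points that do require care are: the boundary reduction to Corollary~\ref{cor isoA} when $m$ or $m'$ equals $\ell$ (where the ``extra'' skew-primitives $e_i^m,f_i^m$ degenerate and Lemma~\ref{lemm skewp} in its present form no longer applies); the fact that each $\omega_i$ has order exactly $\ell$ rather than a proper divisor, which is what makes $\omega_i^m$ have order $\ell/m$ and is standard from relation $(R1)$ together with the PBW-type structure of $\mathfrak{u}_{r,s}(\mathfrak g)$; and keeping careful track that $\varphi$ maps the distinguished subset $\mathcal G$ — not merely the full group of group-likes — onto its counterpart, which is exactly what forces the order sets to agree.
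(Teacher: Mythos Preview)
Your proposal is correct and follows essentially the same approach as the paper: reduce the boundary case $m=\ell$ (or $m'=\ell$) to Corollary~\ref{cor isoA}, then in the remaining case invoke Lemma~\ref{lemm skewp} and compare orders of the group-likes supporting nontrivial skew-primitives. The paper phrases this last step as showing that $\varphi(\omega_i^m)$, of order $\ell/m$, cannot match any of $\tilde\omega_j,\tilde\omega_j'^{-1},\tilde\omega_j^{m'},\tilde\omega_j'^{-m'}$ (of orders $\ell$ or $\ell/m'$); your order-set invariant $\{\ell,\ell/m\}$ is simply a tidier repackaging of the same comparison.
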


\begin{proof}
   According to Corollary \ref{cor isoA}, we may assume $m< \ell$.
   If there exists such an isomorphism $\varphi$, then
    $\varphi(e_i^m)\in P_{1, \varphi(\omega_i^m)}$.
    By Lemma \ref{lemm skewp}, $\varphi(\omega_i^m)\in \{\tilde\omega_i, \tilde\omega_i'^{-1}, \tilde\omega_i^{m'},  \tilde\omega_i'^{-m'}\}$.
   By comparing the orders of both sides, we conclude that this is  impossible.
\end{proof}

\noindent{\it 2.3. \;}
The dimension of $\mathfrak{u}_{r, s}(\mathfrak{g})$ can be calculated.
For further use, we list the dimension of $\mathfrak{u}_{r, s}(\mathfrak{sl}_n)$ here.

\begin{prop} $($\cite{Benkart Witherspoon Fields Institute Communications 2004}\label{prop dimension of algebra}$)$
  Assume $r$ is a primitive $d$th root of unity,
$s$ is a primitive $d'$th root of unity,
and $\ell$ is the least common multiple of $d$ and $d'$.
Then $\mathfrak{u}_{r, s}(\mathfrak{sl}_n)$ is an algebra of dimension $\ell^{(n+2)(n-1)}$
equipped with a monomial convex PBW Lyndon basis. The basis elements are of the form
$$
\mathcal{E}_{i_1, j_1}^{a_1}\cdots \mathcal{E}_{i_p, j_p}^{a_p}\omega_1^{b_1}\cdots \omega_{n-1}^{b_{n-1}}
(\omega'_1)^{b'_1}\cdots (\omega'_{n-1})^{b'_{n-1}}
\mathcal{F}_{i'_1, j'_1}^{a_1'}\cdots \mathcal{F}_{i_p', j_p'}^{a_p'},
$$
where $(i_1, j_1)<\cdots<(i_p, j_p)$ and $(i_1', j_1')<\cdots< (i_p', j_p')$ lexicographically,
and all powers range between $0$ and $\ell-1$.
\end{prop}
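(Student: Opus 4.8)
The plan is to produce a triangular decomposition $\mathfrak{u}_{r,s}(\mathfrak{sl}_n)=\mathfrak{u}^{-}\cdot\mathfrak{u}^{0}\cdot\mathfrak{u}^{+}$, to exhibit the stated convex PBW--Lyndon basis on each of the two ``nilpotent'' parts $\mathfrak{u}^{\pm}$ (indexed by the $\binom{n}{2}$ positive roots of $A_{n-1}$) and on the torus $\mathfrak{u}^{0}$, and then to multiply the three cardinalities. The bulk of the work is the positive part, obtained from Lyndon-word combinatorics.

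First I would set up the good Lyndon words for the Cartan matrix of $A_{n-1}$, following Kharchenko's PBW theorem for character Hopf algebras (equivalently the constructions of Rosso, Leclerc and Lalonde--Ram for the positive part). For type $A_{n-1}$ the good Lyndon words on the alphabet $\{1,\dots,n-1\}$ are exactly the intervals $i\,(i{+}1)\cdots(j{-}1)$ with $1\le i<j\le n$, in bijection with the positive roots $\alpha_i+\cdots+\alpha_{j-1}$; an iterated skew-bracket attaches to each such word a root vector, namely the element $\mathcal{E}_{i,j}$ of the statement (and $\mathcal{F}_{i,j}$ on the negative side by the symmetric construction). One checks that these root vectors obey convex Levendorskii--Soibelman-type commutation relations whose structure constants are monomials in $r,s$ and the Euler form, so that the quantum Serre relations $(R5)$, $(R6)$ become automatic. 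Kharchenko's theorem then yields the ordered monomials $\prod_k\mathcal{E}_{i_k,j_k}^{a_k}$, with $(i_k,j_k)$ increasing in the fixed convex order and all exponents unrestricted, as a basis of the positive subalgebra $U^{+}_{r,s}(\mathfrak{sl}_n)$ of the \emph{non-restricted} two-parameter quantum group $U_{r,s}(\mathfrak{sl}_n)$, and the whole of $U_{r,s}(\mathfrak{sl}_n)$ carries the corresponding triangular PBW basis.

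Next I would pass to the restricted algebra via the Hopf projection $\pi\colon U_{r,s}(\mathfrak{sl}_n)\twoheadrightarrow\mathfrak{u}_{r,s}(\mathfrak{sl}_n)$ whose kernel is the (Hopf) ideal generated by $e_\alpha^{\ell},\ f_\alpha^{\ell},\ \omega_i^{\ell}-1,\ \omega_i'^{\ell}-1$. Relations $(R1)$ and $(R7)$ truncate every root-vector exponent and every $\omega/\omega'$ exponent to the range $\{0,\dots,\ell-1\}$, and a straightening/Diamond-Lemma argument shows that the displayed monomials at least \emph{span} $\mathfrak{u}_{r,s}(\mathfrak{sl}_n)$. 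The \textbf{main obstacle is linear independence}: one must prove that $\ker\pi$, intersected with the PBW span of $U_{r,s}(\mathfrak{sl}_n)$, is \emph{exactly} the span of those monomials having some root-vector exponent $\ge\ell$ or some torus exponent $\ge\ell$ — equivalently, that the power relations $e_\alpha^{\ell}=f_\alpha^{\ell}=0$ are mutually compatible and compatible with the convex commutation relations. I would settle this representation-theoretically, exhibiting enough finite-dimensional $\mathfrak{u}_{r,s}(\mathfrak{sl}_n)$-modules on which the surviving monomials act independently: on a direct sum of baby Verma modules $M(\lambda)=\mathfrak{u}_{r,s}\otimes_{\mathfrak{u}^{\ge 0}}\mathbb{K}_{\lambda}$, with $\lambda$ ranging over the characters of $G=\langle\omega_i,\omega_i'\rangle$, the $\mathfrak{u}^{-}$-factor acts freely of rank one and is thereby detected, the $G$-grading detects the $\mathfrak{u}^{0}$-factor, and a contragredient-duality argument detects the $\mathfrak{u}^{+}$-factor; in low rank one may alternatively transport a known basis through the Hopf $2$-cocycle / Drinfeld-double identifications. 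This is the only step where the root-of-unity hypothesis and the triangular decomposition are used in an essential way.

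Finally, the count. The root system $A_{n-1}$ has $\binom{n}{2}=\tfrac{n(n-1)}{2}$ positive roots, so $\dim\mathfrak{u}^{+}=\dim\mathfrak{u}^{-}=\ell^{\,n(n-1)/2}$; and $(R1)$ identifies $\mathfrak{u}^{0}$ with $\mathbb{K}[\omega_1^{\pm},\dots,\omega_{n-1}^{\pm},\omega_1'^{\pm},\dots,\omega_{n-1}'^{\pm}]\big/(\omega_i^{\ell}-1,\ \omega_i'^{\ell}-1)$, of dimension $\ell^{2(n-1)}$. Therefore
$$
\dim\mathfrak{u}_{r,s}(\mathfrak{sl}_n)=\ell^{\,n(n-1)/2}\cdot\ell^{\,2(n-1)}\cdot\ell^{\,n(n-1)/2}=\ell^{\,n(n-1)+2(n-1)}=\ell^{(n+2)(n-1)},
$$
and the displayed ordered monomials constitute the asserted convex PBW--Lyndon basis.
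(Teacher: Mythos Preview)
The paper does not supply its own proof of this proposition; it is simply quoted from \cite{Benkart Witherspoon Fields Institute Communications 2004}. Your outline is a correct way to establish the result, and the final dimension count is right.

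That said, it is worth noting how your route differs from the one in the cited source. Benkart--Witherspoon do not invoke Kharchenko's theorem or the Lyndon-word machinery in any essential way: for type $A_{n-1}$ they define the root vectors $\mathcal{E}_{i,j}$ and $\mathcal{F}_{i,j}$ directly as iterated $q$-commutators $[e_i,[e_{i+1},[\dots,e_{j-1}]_{r,s}\dots]_{r,s}]_{r,s}$, verify the convex straightening relations by hand (induction on $j-i$), and then run a Bergman Diamond-Lemma / filtered-degeneration argument to obtain the PBW basis in the generic algebra; linear independence in the restricted quotient is handled, much as you propose, by letting the algebra act on suitable Verma-type modules. Your appeal to Kharchenko's general PBW theorem for character Hopf algebras is a legitimate and more conceptual substitute for the first half of that argument---it buys you the convex commutation relations and the generic PBW basis for free, at the cost of importing a heavier external result---while the second half (passage to the restricted quotient and the module-theoretic independence check) is essentially the same in both approaches.
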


\noindent{\it 2.4. \;}
Radford gave a description of all simple Yetter-Drinfeld modules for certain graded Hopf algebras:
\begin{lemm} $($\cite{Radford construction J Algebra 2003}$)$
  Let $H$ be a bialgebra over the field $\Bbbk$ and suppose that $H^{op}$ is a Hopf algebra with antipode $S^ {op}$.
  Let $\beta\in Alg_{\Bbbk}(H, \Bbbk)$,
   then $H_\beta=(H, \bullet_\beta, \Delta)\in{ _H \mathcal{Y}\mathcal{D}^H}$, where
   $$x\bullet_\beta a=\sum \beta(x_{(2)}) x_{(3)}a S^{op}(x_{(1)}),$$
   for all $x,a \in H$.

\end{lemm}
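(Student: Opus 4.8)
The plan is to verify directly the three conditions defining an object of ${}_{H}\mathcal{YD}^{H}$: that $\bullet_\beta$ makes $H$ a unital left $H$-module, that $\Delta$ makes $H$ an $H$-comodule, and that the action and coaction are Yetter--Drinfeld compatible. The comodule condition requires nothing: since the coaction is the comultiplication $\Delta$, coassociativity and counitality of the coalgebra $H$ \emph{are} the comodule axioms. For the module condition I would first read off $1\bullet_\beta a = a$ from $\Delta(1)=1\otimes 1$, $\beta(1)=1$ and $S^{op}(1)=1$; then, for $(xy)\bullet_\beta a = x\bullet_\beta(y\bullet_\beta a)$, expand both sides in Sweedler notation and compare, using that the iterated comultiplication is an algebra map (so $(xy)_{(i)}=x_{(i)}y_{(i)}$ in the appropriate sense), that $\beta$ is multiplicative, and that $S^{op}$—being the antipode of the Hopf algebra $H^{op}$—is an \emph{anti}-algebra map of $H$. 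Both sides collapse to $\sum\beta(x_{(2)})\beta(y_{(2)})\,x_{(3)}y_{(3)}\,a\,S^{op}(y_{(1)})S^{op}(x_{(1)})$.

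The substantive step is the Yetter--Drinfeld compatibility. For the coaction $\Delta$ this says that, for all $h,a\in H$,
$$\sum h_{(1)}\bullet_\beta a_{(1)}\,\otimes\,h_{(2)}a_{(2)}\ =\ \sum(h_{(2)}\bullet_\beta a)_{(1)}\,\otimes\,(h_{(2)}\bullet_\beta a)_{(2)}\,h_{(1)}.$$
On the left I would split the first leg of $\Delta(h)$ two more times, obtaining $\sum\beta(h_{(2)})\,h_{(3)}\,a_{(1)}\,S^{op}(h_{(1)})\otimes h_{(4)}a_{(2)}$. On the right I would apply $\Delta$ to $h_{(2)}\bullet_\beta a$; since $\Delta$ is multiplicative and, $S^{op}$ being the antipode of $H^{op}$, one has $\Delta(S^{op}(x))=\sum S^{op}(x_{(2)})\otimes S^{op}(x_{(1)})$, this yields a sum whose second tensor leg ends in a factor $S^{op}(g_{(2)})\,g_{(1)}$, where $g_{(1)}\otimes g_{(2)}$ is the coproduct of the outermost piece of an iterated $\Delta(h)$. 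I would then invoke the defining identity of $S^{op}$ in the form $\sum S^{op}(g_{(2)})\,g_{(1)}=\varepsilon(g)1$ (the antipode axiom for $H^{op}$, rewritten inside $H$) to remove that factor, and absorb the resulting $\varepsilon(g)$ into the remaining legs by counitality. What is left is precisely $\sum\beta(h_{(2)})\,h_{(3)}\,a_{(1)}\,S^{op}(h_{(1)})\otimes h_{(4)}a_{(2)}$, so the two sides coincide.

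The only real difficulty is the Sweedler bookkeeping in this last step: comultiplying $h_{(2)}\bullet_\beta a$ already brings in a threefold coproduct of $h$, and after applying $\Delta$ one is effectively juggling a sixfold coproduct of $h$, so care is needed to track which leg is evaluated by $\beta$, which legs carry $S^{op}$, and on which side each is multiplied—above all to be sure that the pair against which the skew-antipode axiom is used really is a consecutive $g_{(1)},g_{(2)}$ arising from a single $\Delta$, so that the subsequent counit collapse is legitimate. Everything else is formal manipulation. (The statement is Radford's; the above is merely the verification behind the cited result.)
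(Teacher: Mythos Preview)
The paper does not give its own proof of this lemma; it is simply quoted from Radford \cite{Radford construction J Algebra 2003} and used as a black box, so there is no argument in the paper to compare against. Your direct verification of the module, comodule, and Yetter--Drinfeld compatibility axioms is exactly the proof Radford gives, and your outline is correct: the only nontrivial point is the compatibility, and that does reduce, after expanding $\Delta(h_{(2)}\bullet_\beta a)$ and using $\Delta\circ S^{op}=(S^{op}\otimes S^{op})\circ\Delta^{op}$, to an application of the $H^{op}$-antipode identity $\sum S^{op}(g_{(2)})g_{(1)}=\varepsilon(g)1$ followed by a counit collapse. Your caution about the Sweedler indexing is well placed---when you write it out in full, label the six legs of $\Delta^{(5)}(h)$ explicitly rather than reusing $h_{(1)},h_{(2)},\dots$ at different depths, so that it is unambiguous which adjacent pair gets hit by the antipode axiom.
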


\begin{theorem} \label{thm2.11} $($\cite{Radford construction J Algebra 2003}$)$
   Let $H=\bigoplus_{n=0}^\infty H_n$ be a graded Hopf algebra over an algebraically closed field of characteristic $0$.
 Suppose that $H_0=\Bbbk G$ for some finite abelian group, and $H_n=H_{n+1}=\cdots =(0)$ for some $n>0$.
 Then
 $$(\beta, g)\longmapsto [H\bullet_\beta g]$$
 is a bijective correspondence between the Cartesian product of sets $\widehat{G}\times G$ and the set of isoclasses of simple Yetter-Drinfeld $H$-modules.
\end{theorem}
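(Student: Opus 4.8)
The plan is to construct the bijection explicitly and then verify it is both injective and surjective. A preliminary reduction: I claim $\mathrm{Alg}_\Bbbk(H,\Bbbk)=\widehat G$. The augmentation ideal $H^{+}:=\bigoplus_{i\ge1}H_i$ is a two-sided ideal with $(H^{+})^{n}\subseteq\bigoplus_{i\ge n}H_i=(0)$, hence nilpotent, so any algebra map $H\to\Bbbk$ vanishes on $H^{+}$ and factors through $H/H^{+}\cong\Bbbk G$; thus $\mathrm{Alg}_\Bbbk(H,\Bbbk)=\mathrm{Alg}_\Bbbk(\Bbbk G,\Bbbk)=\widehat G$. I also use that $H$ is pointed with coradical $H_0=\Bbbk G$ (so the simple subcoalgebras of $H$ are the lines $\Bbbk g$, $g\in G$), and that the simple Yetter--Drinfeld modules in play are finite-dimensional.

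Fix $(\beta,g)\in\widehat G\times G$. By the preceding Lemma, $H_\beta=(H,\bullet_\beta,\Delta)$ is an object of ${}_H\mathcal{Y}\mathcal{D}^H$ (a left module, right comodule). Put $M_{\beta,g}:=H\bullet_\beta g$, the $H$-submodule of $H_\beta$ generated by the grouplike $g$. A short Sweedler computation yields
$$\Delta(x\bullet_\beta g)=\sum\nolimits_{(x)}(x_{(2)}\bullet_\beta g)\otimes x_{(3)}\,g\,S^{-1}(x_{(1)}),$$
whose left (``comodule'') leg lies in $M_{\beta,g}$; hence $M_{\beta,g}$ is actually a sub-object in ${}_H\mathcal{Y}\mathcal{D}^H$. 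Since $\beta$ annihilates $H^{+}$ and $S$ is graded, the action $\bullet_\beta$ raises the $\mathbb N$-grading, so $M_{\beta,g}=\bigoplus_i H_i\bullet_\beta g$ is graded with degree-zero component $H_0\bullet_\beta g=\mathrm{span}\{\beta(h)g:h\in G\}=\Bbbk g$, on which $G$ acts by the character $\beta$ while the coaction is $g\mapsto g\otimes g$. Now $M_{\beta,g}$ is simple: regarded as a right $H$-comodule it is a subcomodule of $(H,\Delta)$, whose socle is the coradical $\Bbbk G$, so $\mathrm{soc}(M_{\beta,g})=M_{\beta,g}\cap\Bbbk G=(M_{\beta,g})_0=\Bbbk g$; this socle being simple, every nonzero subcomodule of $M_{\beta,g}$ contains $g$, and a nonzero sub-Yetter--Drinfeld module $N$ is in particular a nonzero subcomodule, whence $g\in N$ and $N\supseteq H\bullet_\beta g=M_{\beta,g}$. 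Thus $(\beta,g)\mapsto[M_{\beta,g}]$ is a well-defined map to the set of isoclasses of simple YD-modules.

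For injectivity: an isomorphism $M_{\beta,g}\xrightarrow{\ \sim\ }M_{\beta',g'}$ in ${}_H\mathcal{Y}\mathcal{D}^H$ is in particular a comodule isomorphism, hence carries the comodule socle $\Bbbk g$ onto $\Bbbk g'$; matching the grouplike appearing in the coaction forces $g=g'$, and since the isomorphism is also a module map, matching the scalars $h\bullet(-)=\beta(h)(-)$ versus $\beta'(h)(-)$ on this line forces $\beta=\beta'$. For surjectivity, let $V$ be an arbitrary simple YD-module. Its comodule socle is nonzero and semisimple, so $V$ contains a one-dimensional subcomodule $\Bbbk w$ with $\rho(w)=w\otimes g$ for some $g\in G$; the subspace $V[g]:=\{v\in V:\rho(v)=v\otimes g\}$ is stable under the $G$-action (a grouplike computation with the YD compatibility, using that $G$ is abelian), finite-dimensional, so --- $\Bbbk$ algebraically closed, $\mathrm{char}\,\Bbbk=0$, $G$ finite --- it has a common $G$-eigenvector $w_0$ of some character $\beta$: $\rho(w_0)=w_0\otimes g$ and $h\bullet w_0=\beta(h)w_0$ for all $h\in G$. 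Invoking the universal property of $M_{\beta,g}$ --- that it is initial among YD-modules carrying such a ``type-$(\beta,g)$'' vector --- the assignment $g\mapsto w_0$ extends to a nonzero morphism $M_{\beta,g}\to V$ in ${}_H\mathcal{Y}\mathcal{D}^H$; simplicity of $V$ makes it surjective and simplicity of $M_{\beta,g}$ makes it an isomorphism, so $[V]=[M_{\beta,g}]$.

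The step I expect to be the genuine obstacle is this universal property of $M_{\beta,g}$ used in the surjectivity argument: concretely, that $\mathrm{Ann}_{H_\beta}(g)$ is the smallest annihilator of a type-$(\beta,g)$ vector, so that such a vector in any $V$ produces a map out of $M_{\beta,g}$. Separating $\mathbb N$-degrees only shows $\mathrm{Ann}_{H_\beta}(g)\subseteq H^{+}+\ker(\beta|_{\Bbbk G})$, which controls the degree-zero part; identifying the positive-degree part of $\mathrm{Ann}_{H_\beta}(g)$ is where the particular shape of the action $\bullet_\beta$ from Radford's Lemma --- equivalently, the fact that $H_\beta$ is an induced (relatively free) object of ${}_H\mathcal{Y}\mathcal{D}^H$ --- has to be exploited. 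By contrast, the simplicity of $M_{\beta,g}$ and the injectivity of the map reduce cleanly to the standard fact that the coradical is the socle of the regular comodule.
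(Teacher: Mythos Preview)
The paper does not prove Theorem~\ref{thm2.11}; it merely quotes the statement from \cite{Radford construction J Algebra 2003} and uses it as a black box in Section~4. So there is no ``paper's own proof'' to compare against.

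That said, your outline is the standard route and is essentially correct. The identification $\mathrm{Alg}_{\Bbbk}(H,\Bbbk)=\widehat G$ via nilpotence of $H^+$, the simplicity of $H\bullet_\beta g$ via the comodule socle, and the injectivity argument are all clean. You have also correctly isolated the one genuinely nontrivial step: the surjectivity hinges on knowing that $M_{\beta,g}$ is \emph{projective} (or initial) among Yetter--Drinfeld modules carrying a type-$(\beta,g)$ vector. In Radford's paper this is handled by observing that $H_\beta$ is, up to the twist by $\beta$, the regular module with the regular coaction, so the map $x\bullet_\beta g\mapsto x\bullet w_0$ is automatically well-defined as a module map; one then checks separately that it is a comodule map using the Yetter--Drinfeld compatibility for $w_0$. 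Your worry about identifying $\mathrm{Ann}_{H_\beta}(g)$ in positive degree is therefore slightly misplaced: one does not need to compute the annihilator, only to verify directly that $\phi(x\bullet_\beta g):=x\bullet w_0$ respects both structures, which is a short Sweedler calculation mirroring the one you already did for $\Delta(x\bullet_\beta g)$.
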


\begin{remark}  As we have known,
 $\mathfrak{u}_{r, s}(\mathfrak{g})$ has Drinfeld double structure $\mathcal{D}(H)$ when parameters $r,\, s$ subject to some conditions \cite{Bai Xiaotang 2008, BW1, R Chen PhD, Chen Hu and Wang to appear, Hu Wang  Pacific J. Math 2009, Hu  Wang  JGP 2010}.
 In this case, the module category is equivalent to the Yetter-Drinfeld $H$-module category.
 \iffalse
 Therefore, we only need to find all simple Yetter-Drinfeld $H$-modules.
 Radford  \cite{Radford construction J Algebra 2003} has proved that if $H$ is a graded Hopf algebra over an algebraically closed field of characteristic 0,
 and $H_0=\Bbbk G, H_n=H_{n+1}=\cdots =(0)$, where $G$ is some finite abelian group,
 then the simple Yetter-Drinfeld $H$-module has the form of $H\bullet _\beta g$,
 where $\beta$ is the character of $G$, and $g\in G$.
 In our case, these hypotheses are satisfied and therefore $H\bullet _\beta g$ are exactly all the desired simple modules.
 \fi
 What's more, Pereira gave a solution \cite{Benkart Pereira Witherspoon 2010} to calculate the dimension of $H\bullet_\beta g$, using the computer algebra system S\begin{footnotesize}INGULAR\,\end{footnotesize}:\,:\,P\begin{footnotesize}LURAL\end{footnotesize} \cite{Singular}.
\end{remark}

\section {New Exotic Isoclasses via the Refined Isomorphism Reduction Theorem and the Convex PBW Lyndon Basis Theorem}
\noindent{\it 3.1. Type $A$\;}
(1) Assume that $q$ is a primitive $4$th root of unity.
\begin{prop}\label{prop A 4}
  Assume that $r^4=s^4=1$.
Then Table \ref{table A 4} provides a complete classification of $\mathfrak{u}_{r, s}(\mathfrak{sl}_n)$.
  \begin{table}[h]
  \centering
  \caption{Isoclasses of type $A_{n-1}$ small quantum groups when $q$ is a primitive $4$th root of unity}\label{table A 4}
\begin{tabular}{|c|l|c|}
  \hline
  & Isoclasses & Dimension \\ \hline
  $1$ & $\mathfrak{u}_{1, q}(\mathfrak{sl}_n)\cong \mathfrak{u}_{q, 1}(\mathfrak{sl}_n)\cong \mathfrak{u}_{q^3, 1}(\mathfrak{sl}_n)\cong \mathfrak{u}_{1, q^3}(\mathfrak{sl}_n);$  & $4^{(n+2)(n-1)}$ \\ \hline
  $2$ & $\mathfrak{u}_{q, q^2}(\mathfrak{sl}_n)\cong \mathfrak{u}_{q^2, q}(\mathfrak{sl}_n)\cong \mathfrak{u}_{q^3, q^2}(\mathfrak{sl}_n)\cong \mathfrak{u}_{q^2, q^3}(\mathfrak{sl}_n);$ & $4^{(n+2)(n-1)}$ \\ \hline
  $3$ & $\mathfrak{u}_{1, q^2}(\mathfrak{sl}_n)\cong \mathfrak{u}_{q^2, 1}(\mathfrak{sl}_n);$ & $2^{(n+2)(n-1)}$\\ \hline
  $4$ & $\mathfrak{u}_{q, q^3}(\mathfrak{sl}_n)=\mathfrak{u}_{q, q^{-1}}(\mathfrak{sl}_n)\cong\mathfrak{u}_{q^{-1},q}(\mathfrak{sl}_n)= \mathfrak{u}_{q^3, q}(\mathfrak{sl}_n).$ & $4^{(n+2)(n-1)}$ \\  \hline
\end{tabular}
\end{table}
\end{prop}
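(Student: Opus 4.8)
The plan is to combine the Refined Isomorphism Reduction Theorem (Theorem \ref{thm refineA}) with its Corollaries \ref{cor isoA} and \ref{cor2 isoA}, the dimension formula of Proposition \ref{prop dimension of algebra}, and a direct enumeration of all pairs $(r,s)$ with $r^4=s^4=1$. First I would list the candidate pairs: writing $r=q^y$, $s=q^z$ with $y,z\in\{0,1,2,3\}$ and $q$ a fixed primitive $4$th root of unity, there are $16$ pairs, but those with $r=s$ are excluded since then $\mathfrak u_{r,s}(\mathfrak{sl}_n)$ degenerates (the relation $(R4)$ becomes singular); this leaves $12$ genuine pairs to be sorted into isoclasses.

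Next I would stratify these $12$ pairs according to two invariants. The weak invariant is $\ell=\mathrm{lcm}(|r|,|s|)$, which by Proposition \ref{prop dimension of algebra} governs the dimension $\ell^{(n+2)(n-1)}$; the strong invariant is $m=|rs^{-1}|$, which by Corollary \ref{cor2 isoA} is a genuine isomorphism invariant whenever $m>1$. For $q$ a primitive $4$th root of unity: pairs $(1,q^2)$ and $(q^2,1)$ have $\ell=2$ (row $3$, dimension $2^{(n+2)(n-1)}$), and one checks $rs^{-1}=q^{-2}$ has order $2$; these form a single class by Theorem \ref{thm refineA}(2) (the flip $(r',s')=(s,r)$). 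All remaining $10$ pairs have $\ell=4$, so they split by the value of $m=|rs^{-1}|\in\{2,4\}$: the pairs with $rs^{-1}$ of order $2$, namely $(q,q^3),(q^3,q)$, cannot be isomorphic to any pair with $rs^{-1}$ of order $4$ by Corollary \ref{cor2 isoA}; they are mutually isomorphic via the flip $(r',s')=(s,r)$ in Theorem \ref{thm refineA}(2) together with the $w_0$-symmetry in items (3)-(4), giving row $4$, which is precisely the Lusztig case $\mathfrak u_{q,q^{-1}}(\mathfrak{sl}_n)$. The eight pairs with $rs^{-1}$ of order $4$ — namely $(1,q),(1,q^3),(q,1),(q^3,1),(q,q^2),(q^2,q),(q^2,q^3),(q^3,q^2)$ — all have the same strong invariant $m=4$, so Corollary \ref{cor2 isoA} does not separate them, and I must instead apply the four explicit isomorphisms of Theorem \ref{thm refineA} (the type-$A$ list, Theorem \ref{thm isoA} generalized). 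Item (1) fixes $(r,s)$; item (2) sends $(r,s)\mapsto(s,r)$; item (3) sends $(r,s)\mapsto(s^{-1},r^{-1})$; item (4) sends $(r,s)\mapsto(r^{-1},s^{-1})$. Tracking the orbit of $(1,q)$ under the group generated by these four operations: $(1,q)\mapsto(q,1)$ by (2), $(1,q)\mapsto(q^{-1},1)=(q^3,1)$ by (3), $(1,q)\mapsto(1,q^{-1})=(1,q^3)$ by (4); so $\{(1,q),(q,1),(q^3,1),(1,q^3)\}$ is a single orbit (row $1$). Similarly the orbit of $(q,q^2)$ is $\{(q,q^2),(q^2,q),(q^{-2},q^{-1})=(q^2,q^3),(q^{-1},q^{-2})=(q^3,q^2)\}$ (row $2$). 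These two orbits are disjoint, and I must argue they are not isomorphic to each other: here the invariant $m$ fails, so I would invoke the Hopf $2$-cocycle twist-equivalence machinery of \cite{Benkart Pereira Witherspoon 2010} (which shows they have the same dimension distribution of simple modules, but this does not force an isomorphism) — more directly, Theorem \ref{thm refineA} asserts that the \emph{only} isomorphisms are the four listed forms, so since $(q,q^2)$ is not in the orbit of $(1,q)$ under those four operations, $\mathfrak u_{1,q}(\mathfrak{sl}_n)\ncong\mathfrak u_{q,q^2}(\mathfrak{sl}_n)$ follows immediately from the ``only if'' direction.

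The main obstacle I anticipate is the careful case $\ell=4$, $m=4$: this is exactly where the strong invariant $|rs^{-1}|$ does not distinguish the rows, and one must verify that the orbit decomposition under the four operations of Theorem \ref{thm refineA} genuinely has two orbits of size four (rows $1$ and $2$) rather than collapsing — this requires checking that e.g.\ $(1,q)$ and $(q,q^2)$ lie in distinct orbits, which is a finite but slightly delicate bookkeeping computation modulo $4$. A secondary subtlety is confirming that the flip operation (2) is licit in the refined setting: for $\ell=4$ the original Theorem \ref{thm isoA} already covers this (no excluded order among $\{3,4\}$ appears as a constraint for type $A$), and the degenerate pairs $r=s$ present no issue since they are excluded from the algebra's definition via $(R4)$. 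Once the orbit structure is settled, the dimensions in the last column are read off directly from Proposition \ref{prop dimension of algebra} using $\ell=4$ for rows $1,2,4$ and $\ell=2$ for row $3$, and completeness follows since the four orbits exhaust all $12$ admissible pairs.
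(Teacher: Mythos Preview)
Your proposal is correct and follows essentially the same strategy as the paper: stratify the admissible pairs by the invariants $\ell=\mathrm{lcm}(|r|,|s|)$ (dimension, Proposition~\ref{prop dimension of algebra}) and $m=|rs^{-1}|$ (Corollaries~\ref{cor isoA}/\ref{cor2 isoA}), then separate rows~$1$ and~$2$ (where both invariants agree) via the ``only if'' direction of Theorem~\ref{thm refineA}. Your write-up is in fact more explicit than the paper's, which simply asserts the non-isomorphisms with bare references; your orbit computation under the four operations of Theorem~\ref{thm isoA} is exactly what underlies the paper's Table~\ref{table A 4}, and your brief detour through the cocycle-twist machinery of \cite{Benkart Pereira Witherspoon 2010} is unnecessary (as you yourself note) since Theorem~\ref{thm refineA} already delivers the needed separation of rows~$1$ and~$2$ directly.
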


\begin{proof}
  Firstly, using Theorem \ref{thm refineA}, we know that the candidates $1$ and $2$ are non-isomorphic,
  and candidates $3$ and $4$ are also non-isomorphic.
  Secondly, the candidate $3$ is not isomorphic to the others since its dimension is distinct from the others by Proposition \ref{prop dimension of algebra}.
  Lastly, we conclude that the candidates $1$, $2$, and $4$ are pairwise non-isomorphic by Corollary \ref{cor isoA}.
\end{proof}

(2) Assume that $q$ is a primitive $5$th root of unity.
It implies that $rs^{-1}$ is a primitive $5$th root of unity for all of these candidates.
According to the Isomorphism Reduction Theorem of Type $A$, we have the following
\begin{prop}\label{thm A 5}
  Assume that $r^5=s^5=1$.
  Then Table \ref{table A 5} provides the complete classification of $\mathfrak{u}_{r, s}(\mathfrak{sl}_n)$.
\begin{table}[h]
  \centering
  \caption{Isoclasses of type $A$ small quantum groups when $q$ is a primitive $5$th root of unity}\label{table A 5}
\begin{tabular}{|c|l|}
  \hline
   1 & $\mathfrak{u}_{1, q}(\mathfrak{sl}_n)\cong \mathfrak{u}_{q, 1}(\mathfrak{sl}_n)\cong \mathfrak{u}_{q^4, 1}(\mathfrak{sl}_n)\cong \mathfrak{u}_{1, q^4}(\mathfrak{sl}_n);$ \\ \hline
  2 & $\mathfrak{u}_{q, q^2}(\mathfrak{sl}_n)\cong \mathfrak{u}_{q^2, q}(\mathfrak{sl}_n)\cong \mathfrak{u}_{q^3, q^4}(\mathfrak{sl}_n)\cong \mathfrak{u}_{q^4, q^3}(\mathfrak{sl}_n);$ \\ \hline
  3 & $\mathfrak{u}_{q^2, q^3}(\mathfrak{sl}_n)\cong\mathfrak{u}_{q^3, q^2}(\mathfrak{sl}_n);$\\ \hline
  4 & $\mathfrak{u}_{1, q^2}(\mathfrak{sl}_n)\cong \mathfrak{u}_{q^2, 1}(\mathfrak{sl}_n)
\cong \mathfrak{u}_{1, q^3}(\mathfrak{sl}_n)\cong \mathfrak{u}_{q^3, 1}(\mathfrak{sl}_n);$ \\  \hline
  5 & $\mathfrak{u}_{q, q^3}(\mathfrak{sl}_n)\cong \mathfrak{u}_{q^3, q}(\mathfrak{sl}_n)
\cong \mathfrak{u}_{q^2, q^4}(\mathfrak{sl}_n)\cong \mathfrak{u}_{q^4, q^2}(\mathfrak{sl}_n);$ \\  \hline
  6 & $\mathfrak{u}_{q^{-1},q}(\mathfrak{sl}_n)=\mathfrak{u}_{q^4, q}(\mathfrak{sl}_n)\cong\mathfrak{u}_{q, q^4}(\mathfrak{sl}_n)=\mathfrak{u}_{q, q^{-1}}(\mathfrak{sl}_n).$\\  \hline
\end{tabular}
\end{table}
\end{prop}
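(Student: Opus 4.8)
The plan is to classify the isoclasses of $\mathfrak{u}_{r,s}(\mathfrak{sl}_n)$ when $q$ is a primitive $5$th root of unity by first enumerating all the $(r,s)$ that can occur and then collapsing them into equivalence classes using the Isomorphism Reduction Theorem of type $A$ (Theorem \ref{thm isoA}), its refined version (Theorem \ref{thm refineA}), and the invariant supplied by Corollaries \ref{cor isoA} and \ref{cor2 isoA}. Since $r^5=s^5=1$ and we want $\mathrm{lcm}(|r|,|s|)$ to be genuinely $5$ (otherwise the algebra degenerates), we write $r=q^y$, $s=q^z$ with $y,z\in\{0,1,2,3,4\}$, not both zero, and the pair $(y,z)$ ranges over the $24$ nonzero pairs. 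I would first note that $rs^{-1}=q^{y-z}$ is automatically a primitive $5$th root of unity whenever $y\neq z$, so for all candidates with $y\neq z$ the original Theorem \ref{thm isoA} applies verbatim; the cases $y=z\neq 0$ give $r=s$, where $rs^{-1}=1$ and the algebra is a different (non-quantum) object that falls outside the table — or, if one wishes to include it, it is handled separately.

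The first real step is to organize the $24$ pairs under the group action generated by the four operations of Theorem \ref{thm isoA}: $(r,s)\mapsto(r,s)$, $(r,s)\mapsto(s,r)$, $(r,s)\mapsto(s^{-1},r^{-1})$, $(r,s)\mapsto(r^{-1},s^{-1})$. In the exponent notation modulo $5$ these are $(y,z)\mapsto(y,z),\ (z,y),\ (-z,-y),\ (-y,-z)$, a group of order (at most) $4$ acting on $(\mathbb{Z}/5)^2\setminus\{0\}$. I would compute the orbits explicitly: each orbit has size $4$ except possibly for pairs fixed by some nontrivial element, and tabulate that the $20$ pairs with $y\neq z$ split into five orbits of size $4$, while the $4$ pairs with $y=z$ form their own small orbits. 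Then I would cross-check these orbit sizes against the six rows of Table \ref{table A 5}: rows $1,2,5,6$ each list four pairs, rows $3$ and $4$ list two pairs each, so rows $3$ and $4$ must correspond either to short orbits or to a union that the theorem forces together; I expect row $4$ ($(1,2),(2,1),(1,3),(3,1)$, i.e.\ $rs^{-1}$ of order $5$ via $q^{\pm 1}$ or $q^{\pm 3}$... actually of order $5$ always) to be a genuine size-$4$ orbit written compactly, and similarly for row $3$ — I would recount carefully to make sure all $20$ relevant pairs appear exactly once across the six rows.

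The second step is to verify non-isomorphism between distinct rows. Here the key invariant is the order $m$ of $rs^{-1}$: by Corollary \ref{cor2 isoA}, if two candidates have $rs^{-1}$ of different orders they cannot be isomorphic. For $q$ a primitive $5$th root of unity, $q^{y-z}$ has order $5$ whenever $y\neq z$ and order $1$ when $y=z$, so this invariant alone does \emph{not} separate rows $1$--$6$, since all of them have $rs^{-1}$ of order $5$. Therefore the separation of the six rows must come directly from the four explicit isomorphism forms in Theorem \ref{thm isoA}: two candidates $(r,s)$ and $(r',s')$ with $rs^{-1}$, $r's'^{-1}$ both of order $5$ are isomorphic \emph{if and only if} $(r',s')$ lies in the orbit of $(r,s)$ under the four listed operations. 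So non-isomorphism of rows $1$--$6$ is precisely the statement that these are six distinct orbits, which is established by the orbit computation in Step 1 — no further argument is needed. (The dimension, $5^{(n+2)(n-1)}$ by Proposition \ref{prop dimension of algebra}, is the same for every candidate here, consistent with the table omitting a dimension column.)

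The main obstacle, and the place where care is required, is the bookkeeping of Step 1: making sure the orbit decomposition of $(\mathbb{Z}/5)^2\setminus\{0\}$ under the order-$4$ group is done without error, that every pair listed in a given row of Table \ref{table A 5} really is in one orbit (e.g.\ checking that $\mathfrak{u}_{q,q^2}\cong\mathfrak{u}_{q^3,q^4}$ via operation (3) with the substitution $n-i\leftrightarrow i$ respected, and that the scalar and group-like adjustments in parts (3)--(4) of Theorem \ref{thm isoA} are consistent), and that no two rows share a pair. A secondary subtlety is whether any candidate is fixed by a nontrivial operation, which would force a shorter orbit; for $\ell=5$ one checks $(y,z)$ with $y=-z$, i.e.\ $(1,4),(2,3),(3,2),(4,1)$, and sees these still form a single size-$4$ orbit since operation (2) swaps $(1,4)\leftrightarrow(4,1)$ and operation (3) sends $(1,4)\mapsto(1,4)$ — wait, $(-z,-y)=(-4,-1)=(1,4)$, so $(1,4)$ and $(4,1)$ are each fixed by operation (3) but swapped by operations (2) and (4), giving an orbit $\{(1,4),(4,1)\}$ of size $2$ together with... this is exactly the kind of subtlety that must be traced carefully, and it explains why rows $3$ and $4$ may have only two listed pairs. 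Once the orbit structure is pinned down, the proposition follows immediately.
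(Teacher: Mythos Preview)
Your approach is essentially the same as the paper's: since $5$ is prime, $rs^{-1}$ is automatically a primitive $5$th root of unity for every pair with $r\neq s$, so Theorem \ref{thm isoA} applies directly (without needing the refined version or Corollaries \ref{cor isoA}--\ref{cor2 isoA}), and the isoclasses are exactly the orbits of the Klein four-group $\{(y,z),(z,y),(-z,-y),(-y,-z)\}$ on $(\mathbb{Z}/5)^2\setminus\{\text{diagonal}\}$. The paper states this in one line and leaves the orbit enumeration to the reader.

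Two small slips in your bookkeeping: it is rows $3$ and $6$ (not rows $3$ and $4$) that list only two pairs; these are the orbits of $(2,3)$ and $(1,4)$, each fixed by the operation $(y,z)\mapsto(-z,-y)$ since $y+z\equiv 0\pmod 5$ there. Row $4$ is a genuine size-$4$ orbit. Also, your detour through Corollaries \ref{cor isoA} and \ref{cor2 isoA} is unnecessary here --- as you yourself noticed, the order of $rs^{-1}$ is always $5$, so the invariant gives nothing; the ``only if'' direction of Theorem \ref{thm isoA} already does all the separating. Once you clean up the orbit count (four orbits of size $4$ and two of size $2$, totalling $20$ pairs), the proof is complete.
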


(3) More generally, assume that $q$ is a primitive root of unity with an odd prime order $p$.
Using the Isomorphism Reduction Theorem, we have the following

\begin{theorem}\label{thm A general}
  Assume that $r^p=s^p=1$, where $p$ is an odd prime.
Then Table \ref{table A general} lists the complete classification of $\mathfrak{u}_{r, s}(\mathfrak{sl}_n)$.
  Totally, there are $\frac{p^2-1}{4}$ isoclasses.
  \begin{table}
 \centering
  \caption{Isoclasses of type $A$ small quantum groups when $q$ is a primitive root of unity with odd prime order $p$}\label{table A general}
  \begin{tabular}{|l|}
    \hline
    % after \\: \hline or \cline{col1-col2} \cline{col3-col4} ...
     $\mathfrak{u}_{q^k, q^{k+t}}(\mathfrak{sl}_n)\cong \mathfrak{u}_{q^{k+t}, q^k}(\mathfrak{sl}_n)
     \cong \mathfrak{u}_{q^{p-k}, q^{p-k-t}}(\mathfrak{sl}_n)\cong \mathfrak{u}_{q^{p-k-t}, q^{p-k}}(\mathfrak{sl}_n);$ \\
     \\
      $(1\leq t\leq \frac{p-1}{2} , 0\leq k \leq p-1,\; t, k\in \mathbb{Z})$ \\ \hline
  \end{tabular}
  \end{table}
\end{theorem}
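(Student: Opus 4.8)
The plan is to carry out exactly the same three‐step reasoning used in the proofs of Propositions \ref{prop A 4} and \ref{thm A 5}, but now keeping track of the orbit structure under the symmetries of Theorem \ref{thm isoA} in full generality. First I would observe that since $p$ is an odd prime and $r=q^k$, $s=q^{k+t}$ with $t\not\equiv 0\pmod p$, we automatically have $rs^{-1}=q^{-t}$ a primitive $p$th root of unity, so $m=\ell=p$ for every admissible pair. Hence the Refined Isomorphism Reduction Theorem (Theorem \ref{thm refineA}) reduces to the original Theorem \ref{thm isoA}, and an isomorphism $\mathfrak{u}_{q^k,q^{k+t}}(\mathfrak{sl}_n)\cong\mathfrak{u}_{q^{k'},q^{k'+t'}}(\mathfrak{sl}_n)$ forces $(q^{k'},q^{k'+t'})$ to lie in the set $\{(r,s),\,(s,r),\,(s^{-1},r^{-1}),\,(r^{-1},s^{-1})\}$. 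Translating this into exponents mod $p$, the four options are
$(k,k+t)$, $(k+t,k)$, $(p-k-t,p-k)$, $(p-k,p-k-t)$,
which is precisely the orbit displayed in Table \ref{table A general}. This establishes that two rows of the table that are genuinely different parameter sets give non‑isomorphic algebras, and that each row is a single isoclass.

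Second, I would verify that the rows as listed exhaust all pairs $(r,s)$ with $r^p=s^p=1$ and $r\neq s$ (the case $r=s$ giving $rs^{-1}=1$, excluded since then $\mathfrak{u}_{r,s}(\mathfrak{sl}_n)$ degenerates/does not satisfy the hypothesis of the reduction theorem). Writing $r=q^k$, $s=q^{k+t}$ with $k\in\{0,\dots,p-1\}$ and $t\in\{1,\dots,p-1\}$ parametrizes all such pairs, $p(p-1)$ of them; the orbit map above acts freely in the generic case, with orbits of size $4$, so one expects $\tfrac{p(p-1)}{4}$ — but this is not an integer, so I must analyze the fixed points of the symmetry carefully: the transposition $(k,t)\mapsto(k+t,-t)$ and the "flip" $(k,t)\mapsto(p-k-t,t)$ generate a Klein four‑group acting on $(k,t)$, and the orbit count must be done via Burnside. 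Reorganizing by the invariant $t':=\min(t,p-t)\in\{1,\dots,\tfrac{p-1}{2}\}$ and letting $k$ range over all residues, one checks the four symmetry operations collapse each $t'$‑slice of size $p$ down to the count recorded, and summing over $t'$ gives the total $\tfrac{p^2-1}{4}=\tfrac{(p-1)(p+1)}{4}$, which is an integer since $p$ is odd. I would present this bookkeeping as the orbit‑counting core of the proof.

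Third, I would confirm pairwise non‑isomorphism of the distinct rows: since for every pair here $rs^{-1}$ has order exactly $p=\ell$, Corollary \ref{cor2 isoA} does not directly separate them (the invariant $|rs^{-1}|$ is constant), so the separation must come entirely from Theorem \ref{thm isoA} itself — i.e., from the fact that the four explicit parameter substitutions are the *only* ones realizing an isomorphism. Thus if $(k',k'+t')$ is not in the orbit of $(k,k+t)$, no isomorphism exists. All rows have the same dimension $p^{(n+2)(n-1)}$ by Proposition \ref{prop dimension of algebra}, so dimension is useless as an invariant here; this is the point where the present theorem is genuinely more delicate than Proposition \ref{prop A 4}.

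The main obstacle I anticipate is the orbit‑counting step: one must be careful that the Klein four‑group action on the index set $\{(k,t)\}$ can have fixed points (for instance when $2k+t\equiv 0\pmod p$, the transposition and flip may coincide or fix a point), and these fixed points are exactly what make the naive count $\tfrac{p(p-1)}{4}$ fail to be an integer while the corrected count $\tfrac{p^2-1}{4}$ succeeds. Handling these degenerate orbits — showing that they still correspond to single isoclasses and that the reindexing by $(t,k)$ with $1\le t\le\tfrac{p-1}{2}$, $0\le k\le p-1$ lists each isoclass exactly once — is the combinatorial heart of the argument. Everything else is a direct appeal to Theorem \ref{thm isoA} and Proposition \ref{prop dimension of algebra}.
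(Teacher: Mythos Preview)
Your proposal is correct and follows the same route as the paper, which in fact offers no proof beyond the phrase ``Using the Isomorphism Reduction Theorem, we have the following'': the paper simply records the orbit $\{(r,s),(s,r),(s^{-1},r^{-1}),(r^{-1},s^{-1})\}$ in the table and asserts the total $\frac{p^2-1}{4}$ without justification. Your Burnside count on the Klein four-group action (identity fixes $p(p-1)$ pairs, the swap and the full inversion fix none since $p$ is odd, and $(r,s)\mapsto(s^{-1},r^{-1})$ fixes the $p-1$ pairs with $rs=1$) gives $\tfrac{1}{4}\bigl(p(p-1)+(p-1)\bigr)=\tfrac{p^2-1}{4}$, which is exactly the omitted combinatorial step; your observation that dimension and the invariant $|rs^{-1}|$ are both constant here, so that separation rests entirely on Theorem~\ref{thm isoA}, is also on point.
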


(4) Assume that $q$ is a primitive $6$th root of unity.
\begin{prop}\label{prop A 6}
Assume that $r^6=s^6=1$. Then Table \ref{table A 6} provides the complete classification of $\mathfrak{u}_{r, s}(\mathfrak{sl}_n)$.
\begin{table}[h]
  \centering
   \renewcommand{\tablename}{Table}
  \caption{Isoclasses of type $A_{n-1}$ small quantum groups when $q$ is a primitive $6$th root of unity}\label{table A 6}
\begin{tabular}{|c|l|c|}
  \hline
  & Isoclasses & Dimension \\ \hline
  $1$ & $\mathfrak{u}_{1, q}(\mathfrak{sl}_n)\cong \mathfrak{u}_{q, 1}(\mathfrak{sl}_n)\cong \mathfrak{u}_{q^5, 1}(\mathfrak{sl}_n)\cong \mathfrak{u}_{1, q^5}(\mathfrak{sl}_n);$ & $6^{(n+2)(n-1)}$ \\ \hline
  $2$ & $\mathfrak{u}_{q, q^2}(\mathfrak{sl}_n)\cong \mathfrak{u}_{q^2, q}(\mathfrak{sl}_n)\cong \mathfrak{u}_{q^4, q^5}(\mathfrak{sl}_n)\cong \mathfrak{u}_{q^5, q^4}(\mathfrak{sl}_n);$ & $6^{(n+2)(n-1)}$ \\ \hline
  $3$ & $\mathfrak{u}_{q^2, q^3}(\mathfrak{sl}_n)\cong \mathfrak{u}_{q^3, q^2}(\mathfrak{sl}_n)
\cong \mathfrak{u}_{q^4, q^3}(\mathfrak{sl}_n)\cong \mathfrak{u}_{q^3, q^4}(\mathfrak{sl}_n);$ & $6^{(n+2)(n-1)}$\\ \hline
  $4$ & $\mathfrak{u}_{1, q^2}(\mathfrak{sl}_n)\cong \mathfrak{u}_{q^2, 1}(\mathfrak{sl}_n)
\cong \mathfrak{u}_{1, q^4}(\mathfrak{sl}_n)\cong \mathfrak{u}_{q^4, 1}(\mathfrak{sl}_n);$ &$3^{(n+2)(n-1)}$ \\  \hline
  $5$ & $\mathfrak{u}_{q, q^3}(\mathfrak{sl}_n)\cong \mathfrak{u}_{q^3, q}(\mathfrak{sl}_n)
\cong \mathfrak{u}_{q^5, q^3}(\mathfrak{sl}_n)\cong \mathfrak{u}_{q^3, q^5}(\mathfrak{sl}_n);$ & $6^{(n+2)(n-1)}$  \\  \hline
  $6$ & $\mathfrak{u}_{q, q^5}(\mathfrak{sl}_n)\cong \mathfrak{u}_{q^5, q}(\mathfrak{sl}_n);$& $6^{(n+2)(n-1)}$ \\  \hline
  $7$ & $\mathfrak{u}_{q^2, q^4}(\mathfrak{sl}_n)\cong \mathfrak{u}_{q^4, q^2}(\mathfrak{sl}_n);$ & $3^{(n+2)(n-1)}$  \\ \hline
  $8$ & $\mathfrak{u}_{1, q^3}(\mathfrak{sl}_n)\cong \mathfrak{u}_{q^3, 1}(\mathfrak{sl}_n);$ & $2^{(n+2)(n-1)}$ \\  \hline
  $9$ & $\mathfrak{u}_{q^2, q^5}(\mathfrak{sl}_n)\cong \mathfrak{u}_{q^5, q^2}(\mathfrak{sl}_n)
\cong \mathfrak{u}_{q, q^4}(\mathfrak{sl}_n)\cong \mathfrak{u}_{q^4, q}(\mathfrak{sl}_n).$ & $6^{(n+2)(n-1)}$ \\ \hline
\end{tabular}
\end{table}
\end{prop}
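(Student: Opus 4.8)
The plan is to convert the statement into a finite computation modulo $6$ and then peel off the isomorphism classes with three invariants of increasing strength: the dimension $\ell^{(n+2)(n-1)}$ (Proposition \ref{prop dimension of algebra}), the order $m$ of $rs^{-1}$ (Corollary \ref{cor2 isoA}), and finally the full Refined Isomorphism Reduction Theorem \ref{thm refineA}. Write $r=q^y$, $s=q^z$ with $q$ a primitive $6$th root of unity and $0\le y,z\le 5$; since relation $(R4)$ forces $r_i\neq s_i$ one needs $y\neq z$, so there are exactly $30$ admissible pairs, and a count shows the nine rows of Table \ref{table A 6} account for $4+4+4+4+4+2+2+2+4=30$ of them. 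Hence it suffices to show (a) each row is a single isoclass and (b) distinct rows give non-isomorphic algebras.

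For (a) I would apply to a representative of each row the four operations of Theorem \ref{thm isoA}(1)--(4), i.e. pass from $(r,s)$ to $(s,r)$, to $(s^{-1},r^{-1})$ and to $(r^{-1},s^{-1})$, and verify that the resulting orbit is exactly that row. The point to check first is that these operations send $rs^{-1}$ to $(rs^{-1})^{\pm1}$, hence preserve both $\ell=\mathrm{lcm}(|r|,|s|)$ and $m=|rs^{-1}|$; thus within each orbit the hypotheses of Theorem \ref{thm refineA} hold and the refined theorem yields the claimed Hopf-algebra isomorphisms. A short tabulation then records $m=6$ on rows $1,2,3$, $m=3$ on rows $4,5,6,7$, $m=2$ on rows $8,9$, while $\ell=6$ on rows $1,2,3,5,6,9$, $\ell=3$ on rows $4,7$, and $\ell=2$ on row $8$.

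For (b), Proposition \ref{prop dimension of algebra} separates, by the value of $\ell$, the blocks $\{1,2,3,5,6,9\}$, $\{4,7\}$ and $\{8\}$, whose dimensions $6^{(n+2)(n-1)},3^{(n+2)(n-1)},2^{(n+2)(n-1)}$ are distinct. Inside $\ell=6$, Corollary \ref{cor2 isoA} (the order of $rs^{-1}$ is an isomorphism invariant) further splits off $\{1,2,3\}$ ($m=6$), $\{5,6\}$ ($m=3$) and $\{9\}$ ($m=2$). What remains is to show rows $1,2,3$ are pairwise non-isomorphic, rows $5$ and $6$ are non-isomorphic, and rows $4$ and $7$ are non-isomorphic; in each of these cases the competing rows share the same $\ell$ and the same $m$, so I would invoke Theorem \ref{thm refineA} directly: any isomorphism $\mathfrak{u}_{r,s}(\mathfrak{sl}_n)\cong\mathfrak{u}_{r',s'}(\mathfrak{sl}_n)$ forces $(r',s')$ into the orbit of $(r,s)$ (which has at most four elements), and a direct check modulo $6$ shows this orbit is precisely the originating row — for instance $(q^2,q^4)$ is not in the orbit of $(1,q^2)$, so rows $4$ and $7$ differ; likewise $(q,q^5)$ lies outside the orbit of $(q,q^3)$, and $(q,q^2)$, $(q^2,q^3)$ share no orbit with $(1,q)$.

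The one non-routine ingredient is justifying the use of Theorem \ref{thm isoA} — originally stated for $rs^{-1}$ a primitive $\ell$th root of unity — in the six rows where $m=|rs^{-1}|$ is a proper divisor of $\ell$. This is exactly what the refined machinery above supplies: Lemma \ref{lemm skewp} pins down the skew-primitive elements and shows the extra ones $e_i^m,f_i^m$ occur precisely when $m\neq\ell$, which both makes Theorem \ref{thm refineA} applicable and, via Corollaries \ref{cor isoA} and \ref{cor2 isoA}, upgrades the order of $rs^{-1}$ to a genuine invariant. Granting this, the rest is a finite bookkeeping of orbits of $\mathbb{Z}/6$-pairs under swap and negation.
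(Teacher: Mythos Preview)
Your proposal is correct and follows essentially the same route as the paper: both arguments use the dimension formula (Proposition \ref{prop dimension of algebra}) to separate rows by $\ell$, the invariance of the order of $rs^{-1}$ (Corollaries \ref{cor isoA} and \ref{cor2 isoA}) to separate rows with the same $\ell$ but different $m$, and the Refined Isomorphism Reduction Theorem \ref{thm refineA} to finish off the remaining cases $\{1,2,3\}$, $\{5,6\}$, $\{4,7\}$. Your write-up is somewhat more systematic in that you explicitly verify completeness (the $30$ pairs) and the orbit structure for part (a), which the paper leaves implicit, but the substance is the same.
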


\begin{proof}
   Firstly, using Theorem \ref{thm refineA}, we conclude that candidates $1$--$3$ are  pairwise non-isomorphic,
   candidate $4$ is non-isomorphic to candidate $7$,
    and candidate $5$ is non-isomorphic to candidate $6$.
    Secondly, candidates $4$, $7$, and $8$ are non-isomorphic to the others since their dimensions are distinguished by Proposition \ref{prop dimension of algebra}.
    Finally, we claim that candidates $1$--$3, 5, 6$, and $9$ are pairwise non-isomorphic by Corollaries \ref{cor isoA} and \ref{cor2 isoA}.
\end{proof}

\begin{remark}
  If $q$ is a primitive $6$th root of unity, then $q^2$ is a primitive $3$rd root of unity.
  That is to say, Table \ref{table A 6} has contained the situation that $r^3=s^3=1$ (the candidates $4$ and $7$).
\end{remark}

(5) Assume that $q$ is an primitive $8$th root of unity.
\begin{prop}\label{prop A 8}
Assume that $r^8=s^8=1$.
Then Table \ref{table A 8} provides a complete classification of $\mathfrak{u}_{r, s}(\mathfrak{sl}_n)$.
\begin{table}[h]
  \centering
  \renewcommand{\tablename}{Table}
   \caption{Isoclasses of type $A_{n-1}$ small quantum groups when $q$ is a primitive $8$th root of unity}
 \label{table A 8}
\begin{tabular}{|c|l|c|}
  \hline
  & Isoclasses & Dimension \\ \hline
   $1$ & $\mathfrak{u}_{1, q}(\mathfrak{sl}_n)\cong \mathfrak{u}_{q, 1}(\mathfrak{sl}_n)\cong \mathfrak{u}_{q^7, 1}(\mathfrak{sl}_n)\cong \mathfrak{u}_{1, q^7}(\mathfrak{sl}_n);$ & $8^{(n+2)(n-1)}$ \\ \hline
  $2$ & $\mathfrak{u}_{q, q^2}(\mathfrak{sl}_n)\cong \mathfrak{u}_{q^2, q}(\mathfrak{sl}_n)\cong \mathfrak{u}_{q^6, q^7}(\mathfrak{sl}_n)\cong \mathfrak{u}_{q^7, q^6}(\mathfrak{sl}_n);$  & $8^{(n+2)(n-1)}$\\ \hline
  $3$ & $\mathfrak{u}_{q^2, q^3}(\mathfrak{sl}_n)\cong \mathfrak{u}_{q^3, q^2}(\mathfrak{sl}_n)
\cong \mathfrak{u}_{q^6, q^5}(\mathfrak{sl}_n)\cong \mathfrak{u}_{q^5, q^6}(\mathfrak{sl}_n);$& $8^{(n+2)(n-1)}$\\ \hline
  $4$ & $\mathfrak{u}_{q^3, q^4}(\mathfrak{sl}_n)\cong \mathfrak{u}_{q^4, q^3}(\mathfrak{sl}_n)
\cong \mathfrak{u}_{q^4, q^5}(\mathfrak{sl}_n)\cong \mathfrak{u}_{q^5, q^4}(\mathfrak{sl}_n);$& $8^{(n+2)(n-1)}$\\  \hline
  $5$ & $\mathfrak{u}_{1, q^3}(\mathfrak{sl}_n)\cong \mathfrak{u}_{q^3, 1}(\mathfrak{sl}_n)
\cong \mathfrak{u}_{q^5, 1}(\mathfrak{sl}_n)\cong \mathfrak{u}_{1, q^5}(\mathfrak{sl}_n);$ & $8^{(n+2)(n-1)}$\\  \hline
  $6$ & $\mathfrak{u}_{q, q^4}(\mathfrak{sl}_n)\cong \mathfrak{u}_{q^4, q}(\mathfrak{sl}_n)
\cong \mathfrak{u}_{q^7, q^4}(\mathfrak{sl}_n)\cong \mathfrak{u}_{q^4, q^7}(\mathfrak{sl}_n);$& $8^{(n+2)(n-1)}$ \\  \hline
  $7$ & $\mathfrak{u}_{q^2, q^5}(\mathfrak{sl}_n)\cong \mathfrak{u}_{q^5, q^2}(\mathfrak{sl}_n)
\cong \mathfrak{u}_{q^3, q^6}(\mathfrak{sl}_n)\cong \mathfrak{u}_{q^6, q^3}(\mathfrak{sl}_n);$& $8^{(n+2)(n-1)}$\\  \hline
  $8$ & $\mathfrak{u}_{q, q^6}(\mathfrak{sl}_n)\cong \mathfrak{u}_{q^6, q}(\mathfrak{sl}_n)
\cong \mathfrak{u}_{q^2, q^7}(\mathfrak{sl}_n)\cong \mathfrak{u}_{q^7, q^2}(\mathfrak{sl}_n);$& $8^{(n+2)(n-1)}$\\  \hline
  $9$ & $\mathfrak{u}_{1, q^2}(\mathfrak{sl}_n)\cong \mathfrak{u}_{q^2, 1}(\mathfrak{sl}_n)
\cong \mathfrak{u}_{q^6, 1}(\mathfrak{sl}_n)\cong \mathfrak{u}_{1, q^6}(\mathfrak{sl}_n);$& $4^{(n+2)(n-1)}$ \\ \hline
  $10$ & $\mathfrak{u}_{q, q^3}(\mathfrak{sl}_n)\cong \mathfrak{u}_{q^3, q}(\mathfrak{sl}_n)
\cong \mathfrak{u}_{q^5, q^7}(\mathfrak{sl}_n)\cong \mathfrak{u}_{q^7, q^5}(\mathfrak{sl}_n);$& $8^{(n+2)(n-1)}$ \\ \hline
  $11$ & $\mathfrak{u}_{q^2, q^4}(\mathfrak{sl}_n)\cong \mathfrak{u}_{q^4, q^2}(\mathfrak{sl}_n)
\cong \mathfrak{u}_{q^6, q^4}(\mathfrak{sl}_n)\cong \mathfrak{u}_{q^4, q^6}(\mathfrak{sl}_n);$& $4^{(n+2)(n-1)}$\\ \hline
  $12$ & $\mathfrak{u}_{q^3, q^5}(\mathfrak{sl}_n)\cong \mathfrak{u}_{q^5, q^3}(\mathfrak{sl}_n);$& $8^{(n+2)(n-1)}$ \\  \hline
  $13$ & $\mathfrak{u}_{q, q^7}(\mathfrak{sl}_n)\cong \mathfrak{u}_{q^7, q}(\mathfrak{sl}_n);$& $8^{(n+2)(n-1)}$ \\  \hline
  $14$ & $\mathfrak{u}_{q, q^5}(\mathfrak{sl}_n)\cong \mathfrak{u}_{q^5, q}(\mathfrak{sl}_n)
\cong \mathfrak{u}_{q^7, q^3}(\mathfrak{sl}_n)\cong \mathfrak{u}_{q^3, q^7}(\mathfrak{sl}_n);$& $8^{(n+2)(n-1)}$ \\  \hline
  $15$ & $\mathfrak{u}_{q^2, q^6}(\mathfrak{sl}_n)\cong \mathfrak{u}_{q^6, q^2}(\mathfrak{sl}_n);$& $4^{(n+2)(n-1)}$\\  \hline
  $16$ & $\mathfrak{u}_{q, q^{-1}}(\mathfrak{sl}_n)=\mathfrak{u}_{1, q^4}(\mathfrak{sl}_n)\cong \mathfrak{u}_{q^4, 1}(\mathfrak{sl}_n)=\mathfrak{u}_{ q^{-1},q}(\mathfrak{sl}_n).$& $2^{(n+2)(n-1)}$\\  \hline
\end{tabular}
\end{table}
\end{prop}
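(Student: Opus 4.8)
The plan is to classify the $\mathfrak{u}_{r,s}(\mathfrak{sl}_n)$ with $r^8=s^8=1$ exactly as in the proof of Proposition \ref{prop A 6}, by first enumerating all candidates, then separating them into isomorphism classes using three tools: the Isomorphism Reduction Theorem of type $A$ (Theorem \ref{thm isoA}) together with its refined version (Theorem \ref{thm refineA}) to produce the ``$\cong$'' groupings within each row; Proposition \ref{prop dimension of algebra} (dimension $=\ell^{(n+2)(n-1)}$ where $\ell=\mathrm{lcm}(|r|,|s|)$) to separate rows of different dimension; and Corollaries \ref{cor isoA}, \ref{cor2 isoA} (the order of $rs^{-1}$ is an invariant) to separate rows of equal dimension but differing $|rs^{-1}|$.

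First I would enumerate all pairs $(r,s)=(q^a,q^b)$ with $q$ a primitive $8$th root of unity, $0\le a,b\le 7$, modulo the symmetry group generated by the four moves of Theorem \ref{thm isoA}: $(r,s)\mapsto(s,r)$, $(r,s)\mapsto(s^{-1},r^{-1})$, $(r,s)\mapsto(r^{-1},s^{-1})$ (note that for type $A_{n-1}$ with the diagram automorphism $i\mapsto n-i$ these are all Hopf-isomorphisms). This partitions the $64$ pairs into the $16$ orbits displayed in Table \ref{table A 8}; the rows where an orbit has only two entries (rows $12,13,15,16$) are exactly those fixed by some nontrivial subgroup. For each row I would record the two invariants $\ell=\mathrm{lcm}(|r|,|s|)$ and $m=|rs^{-1}|$: the value of $\ell$ gives the dimension column, and one checks $\ell\in\{8,4,2\}$ across the sixteen rows, matching the stated dimensions $8^{(n+2)(n-1)}$, $4^{(n+2)(n-1)}$, $2^{(n+2)(n-1)}$.

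Next, to prove the sixteen classes are pairwise non-isomorphic, I would argue: (a) rows with different $\ell$ are non-isomorphic by Proposition \ref{prop dimension of algebra} — this isolates row $16$ ($\ell=2$) and the group $\{9,11,15\}$ ($\ell=4$) from the rest; (b) within the $\ell=4$ group, compute $m=|rs^{-1}|$: for row $9$, $rs^{-1}=q^{-2}$ has order $4$; for row $11$, $rs^{-1}=q^{-2}$ — wait, here one must be careful, so instead within each fixed-$\ell$ block I would apply Corollary \ref{cor2 isoA} using $m=|rs^{-1}|$ as the separating invariant (e.g.\ $q/q^2$ and $q^2/q^4$ both of order... ) and where $m$ coincides fall back on the explicit list in Theorem \ref{thm isoA}(1)--(4): two pairs in distinct rows cannot be related by any of the four moves, hence are non-isomorphic; (c) within the $\ell=8$ block (rows $1$--$8$, $10$, $12$, $13$, $14$) the same combination — first split by $m=|rs^{-1}|\in\{8,4,2\}$ via Corollary \ref{cor2 isoA}, then within each sub-block verify by direct inspection that no two representatives lie in one orbit under the four moves of Theorem \ref{thm isoA}.

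The main obstacle I anticipate is the bookkeeping in step (c): several rows with $\ell=8$ share the same value of $m=|rs^{-1}|$ (for instance rows with $rs^{-1}$ a primitive $8$th root all have $m=8$), so Corollary \ref{cor2 isoA} alone does not finish the job and one must genuinely check non-conjugacy under the explicit four-element move set of Theorem \ref{thm isoA}, taking care that for $\mathfrak{sl}_n$ the index-reversal $i\mapsto n-i$ in cases $(3)$ and $(4)$ does not merge distinct orbits (it cannot, since it acts trivially on the parameter pair up to the already-listed moves). This is a finite, if tedious, verification. I would also remark, as the authors do after Proposition \ref{prop A 6}, that since $q^2$ is a primitive $4$th root of unity and $q^4=-1$, Table \ref{table A 8} subsumes the $r^4=s^4=1$ and $r^2=s^2=1$ cases (rows $9,11,15$ and row $16$ respectively), consistent with Propositions \ref{prop A 4}.
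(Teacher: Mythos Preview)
Your plan is correct and follows essentially the same route as the paper: form the $16$ orbits under the four moves of Theorem~\ref{thm isoA}/\ref{thm refineA}, separate rows of different dimension via Proposition~\ref{prop dimension of algebra}, separate rows of equal $\ell$ but different $m=|rs^{-1}|$ via Corollaries~\ref{cor isoA}--\ref{cor2 isoA}, and for rows sharing both $\ell$ and $m$ invoke the refined Theorem~\ref{thm refineA} to check that no two representatives are related by any of the four moves. The only cosmetic difference is that the paper dispatches the $\ell\le 4$ block (rows $9,11,15,16$) in one stroke by setting $t=q^2$ and quoting Proposition~\ref{prop A 4}, whereas you relegate this reduction to a closing remark and instead work through the general machinery; your hesitation in step~(b) disappears once you observe that rows $9$ and $11$ both have $m=4$ (so Corollary~\ref{cor2 isoA} does not separate them) but lie in disjoint orbits under the four moves, exactly as in the $\ell=4$ classification.
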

\begin{proof}
   Using Theorem \ref{thm refineA}, we conclude that candidates $1$--$8$ are pairwise non-isomorphic.
   Similarly, candidates $10$, $12$, and $13$  are pairwise non-isomorphic.
   Combining with Corollaries \ref{cor isoA} and \ref{cor2 isoA},
   we obtain that  candidates $1$--$8$, $10$, $12$, $13$, and $14$ are non-isomorphic to each other.
   Obviously, candidates $9$, $11$, $15$, and $16$ are non-isomorphic to each other by Proposition \ref{prop A 4}
   and their distinguished dimensions further set them apart from the other candidates.
\end{proof}

\begin{remark}
  For the higher order of $q$, especially when $q$ is not prime, we can also get the complete classification of $\mathfrak{u}_{r, s}(\mathfrak{sl}_n)$ with the use of the refined Isomorphism Reduction Theorem (Theorem \ref{thm refineA}).
\end{remark}

\smallskip
\noindent{\it 3.2. Type $B$\;}
(1) Assume that $q$ is a primitive $5$th root of unity.
According to the Isomorphism Reduction Theorem of type $B$, we conclude that
\begin{prop}\label{thm B 5}
  Assume that $r^5=s^5=1$.
  Then Table \ref{table B 5} provides a complete classification of $\mathfrak{u}_{r, s}(\mathfrak{so}_{2n+1})$.
\end{prop}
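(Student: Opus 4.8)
The plan is to imitate the argument used in the type $A$ cases (Propositions \ref{prop A 4}, \ref{thm A 5}, \ref{prop A 6}, \ref{prop A 8}), replacing the type $A$ Isomorphism Reduction Theorem by the type $B$ one (Theorem \ref{thm isoB}, the one for $\mathfrak{so}_{2n+1}$), whose hypothesis $\ell\neq 3,4$ is satisfied here since $\ell=5$. First I would enumerate the pairs $(r,s)$ with $r^5=s^5=1$ that are genuine candidates for $\mathfrak{u}_{r,s}(\mathfrak{so}_{2n+1})$: writing $r=q^a$, $s=q^b$ with $q$ a fixed primitive $5$th root of unity and $a,b\in\{0,1,2,3,4\}$ (not both zero — if both are zero the algebra degenerates), and using that $\mathrm{lcm}(|r|,|s|)=5$, group them into the orbits under the symmetries furnished by Theorem \ref{thm isoB}: the flip $(r,s)\mapsto(s,r)$ up to the scalar $\zeta=\pm1$, and the composite with $\zeta(r,s)$. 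Since $|q|=5$ is odd, the $2$nd root of unity $\zeta=-1$ satisfies $\zeta=q^{?}$ only trivially ($-1\notin\langle q\rangle$), so in the root-of-unity-of-order-$5$ setting the factor $\zeta$ collapses and the only identifications are $(r,s)\sim(s,r)$ together with $(r,s)\sim(s^{-1},r^{-1})$ coming from composing the two listed isomorphisms; this yields exactly the six orbits displayed in Table \ref{table B 5}.

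Next I would verify that these orbits are pairwise non-isomorphic. For this the key invariant is the order $m$ of $rs^{-1}$, available through Corollary \ref{cor2 isoA} (its proof only uses Lemma \ref{lemm skewp} and the Hopf structure, which hold for every type, in particular type $B$): among the candidates with $|rs^{-1}|=5$ versus those with $|rs^{-1}|<5$, no isomorphism is possible by Corollary \ref{cor isoA}, and among those with the same $m$ one invokes the refined reduction theorem (Theorem \ref{thm refineA}) to see the surviving orbits are distinct. Where two orbits share the same $m$ and are not separated by the reduction theorem directly, I would fall back on the dimension $\ell^{?}=5^{\dim\mathfrak{n}^+}$-type formula (the type $B$ analogue of Proposition \ref{prop dimension of algebra}, with $\ell$ replaced by $|rs^{-1}|$ where the parameter degenerates): candidates for which $rs^{-1}$ has smaller order have strictly smaller dimension and are thereby separated from the full-dimensional ones. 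Assembling these three sieves — dimension, the order of $rs^{-1}$, and the refined reduction theorem — exhausts all pairs and shows the six orbits in Table \ref{table B 5} are a complete, irredundant list.

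The main obstacle I anticipate is bookkeeping rather than conceptual: one must be careful that the good-Lyndon-word construction of the root vectors $e_\alpha,f_\alpha$ and the relation $(R7)$ behave correctly under the specific identifications in Theorem \ref{thm isoB} — in particular, that the short-root index $i=n$ (where the factor $\zeta^{\delta_{i,n}}$ appears in the isomorphism on $f_i$) does not spoil the Hopf-algebra isomorphism when $\ell=5$. Since $\zeta$ must be a $2$nd root of unity and here no nontrivial such scalar lies in $\langle q\rangle$, the seemingly extra freedom in Theorem \ref{thm isoB} contributes nothing new, so the orbit count is governed purely by the substitutions $(r,s)\mapsto(s,r)$ and $(r,s)\mapsto(s^{-1},r^{-1})$; confirming this collapse cleanly is the one point that needs explicit checking before the six-orbit conclusion can be declared. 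The remaining verifications are routine case analysis of the $5\times5$ array of exponents $(a,b)$, entirely parallel to the proof of Proposition \ref{thm A 5}.
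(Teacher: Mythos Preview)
Your proposal contains a genuine error: Table \ref{table B 5} lists \emph{ten} isoclasses, not six, and the discrepancy traces to a misreading of the type $B$ reduction theorem. In Theorem 2.4 the two cases are $(r',s')=\zeta(r,s)$ and $(r',s')=\zeta(s,r)$ with $\zeta^2=1$; there is \emph{no} identification $(r,s)\sim(s^{-1},r^{-1})$ in type $B$. That extra symmetry belongs to type $A$ (cases (3) and (4) of Theorem \ref{thm isoA}, which arise from the Dynkin diagram automorphism $i\mapsto n-i$ of $A_{n-1}$) and has no analogue for $B_n$. By importing it you over-identify, collapsing the ten orbits of Table \ref{table B 5} to six --- for instance you would merge $\mathfrak{u}_{1,q}$ with $\mathfrak{u}_{1,q^4}$, but these are listed as distinct isoclasses (rows 1 and 4), and rightly so.

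Once this is corrected the argument becomes much simpler than your outline. Since $5$ is prime, every admissible pair $(r,s)$ with $r^5=s^5=1$ and $r\neq s$ has $rs^{-1}$ of order exactly $5$; hence all candidates have the same dimension and the same value of the invariant $|rs^{-1}|$, so neither Corollary \ref{cor isoA} nor \ref{cor2 isoA} does any work here. Moreover the option $\zeta=-1$ in Theorem 2.4 never lands back inside the set $\{r^5=s^5=1\}$ (as $-1$ is not a $5$th root of unity), so the only surviving identification is the plain swap $(r,s)\sim(s,r)$. That immediately partitions the $5^2-5=20$ ordered pairs into $10$ unordered pairs, which is exactly Table \ref{table B 5} and is precisely how the paper obtains the result: a one-line application of the original (not refined) type $B$ Isomorphism Reduction Theorem, consistent with the general count $\frac{p^2-p}{2}$ of Theorem \ref{thm B general}.
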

\begin{table}
  \centering
  \caption{Isoclasses of type $B_n$ small quantum groups when $q$ is a primitive $5$th root of unity}\label{table B 5}
  \begin{tabular}{|c|l|c|l|}
    \hline
    % after \\: \hline or \cline{col1-col2} \cline{col3-col4} ...
    1 & $\mathfrak{u}_{1, q}(\mathfrak{so}_{2n+1})\cong \mathfrak{u}_{q, 1}(\mathfrak{so}_{2n+1});$
   &6 & $\mathfrak{u}_{q, q^3}(\mathfrak{so}_{2n+1})\cong \mathfrak{u}_{q^3, q}(\mathfrak{so}_{2n+1});$ \\ \hline
    2 & $\mathfrak{u}_{1, q^2}(\mathfrak{so}_{2n+1})\cong \mathfrak{u}_{q^2, 1}(\mathfrak{so}_{2n+1});$
   &7 & $\mathfrak{u}_{q, q^4}(\mathfrak{so}_{2n+1})\cong \mathfrak{u}_{q^4, q}(\mathfrak{so}_{2n+1});$ \\ \hline
    3 & $\mathfrak{u}_{1, q^3}(\mathfrak{so}_{2n+1})\cong \mathfrak{u}_{q^3, 1}(\mathfrak{so}_{2n+1});$
   &8 & $\mathfrak{u}_{q^2, q^3}(\mathfrak{so}_{2n+1})\cong \mathfrak{u}_{q^3, q^2}(\mathfrak{so}_{2n+1});$\\ \hline
    4 & $\mathfrak{u}_{1, q^4}(\mathfrak{so}_{2n+1})\cong \mathfrak{u}_{q^4, 1}(\mathfrak{so}_{2n+1});$
   & 9 & $\mathfrak{u}_{q^2, q^4}(\mathfrak{so}_{2n+1})\cong \mathfrak{u}_{q^4, q^2}(\mathfrak{so}_{2n+1});$\\ \hline
    5 & $\mathfrak{u}_{q, q^2}(\mathfrak{so}_{2n+1})\cong \mathfrak{u}_{q^2, q}(\mathfrak{so}_{2n+1});$
   &10 & $\mathfrak{u}_{q^3, q^4}(\mathfrak{so}_{2n+1})\cong \mathfrak{u}_{q^4, q^3}(\mathfrak{so}_{2n+1}).$\\ \hline
  \end{tabular}
\end{table}

(2) More generally, assume that $q$ is a primitive $p$th root of unity, where $p$ is an odd prime with $p>3$.
By the Isomorphism Reduction Theorem of type $B$, we conclude that
    \begin{theorem}\label{thm B general}
    Assume that $r^p=s^p=1$, where $p$ is an odd prime with $p>3$.
    Then $\mathfrak{u}_{r, s}(\mathfrak{so}_{2n+1})\cong \mathfrak{u}_{r', s'}(\mathfrak{so}_{2n+1})$
    if and only if $(r, s)=(r', s')$ or $(r, s)=(s', r' )$.
    In this case, there are $\frac{p^2-p}{2}$ isoclasses.
    \end{theorem}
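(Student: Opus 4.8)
The plan is to derive the statement entirely from the type-$B$ Isomorphism Reduction Theorem (the $\mathfrak{so}_{2n+1}$ case among Theorems~\ref{thm isoA}--\ref{thm isoG2}, as refined in Theorem~\ref{thm refineA}); the only substantive point is that its ``$\zeta$-twist'' alternative degenerates once $p$ is an odd prime.

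First I would set up coordinates. Fix a primitive $p$-th root of unity $q$ and write every admissible parameter as a power of $q$, say $r=q^{a}$, $s=q^{b}$ with $a,b\in\mathbb{Z}/p$. For $\mathfrak{u}_{r,s}(\mathfrak{so}_{2n+1})$ to be defined one needs $r\ne s$, i.e. $a\ne b$; then, $p$ being prime, $rs^{-1}=q^{a-b}$ is automatically a primitive $p$-th root of unity, while $|r|,|s|\in\{1,p\}$ forces $\ell:=\mathrm{lcm}(|r|,|s|)=p$. Since $p>3$ we have $\ell\ne 3,4$, so the hypotheses of the type-$B$ Isomorphism Reduction Theorem are met, with $m=\ell=p$ so that Theorem~\ref{thm refineA} applies (here it coincides with the original statement). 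Note also that all these algebras share one and the same dimension, a fixed power of $\ell=p$, so dimension gives no separation and the reduction theorem must do all the work.

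Next I would apply the reduction theorem: it says $\mathfrak{u}_{r,s}(\mathfrak{so}_{2n+1})\cong\mathfrak{u}_{r',s'}(\mathfrak{so}_{2n+1})$ precisely when $(r',s')\in\{(r,s),(-r,-s),(s,r),(-s,-r)\}$, the two ``$\zeta=-1$'' cases carrying the twisted diagonal maps. The key observation is that for odd $p$ the element $-1$ is not a $p$-th root of unity: if $t^p=1$ then $(-t)^p=-t^p=-1\ne 1$. Hence $(r',s')=(-r,-s)$ and $(r',s')=(-s,-r)$ would contradict the standing assumption $r'^p=s'^p=1$ and so never occur among the algebras being compared. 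This leaves $\mathfrak{u}_{r,s}\cong\mathfrak{u}_{r',s'}$ iff $(r',s')=(r,s)$ or $(r',s')=(s,r)$. The forward implication is then immediate; for the converse, $(r',s')=(r,s)$ is the identity and $(r',s')=(s,r)$ is realized by the explicit map in part $(2)$ of the type-$B$ theorem with $\zeta=1$ (the usual $e\leftrightarrow f$ flip $\mathfrak{u}_{r,s}\cong\mathfrak{u}_{s,r}$).

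Finally I would count. The admissible pairs are the $p(p-1)$ pairs $(q^a,q^b)$ with $a\ne b$ in $\mathbb{Z}/p$; the isomorphism relation identifies $(q^a,q^b)$ with $(q^b,q^a)$ and with nothing further, and these two are genuinely distinct since $a\ne b$. Thus the isoclasses correspond bijectively to the two-element subsets of $\mathbb{Z}/p$, of which there are $\binom{p}{2}=\frac{p^2-p}{2}$, as asserted; for $p=5$ this recovers the ten entries of Table~\ref{table B 5}. The main obstacle, such as it is, is purely bookkeeping: recognizing that the $\zeta$-twist family drops out for odd $p$, and carefully checking the hypotheses $\ell=p\ne 3,4$ (which is exactly why $p>3$ is imposed) in all cases, including the boundary ones $r=1$ or $s=1$, where $rs^{-1}$ is still a primitive $p$-th root so that the reduction theorem continues to apply.
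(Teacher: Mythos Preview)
Your proposal is correct and follows exactly the approach the paper intends: the paper does not spell out a proof but simply writes ``By the Isomorphism Reduction Theorem of type $B$, we conclude that\ldots'', and you have supplied precisely the missing details, namely that for an odd prime $p$ the $\zeta=-1$ branch of the type-$B$ reduction theorem is vacuous because $-r,-s$ fail $(-r)^p=(-s)^p=1$, leaving only the swap $(r,s)\leftrightarrow(s,r)$ and hence $\binom{p}{2}$ isoclasses.
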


\begin{remark}
  Assume that $r$ is a primitive $d$th root of unity,
  $s$ is a primitive $d'$th root of unity and $\ell$ is the least common multiple of $d$ and $d'$.
  In the definition of $\mathfrak{u}_{r, s}(\mathfrak{so}_{2n+1})$ \cite{Hu  Wang  JGP 2010},
  we impose the conditions that $r^3\neq s^3, \;r^4\neq s^4$, and $\ell$ is an odd number.
While $\ell$ is typically assumed to be an odd prime, we can extend this to include composite values such as $\ell=9, 15, 21,\cdots$. Leveraging the refined Isomorphism Reduction Theorem and the convex PBW Lyndon Basis Theorem, we can derive many more isoclasses.
\end{remark}

\smallskip
\noindent{\it 3.3. Type $C$\;}

(1) Assume that $q$ is a primitive $5$th root of unity.
According to the Isomorphism Reduction Theorem of type $C$,
we conclude that
\begin{prop}\label{thm C 5}
  Assume that $r^5=s^5=1$.
  Then Table \ref{table C 5} provides a complete classification of $\mathfrak{u}_{r, s}(\mathfrak{sp}_{2n})$.
 \begin{table}
  \centering
  \caption{Isoclasses of type $C_n$ small quantum groups when $q$ is a primitive $5$th root of unity}\label{table C 5}
\begin{tabular}{|c|l|c|l|c|l|c|l|}
  \hline
   1 & $\mathfrak{u}_{1, q}(\mathfrak{sp}_{2n});$& 2 &$\mathfrak{u}_{1, q^2}(\mathfrak{sp}_{2n});$
  &3 &$\mathfrak{u}_{1, q^3}(\mathfrak{sp}_{2n});$ & 4& $\mathfrak{u}_{1, q^4}(\mathfrak{sp}_{2n});$\\ \hline
  5 & $\mathfrak{u}_{q, 1}(\mathfrak{sp}_{2n});$ & 6& $\mathfrak{u}_{q, q^2}(\mathfrak{sp}_{2n});$
  &7 & $\mathfrak{u}_{q, q^3}(\mathfrak{sp}_{2n});$ & 8& $\mathfrak{u}_{q, q^4}(\mathfrak{sp}_{2n});$ \\ \hline
  9 &$\mathfrak{u}_{q^2, 1}(\mathfrak{sp}_{2n});$ &10 &$\mathfrak{u}_{q^2, q}(\mathfrak{sp}_{2n});$
  &11& $\mathfrak{u}_{q^2, q^3}(\mathfrak{sp}_{2n});$ &12& $\mathfrak{u}_{q^2, q^4}(\mathfrak{sp}_{2n});$ \\  \hline
  13 &$\mathfrak{u}_{q^3, 1}(\mathfrak{sp}_{2n});$ &14& $\mathfrak{u}_{q^3, q}(\mathfrak{sp}_{2n});$
  &15 &$\mathfrak{u}_{q^3, q^2}(\mathfrak{sp}_{2n});$  &16& $ \mathfrak{u}_{q^3, q^4}(\mathfrak{sp}_{2n});$\\ \hline
  17 &$\mathfrak{u}_{q^4, 1}(\mathfrak{sp}_{2n});$ &18& $\mathfrak{u}_{q^4, q}(\mathfrak{sp}_{2n});$
  &19 &$\mathfrak{u}_{q^4, q^2}(\mathfrak{sp}_{2n});$ &20& $ \mathfrak{u}_{q^4, q^3}(\mathfrak{sp}_{2n}).$\\ \hline
\end{tabular}
\end{table}
\end{prop}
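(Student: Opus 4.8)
The plan is to read the classification off the Isomorphism Reduction Theorem of type $C$ (the theorem attributed to \cite{R Chen PhD}), once its hypotheses have been checked against every parameter pair occurring in Table \ref{table C 5}.

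First I would pin down which pairs $(r,s)$ with $r^5=s^5=1$ actually define an algebra $\mathfrak{u}_{r,s}(\mathfrak{sp}_{2n})$. Relation $(R4)$ forces $r_i-s_i\neq 0$ for every $i$, that is $r^{d_i}\neq s^{d_i}$; for type $C$ one has $d_i\in\{1,2\}$, so $(r/s)^{d_i}=1$ with $r/s$ a fifth root of unity and $5$ prime already forces $r/s=1$. Hence the admissible pairs are precisely those with $r\neq s$, and writing $r=q^a$, $s=q^b$ with $0\le a\neq b\le 4$ there are exactly $5\cdot 4=20$ of them, namely the twenty entries of Table \ref{table C 5}. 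For each of them $rs^{-1}=q^{a-b}$ has order $5$ (as $a-b$ is not divisible by $5$), hence is a primitive $\ell$th root of unity with $\ell=5\neq 2,3$, and $\mathrm{lcm}(|r|,|s|)=5$; thus the hypotheses of the type $C$ Reduction Theorem (which additionally assumes $n\ge 2$) are satisfied, with $m=\ell$, so no recourse to the refined version is needed.

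Next I would apply that theorem: $\mathfrak{u}_{r,s}(\mathfrak{sp}_{2n})\cong\mathfrak{u}_{r',s'}(\mathfrak{sp}_{2n})$ as Hopf algebras if and only if $(r',s')=(r,s)$ or $(r',s')=(-r,-s)$. The decisive point is that the second alternative never relates two of our twenty candidates: if $r=q^a$, then $-r$ has order $\mathrm{lcm}(2,5)=10$ when $a\neq 0$ and order $2$ when $a=0$, so $-r$ is not a power of $q$ and $(-r,-s)$ is not of the form $(q^c,q^d)$. Consequently the only isomorphism among the twenty candidates is the identity $(r',s')=(r,s)$, so the twenty algebras are pairwise non-isomorphic; since they also exhaust all admissible parameter pairs, Table \ref{table C 5} is a complete classification.

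I do not anticipate a real obstacle: the argument is bookkeeping on top of the cited Reduction Theorem. The two points that need a little care are (i) confirming that $r\neq s$ is exactly the well-definedness condition, so the listed twenty pairs genuinely exhaust all candidates, and (ii) recording that $-1$ is absent from the group of fifth roots of unity, which is precisely what annihilates the nontrivial branch $(-r,-s)$ and leaves every isoclass a singleton.
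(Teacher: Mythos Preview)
Your proposal is correct and follows essentially the same approach as the paper, which simply invokes the type $C$ Isomorphism Reduction Theorem without further detail; you merely make explicit the two observations the paper leaves implicit, namely that all twenty pairs with $r\neq s$ meet the theorem's hypotheses (since $rs^{-1}$ has order $5\neq 2,3$), and that the alternative branch $(r',s')=(-r,-s)$ is vacuous because $-1$ is not a fifth root of unity.
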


(2) More generally, assume that $q$ is a primitive $p$th root of unity, where $p$ is an odd prime with $p>3$.
By the Isomorphism Reduction Theorem of type $C$,
we conclude that
\begin{theorem} \label{thm C general}
 Assume that $r^p=s^p=1$, where $p$ is an odd prime with $p>3$.
 Then $\mathfrak{u}_{r, s}(\mathfrak{sp}_{2n})\cong \mathfrak{u}_{r', s'}(\mathfrak{sp}_{2n})$
 if and only if $(r, s)=(r', s')$.
 In this case, there are $p^2-p$ isoclasses.
\end{theorem}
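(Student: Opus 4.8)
The plan is to count the pairs $(r,s)$ with $r^p = s^p = 1$ and then quotient by the isomorphism relation given by the Isomorphism Reduction Theorem of type $C$ (Theorem~\ref{thm C general}'s predecessor among Theorems~\ref{thm isoA}--\ref{thm isoG2}, in the refined form of Theorem~\ref{thm refineA}). First I would record that, since $p$ is an odd prime with $p > 3$, writing $q$ for a fixed primitive $p$th root of unity, every admissible pair has the form $(r,s) = (q^a, q^b)$ with $a, b \in \mathbb{Z}/p\mathbb{Z}$. I must then check that each such pair genuinely gives a well-defined restricted two-parameter quantum group of the correct type: this requires $\mathrm{lcm}(|r|,|s|) = \ell$ for a common $\ell$, and that $rs^{-1} = q^{a-b}$ is a primitive $m$th root of unity with $m \neq 2, 3$ so that the hypotheses of the type $C$ reduction theorem (and its refinement) apply. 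Since $p > 3$ is prime, $q^{a-b}$ is either $1$ (when $a = b$) or a primitive $p$th root of unity, and $p \neq 2, 3$, so in all cases the refined theorem is in force; moreover the relevant $\ell$ is $p$ unless $a = b = 0$, which is excluded because then $r = s = 1$ is not a root of unity of the required order — so effectively $a \neq b$ is not required, but one must handle the diagonal $a = b$ carefully (there $|r| = |s| = p$ or $r = s = 1$). Let me instead simply take all $(a,b) \in (\mathbb{Z}/p\mathbb{Z})^2$, giving $p^2$ pairs, and subtract those which fail to define an object in our family; by convention in this paper the parameters are roots of unity of the stated order, so I would state the count over the $p^2 - 1$ nontrivial pairs if $r = s = 1$ is excluded, or over all $p^2$ if the trivial case is folded in — the assertion $p^2 - p$ suggests the right bookkeeping is to count pairs $(r,s)$ with $r \ne s$ both nontrivial plus the diagonal, then mod out.

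The heart of the argument is the second step: invoking the type $C$ Isomorphism Reduction Theorem in its refined form. That theorem states $\mathfrak{u}_{r,s}(\mathfrak{sp}_{2n}) \cong \mathfrak{u}_{r',s'}(\mathfrak{sp}_{2n})$ as Hopf algebras if and only if $(r',s') = (r,s)$ or $(r',s') = (-r,-s)$, under the hypothesis that $rs^{-1}$ and $r's'^{-1}$ are primitive $m$th roots of unity with $m \neq 2,3$. Here $-1 = q^{p \cdot \text{(something)}}$? No: $-1$ is a primitive second root of unity, and since $p$ is odd, $-1 \notin \langle q \rangle$ unless we adjoin it. The key observation is that $(-r, -s)$ has $-r$ of order $2p$ (if $|r| = p$) or order $2$ (if $r = 1$), so $\mathrm{lcm}(|{-r}|, |{-s}|) = 2p \neq p = \ell$. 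Hence $\mathfrak{u}_{-r,-s}(\mathfrak{sp}_{2n})$ lies outside our family entirely — within the set of parameters satisfying $r^p = s^p = 1$, the twist $(r,s) \mapsto (-r,-s)$ never maps back into the family. Therefore the isomorphism relation restricted to our $p^2$-element (or $(p^2-1)$-element) parameter set is the identity: distinct admissible pairs give non-isomorphic small quantum groups. I would make this precise by noting: if $(q^a, q^b)$ and $(q^c, q^d)$ give isomorphic algebras, Theorem~\ref{thm refineA} forces $(q^c, q^d) \in \{(q^a,q^b), (-q^a, -q^b)\}$; the second is impossible since $-q^a$ does not satisfy $x^p = 1$ for odd $p$ (as $(-q^a)^p = -q^{ap} = -1 \neq 1$); hence $(q^c,q^d) = (q^a,q^b)$, i.e. $c \equiv a$, $d \equiv b \pmod p$.

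The final step is the enumeration itself. If the convention includes the pair $(1,1)$ as a degenerate member (or if one counts over all $(a,b) \in (\mathbb{Z}/p)^2$ that yield a genuine object with $\ell = p$), the count is $p^2 - p$: indeed the pairs with $a = b$ all have $rs^{-1} = 1$, which is a primitive $1$st root of unity, and then $\mathfrak{u}_{r,s}(\mathfrak{sp}_{2n})$ with $r = s$ is excluded from this family (the defining relation $(R4)$ has $r_i - s_i = 0$ in the denominator), removing exactly $p$ pairs, leaving $p^2 - p$; by the previous paragraph each surviving pair is its own isoclass, so there are exactly $p^2 - p$ isoclasses. I would spell out that the $p$ excluded diagonal pairs are precisely $(q^k, q^k)$ for $k = 0, 1, \dots, p-1$, and that all other $p^2 - p$ pairs have $q^{a - b}$ a primitive $p$th root of unity with $p \ne 2, 3$, so the refined type $C$ theorem applies verbatim. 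The main obstacle — really the only subtle point — is the bookkeeping around the $\pm$ twist and the degenerate diagonal: one must verify carefully that the twist $(r,s) \mapsto (-r,-s)$ leaves the family of $p$-power-order parameters, which hinges precisely on $p$ being odd, and confirm that the exclusion of $r = s$ accounts for exactly the $p$ missing pairs. Everything else is a direct application of an already-established reduction theorem plus a trivial count.
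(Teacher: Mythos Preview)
Your approach is correct and matches the paper's: apply the type $C$ Isomorphism Reduction Theorem, observe that the alternative $(r',s')=(-r,-s)$ falls outside the family of $p$th roots of unity since $(-r)^p=-1\neq 1$ for odd $p$, and then count the $p^2-p$ off-diagonal pairs $(q^a,q^b)$ with $a\neq b$. The paper itself offers no proof beyond the phrase ``By the Isomorphism Reduction Theorem of type $C$, we conclude that,'' so your explicit justification of why the $\pm$-twist is vacuous here and why exactly $p$ diagonal pairs are excluded is precisely the unpacking that the paper leaves to the reader.
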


\begin{remark}
 In the definition of $\mathfrak{u}_{r, s}(\mathfrak{sp}_{2n})$ \cite{R Chen PhD},
 we impose the conditions that $r^3\neq s^3, \;r^4\neq s^4$, and $\ell$ is an odd number.
 While $\ell$ is typically assumed to be an odd prime, we can extend this to include composite values such as
$\ell=9, 15, 21,\cdots$. Leveraging the refined Isomorphism Reduction Theorem and the convex PBW Lyndon Basis Theorem, we can derive many more isoclasses.
\end{remark}

\smallskip
\noindent{\it 3.4. Type $D$\;}
(1)  Assume that $q$ is a primitive $5$th root of unity.
Since $5$ is a prime, then for all of these candidates,
$rs^{-1}$ is a primitive $5$th root of unity.
According to the Isomorphism Reduction Theorem, we have the following
\begin{prop}\label{thm D 5}
Assume that $r^5=s^5=1$.
Then Table \ref{table D 5} provides a complete classification of $\mathfrak{u}_{r, s}(\mathfrak{so}_{2n})$.
\begin{table}[h]
  \centering
  \caption{Isoclasses of type $D$ small quantum groups when $q$ is a primitive $5$th root of unity}\label{table D 5}
\begin{tabular}{|c|l|c|l|}
  \hline
   1 & $\mathfrak{u}_{1, q}(\mathfrak{so}_{2n})\cong \mathfrak{u}_{q^4, 1}(\mathfrak{so}_{2n});$
  &7 &$\mathfrak{u}_{1, q^3}(\mathfrak{so}_{2n})\cong \mathfrak{u}_{q^2, 1}(\mathfrak{so}_{2n});$\\ \hline
  2 & $\mathfrak{u}_{q, q^2}(\mathfrak{so}_{2n})\cong \mathfrak{u}_{q^3, q^4}(\mathfrak{so}_{2n});$
  &8 & $\mathfrak{u}_{q, q^4}(\mathfrak{so}_{2n})=\mathfrak{u}_{q,q^{-1}}(\mathfrak{so}_{2n});$\\ \hline
  3 &$\mathfrak{u}_{q^2, q^3}(\mathfrak{so}_{2n});$
  &9 &$\mathfrak{u}_{q^3, q}(\mathfrak{so}_{2n})\cong \mathfrak{u}_{q^4, q^2}(\mathfrak{so}_{2n});$\\  \hline
  4 &$\mathfrak{u}_{1, q^2}(\mathfrak{so}_{2n})\cong \mathfrak{u}_{q^3, 1}(\mathfrak{so}_{2n});$
  &10 &$\mathfrak{u}_{1, q^4}(\mathfrak{so}_{2n})\cong \mathfrak{u}_{q, 1}(\mathfrak{so}_{2n});$\\ \hline
  5 &$\mathfrak{u}_{q, q^3}(\mathfrak{so}_{2n})\cong \mathfrak{u}_{q^2, q^4}(\mathfrak{so}_{2n});$
  &11 &$\mathfrak{u}_{q^2, q}(\mathfrak{so}_{2n})\cong \mathfrak{u}_{q^4, q^3}(\mathfrak{so}_{2n});$\\ \hline
  6 &$\mathfrak{u}_{q^4, q}(\mathfrak{so}_{2n})=\mathfrak{u}_{q^{-1}, q}(\mathfrak{so}_{2n});$
  &12&$\mathfrak{u}_{q^3, q^2}(\mathfrak{so}_{2n}).$\\ \hline
\end{tabular}
\end{table}
\end{prop}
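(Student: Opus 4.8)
The plan is to read the classification directly off the Isomorphism Reduction Theorem for type $D$ (cf.\ \cite{Bai Xiaotang 2008}): for $n\geq 4$ and parameters whose ratios $rs^{-1}$ and $r'(s')^{-1}$ are primitive $\ell$th roots of unity with $\ell\neq 2$, one has $\mathfrak{u}_{r,s}(\mathfrak{so}_{2n})\cong\mathfrak{u}_{r',s'}(\mathfrak{so}_{2n})$ as Hopf algebras if and only if $(r',s')=(r,s)$ or $(r',s')=(s^{-1},r^{-1})$. First I would fix the parametrization: since $q$ is a primitive $5$th root of unity and $r^5=s^5=1$, write $r=q^a$ and $s=q^b$ with $a,b\in\mathbb{Z}/5\mathbb{Z}$; the algebra is defined exactly when $r\neq s$, i.e.\ $a\neq b$, so there are $25-5=20$ candidate pairs. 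Because $5$ is prime, each ratio $rs^{-1}=q^{a-b}$ has order exactly $5$; hence the hypotheses of the reduction theorem hold with $\ell=5$ for every candidate, and the theorem applies to compare any two of them (no appeal to the refined version Theorem~\ref{thm refineA} is needed here).

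Next I would recast the isomorphism criterion as an equivalence relation on the index set $X=\{(a,b)\in(\mathbb{Z}/5\mathbb{Z})^2 : a\neq b\}$. Since $s^{-1}=q^{-b}$ and $r^{-1}=q^{-a}$, this relation is generated by the involution $\iota\colon(a,b)\mapsto(-b,-a)$, so the isoclasses of $\mathfrak{u}_{r,s}(\mathfrak{so}_{2n})$ are in bijection with the $\iota$-orbits on $X$. The fixed points of $\iota$ satisfy $a+b\equiv 0\pmod 5$; as $5$ is odd, the constraint $a\neq b$ then forces $a\not\equiv 0$, so the fixed points are precisely $(1,4),(4,1),(2,3),(3,2)$, contributing $4$ singleton classes. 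The remaining $16$ elements of $X$ split into $8$ orbits of size $2$. Therefore there are $4+8=12$ isoclasses.

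Finally I would match the twelve orbits with the twelve rows of Table~\ref{table D 5}: for instance $\{(0,1),(4,0)\}$ is row~$1$ ($\mathfrak{u}_{1,q}\cong\mathfrak{u}_{q^4,1}$), $\{(1,2),(3,4)\}$ is row~$2$, the fixed point $(2,3)$ is row~$3$, $\{(2,1),(4,3)\}$ is row~$11$, and the fixed point $(3,2)$ is row~$12$; in particular $\mathfrak{u}_{q,q^4}=\mathfrak{u}_{q,q^{-1}}$ (row~$8$) and $\mathfrak{u}_{q^4,q}=\mathfrak{u}_{q^{-1},q}$ (row~$6$) are genuinely distinct isoclasses because $q^2\neq 1$. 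There is no serious obstacle in this argument: it is a direct application of the reduction theorem followed by a finite orbit count, and the only points demanding care are the routine bookkeeping that places each of the $20$ pairs in its correct orbit and the standing hypothesis $n\geq 4$, which excludes the degenerate isomorphisms $D_2\cong A_1\times A_1$ and $D_3\cong A_3$.
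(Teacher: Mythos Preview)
Your argument is correct and is precisely the approach the paper takes: the text simply says ``According to the Isomorphism Reduction Theorem, we have the following'' and lists Table~\ref{table D 5}, relying on Theorem~2.6 together with the observation that for a prime $\ell=5$ every ratio $rs^{-1}$ with $r\neq s$ is automatically a primitive $5$th root of unity. Your explicit orbit count under $(a,b)\mapsto(-b,-a)$ merely spells out what the paper leaves implicit.
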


(2) More generally, when $q$ is a primitive root of unity with odd prime order $p$ with $p>3$.
By the Isomorphism Reduction Theorem of type $D$, we have the following
 \begin{theorem}\label{thm D general}
 Assume that $r^p=s^p=1$, where $p$ is an odd prime with $p>3$.
 Then Table \ref{table D general} gives the complete classification of $\mathfrak{u}_{r, s}(\mathfrak{so}_{2n})$.
 In this case, there are $\frac{p^2-1}{2}$ isoclasses.
\begin{table}[h]
  \centering
  \caption{Isoclasses of type $D$ small  quantum groups when $q$ is a primitive root of unity with odd prime order $p$ }\label{table D general}
  \begin{tabular}{|l|}
    \hline
      $\mathfrak{u}_{q^k, q^{k+t}}(\mathfrak{so}_{2n})\cong \mathfrak{u}_{q^{p-k-t}, q^{p-k}}(\mathfrak{so}_{2n});\;
      (0<t\leq p-1, 0\leq k \leq p-1, t, k\in \mathbb{Z})$\\ \hline
  \end{tabular}
  \end{table}
\end{theorem}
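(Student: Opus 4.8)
The plan is to deduce the classification directly from the type-$D$ Isomorphism Reduction Theorem recalled in \S2.2 (the one attributed to \cite{Bai Xiaotang 2008}), followed by an elementary orbit count. In contrast to the type-$A$ situation, no dimension comparison enters: since $p$ is prime, every algebra in the list has $\ell=p$, so all of them have the same dimension, and the reduction theorem alone suffices.

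First I would coordinatize. Fix a primitive $p$th root of unity $q$; since $r^p=s^p=1$ we may write $r=q^k$ and $s=q^{k+t}$ with $k,t\in\mathbb Z$, and the algebra $\mathfrak u_{r,s}(\mathfrak{so}_{2n})$ is considered for $r\neq s$, i.e. $t\not\equiv 0\pmod p$ — which is exactly the range $0<t\leq p-1$, $0\leq k\leq p-1$ of Table~\ref{table D general}. Because $p$ is a prime $>3$, $rs^{-1}=q^{-t}$ is a primitive $p$th root of unity with $p\neq 2$, so for every admissible pair the hypotheses of the Isomorphism Reduction Theorem for $\mathfrak{so}_{2n}$ ($n\geq 4$, and $rs^{-1},\,r's'^{-1}$ primitive $\ell$th roots of unity with $\ell=p\neq 2$) are met; alternatively one could route this through Theorem~\ref{thm refineA}, though it is not needed here.

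Then I would apply that theorem: $\mathfrak u_{r,s}(\mathfrak{so}_{2n})\cong\mathfrak u_{r',s'}(\mathfrak{so}_{2n})$ as Hopf algebras if and only if $(r',s')=(r,s)$ or $(r',s')=(s^{-1},r^{-1})$. Writing $(r,s)=(q^k,q^{k+t})$, the second alternative becomes $(r',s')=(q^{-(k+t)},q^{-k})=(q^{\,p-k-t},q^{\,p-k})$, which is precisely the isomorphism displayed in Table~\ref{table D general}; hence all the listed isomorphisms are genuine and there are no others. It follows that the isoclasses correspond bijectively to the orbits of the order-two involution $\tau\colon(r,s)\mapsto(s^{-1},r^{-1})$ on the set $X=\{(r,s):r,s\in\mu_p,\ r\neq s\}$ of admissible parameter pairs, $\mu_p$ being the group of $p$th roots of unity.

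Finally I would count those orbits by Burnside's lemma. One has $|X|=p^2-p$, and $(r,s)\in X$ is fixed by $\tau$ precisely when $s=r^{-1}$ and $r\neq r^{-1}$, i.e. when $r\in\mu_p\setminus\{1\}$ (as $p$ is odd, $r^2=1$ forces $r=1$); thus $\tau$ has exactly $p-1$ fixed points and the number of isoclasses equals
\begin{equation*}
\frac12\bigl(|X|+|\mathrm{Fix}(\tau)|\bigr)=\frac12\bigl((p^2-p)+(p-1)\bigr)=\frac{p^2-1}{2},
\end{equation*}
as claimed; taking $p=5$ recovers the $12$ classes of Proposition~\ref{thm D 5}. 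I do not expect a serious obstacle: the reduction theorem performs all the structural work, and the only points needing a little care are that its hypotheses continue to hold when one of $r,s$ equals $1$ (they do, since then $\ell=\mathrm{lcm}(1,p)=p\neq 2$) and the routine verification that, as $(k,t)$ ranges over the indicated set, Table~\ref{table D general} exhausts every $\tau$-orbit.
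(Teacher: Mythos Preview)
Your proposal is correct and follows essentially the same approach as the paper: the paper states the result as an immediate consequence of the type-$D$ Isomorphism Reduction Theorem (Theorem~\ref{thm isoA}--\ref{thm isoG2}, specifically the one from \cite{Bai Xiaotang 2008}) and leaves the proof implicit, whereas you spell out the orbit structure and close with the Burnside count $\tfrac12\bigl((p^2-p)+(p-1)\bigr)=\tfrac{p^2-1}{2}$. The only small addition worth making explicit is that the definitional constraint $r^2\neq s^2$ (see the remark following the theorem) is automatic here since $p$ is odd and hence $-1\notin\mu_p$, so $r\neq s$ already forces $r^2\neq s^2$.
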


\begin{remark}
 In the definition of $\mathfrak{u}_{r, s}(\mathfrak{so}_{2n})$ \cite{Bai Xiaotang 2008},
  we impose the conditions that $r^2\neq s^2$, and $\ell$ is an odd number.
While $\ell$ is typically assumed to be an odd prime, we extend this to include composite values such as
 $\ell=9, 15, 21,\cdots$. Leveraging the refined Isomorphism Reduction Theorem and the convex PBW Lyndon Basis Theorem, we can derive many more isoclasses.
\end{remark}

\smallskip
\noindent{\it 3.5. Type $F_4$\;}
(1) Assume that $q$ is a primitive $5$th root of unity.
According to the Isomorphism Reduction Theorem of type $F_4$, we conclude that
\begin{prop}\label{thm F 5}
Assume that $r^5=s^5=1$.
Then Table \ref{table F 5} provides a complete classification of $\mathfrak{u}_{r, s}(\mathfrak{so}_{2n})$.
\begin{table}[h]
  \centering
  \caption{Isoclasses of type $F_4$ small quantum groups when $q$ is a primitive $5$th root of unity}\label{table F 5}
\begin{tabular}{|c|l|c|l|}
  \hline
  1 & $\mathfrak{u}_{1, q}(F_4)\cong \mathfrak{u}_{q, 1}(F_4);$
  &6 & $\mathfrak{u}_{q, q^3}(F_4)\cong \mathfrak{u}_{q^3, q}(F_4);$\\ \hline
  2 & $\mathfrak{u}_{q, q^2}(F_4)\cong \mathfrak{u}_{q^2, q}(F_4);$
  &7 & $\mathfrak{u}_{q^2, q^4}(F_4)\cong \mathfrak{u}_{q^4, q^2}(F_4);$\\ \hline
  3 & $\mathfrak{u}_{q^3, q^2}(F_4)\cong \mathfrak{u}_{q^2, q^3}(F_4);$
  &8& $\mathfrak{u}_{q^3, 1}(F_4)\cong \mathfrak{u}_{1, q^3}(F_4);$  \\  \hline
  4 & $\mathfrak{u}_{q^4, q^3}(F_4)\cong \mathfrak{u}_{q^3, q^4}(F_4);$
  &9 & $\mathfrak{u}_{q^4, q}(F_4)\cong \mathfrak{u}_{q, q^4}(F_4);$\\ \hline
  5 & $\mathfrak{u}_{1, q^2}(F_4)\cong \mathfrak{u}_{q^2, 1}(F_4);$
  &10 & $\mathfrak{u}_{q^4, 1}(F_4)\cong \mathfrak{u}_{1, q^4}(F_4).$\\ \hline
\end{tabular}
\end{table}
\end{prop}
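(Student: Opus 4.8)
The plan is to obtain the table as a routine orbit count once the Isomorphism Reduction Theorem of type $F_4$ (\cite{Chen Hu and Wang to appear}) is brought to bear, so that the argument is essentially bookkeeping rather than a new computation.

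First I would pin down the admissible parameter set. Since $q$ is a primitive $5$th root of unity, $r^5=s^5=1$ forces $r,s\in\langle q\rangle=\{1,q,q^2,q^3,q^4\}$, and the case $r=s$ must be excluded because then $r_i-s_i=0$ renders relation $(R4)$ meaningless. Hence $r\neq s$, so at least one of $r,s$ is a primitive $5$th root of unity; consequently $\mathrm{lcm}(|r|,|s|)=\ell=5$ and $rs^{-1}$ is itself a primitive $5$th root of unity. This leaves exactly $5\cdot4=20$ admissible ordered pairs $(r,s)$, all of which yield algebras of one and the same dimension, so a dimension argument cannot separate isoclasses here; and since $m=\ell=5$, the refined version (Theorem \ref{thm refineA}) coincides with the original theorem and brings nothing new.

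Next I would invoke the $F_4$ theorem itself. For any two admissible pairs $(r,s)$ and $(r',s')$, both $rs^{-1}$ and $r's'^{-1}$ are primitive $\ell$th roots of unity with $\ell=5\notin\{3,4\}$, so its hypotheses are met, and it asserts that $\mathfrak{u}_{r,s}(F_4)\cong\mathfrak{u}_{r',s'}(F_4)$ as Hopf algebras exactly when $(r',s')\in\{(r,s),(s,r),(-r,-s),(-s,-r)\}$, the four cases $\zeta(r,s)$ and $\zeta(s,r)$ with $\zeta=\pm1$. The one step I would flag as needing care is the following: because $5$ is odd we have $(-r)^5=-r^5=-1\neq1$, so the pairs $(-r,-s)$ and $(-s,-r)$ violate $r^5=s^5=1$ and lie outside the family being classified. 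Thus the $\zeta=-1$ symmetries cannot secretly identify two rows of the table, and, restricted to the $20$ admissible pairs, the isomorphism relation is generated by the single flip $(r,s)\mapsto(s,r)$.

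Finally, that flip has no fixed points among pairs with $r\neq s$, so it splits the $20$ pairs into $20/2=10$ orbits of size $2$, and one representative per orbit is precisely one row of Table \ref{table F 5}. Completeness is the enumeration just carried out, while the pairwise non-isomorphy of distinct rows follows from the ``only if'' direction of the theorem together with the parity observation above. I do not anticipate a genuine obstacle; the only points demanding vigilance are making that parity observation airtight and confirming that the ten displayed rows really exhaust $\{(r,s)\in\langle q\rangle^2 : r\neq s\}$, and both are immediate.
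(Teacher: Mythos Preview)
Your proposal is correct and takes essentially the same approach as the paper, which simply invokes the Isomorphism Reduction Theorem of type $F_4$ without further comment. You have in fact supplied more detail than the paper does, in particular the parity observation that $(-r)^5=-1$ forces the $\zeta=-1$ branch outside the $5$th-root family, which is exactly the mechanism behind the count $\frac{p^2-p}{2}=10$ stated immediately afterward in Theorem~\ref{thm F general}.
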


(2) More generally, assume that $q$ is a primitive $p$th root of unity,
where $p$ is an odd prime with $p>3$.
By the Isomorphism Reduction Theorem of type $F_4$, we conclude that
  \begin{theorem}\label{thm F general}
 Assume that $r^p=s^p=1$, where $p$ is an odd prime with $p>3$.
  Then $\mathfrak{u}_{r, s}(F_4)\cong \mathfrak{u}_{r', s'}(F_4)$
if and only if $(r, s)=(r', s')$ or $(r, s)=(s', r' )$.
As a result, there are $\frac{p^2-p}{2}$ isoclasses.
\end{theorem}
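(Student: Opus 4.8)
The plan is to deduce the classification directly from the Isomorphism Reduction Theorem of type $F_4$ (\cite{Chen Hu and Wang to appear}), after checking that its hypotheses are automatically met in the present prime-order situation. First I would record the following elementary observations. If $r^p=s^p=1$ with $p$ an odd prime and $p>3$, then for $\mathfrak{u}_{r,s}(F_4)$ to belong to the scope of that theorem one needs $r\neq s$ (otherwise $rs^{-1}=1$ is not a primitive root of unity and the algebra falls outside the setup), and in that case $rs^{-1}$ is a nontrivial $p$-th root of unity, hence --- $p$ being prime --- a \emph{primitive} $p$-th root of unity; likewise $\mathrm{lcm}(|r|,|s|)=p\notin\{3,4\}$, and the same applies to $(r',s')$. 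Thus the hypotheses of the type-$F_4$ reduction theorem hold, and in particular no appeal to the refined version (Theorem \ref{thm refineA}) is required here.

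Next I would invoke that theorem: $\mathfrak{u}_{r,s}(F_4)\cong\mathfrak{u}_{r',s'}(F_4)$ as Hopf algebras if and only if $(r',s')=\zeta(r,s)$ or $(r',s')=\zeta(s,r)$ for some square root of unity $\zeta\in\{1,-1\}$. The decisive point is that $\zeta=-1$ cannot occur: since $p$ is odd, $(-r)^p=-r^p=-1\neq 1$, so the pairs $(-r,-s)$ and $(-s,-r)$ violate the standing hypothesis $r'^p=s'^p=1$. Hence only $\zeta=1$ survives, and the isomorphism holds precisely when $(r',s')=(r,s)$ or $(r',s')=(s,r)$, which is the asserted criterion. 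Finally, the count: the criterion identifies the set of isoclasses with the set of unordered pairs $\{r,s\}$ of distinct $p$-th roots of unity, and there are $\binom{p}{2}=\frac{p^2-p}{2}$ of these.

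As for the main obstacle, there is essentially no difficult step once the type-$F_4$ reduction theorem is in hand; the substance is the bookkeeping of hypotheses. The two places that require care are (i) verifying that every relevant $\mathfrak{u}_{r,s}(F_4)$ genuinely satisfies the input conditions of that theorem --- primitivity of $rs^{-1}$ of the correct order, together with $\ell\neq 3,4$ --- which is exactly where the assumption ``$p$ an odd prime with $p>3$'' is used; and (ii) the elimination of the sign $\zeta=-1$, which is where the oddness of $p$ enters a second time. I would also note in passing that the same argument shows $\mathrm{lcm}(|r|,|s|)$ and the order of $rs^{-1}$ are here forced to coincide, so the subtleties addressed by Corollaries \ref{cor isoA} and \ref{cor2 isoA} do not arise in this prime-order case.
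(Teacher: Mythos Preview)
Your argument is correct and follows the same approach as the paper, which simply invokes the Isomorphism Reduction Theorem of type $F_4$ without further detail. Your explicit verification that the original (non-refined) reduction theorem applies because $rs^{-1}$ is automatically a primitive $p$th root of unity, and your elimination of $\zeta=-1$ via the oddness of $p$, spell out exactly the bookkeeping the paper leaves implicit.
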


\begin{remark}
   In the definition of $\mathfrak{u}_{r, s}(F_4)$ \cite{Chen Hu and Wang to appear},
   we impose the conditions that $ r^3\neq s^3, \;r^4\neq s^4$, and $\ell$ is an odd number.
   Additionally, while $\ell$ is typically assumed to be an odd prime, we extend this to include composite values such as
$\ell=9, 15, 21,\cdots$. In principle, leveraging the refined Isomorphism Reduction Theorem, we are able to derive many more isoclasses. However, due to the enormous computational complexity, we are confined to our current assumptions.
\end{remark}

\smallskip
\noindent{\it 3.6. Type $G_2$\;}
(1)  Assume $q$ is a primitive $p$th root of unity, where $p$ is an odd prime with $p>3$.
According to the Isomorphism Reduction Theorem of type $G_2$, we conclude that
\begin{theorem}\label{thm G general}
   Assume that $r^p=s^p=1$, where $p$ is an odd prime with $p>3$.
    Then $\mathfrak{u}_{r, s}(G_2)\cong \mathfrak{u}_{r', s'}(G_2)$
if and only if $(r, s)=(r', s')$ or $(r, s)=(s', r' )$.
As a result, there are $\frac{p^2-p}{2}$ isoclasses.
\end{theorem}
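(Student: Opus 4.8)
The plan is to deduce the statement directly from the Isomorphism Reduction Theorem for type $G_2$ (Theorem \ref{thm isoG2}), the only extra ingredient being an arithmetic observation that kills the ``$3$rd root of unity'' ambiguity $\zeta$ occurring there. Fix a primitive $p$th root of unity $q$. Since $p>3$ is prime, every $p$th root of unity is either $1$ or primitive of order $p$. The defining relation $(R4)$ requires $r_i\neq s_i$ for all $i$, which for $G_2$ (where $\{d_i\}=\{1,3\}$) means $r\neq s$ and $r^3\neq s^3$; because $\gcd(3,p)=1$ the cubing map is a bijection on the group of $p$th roots of unity, so both conditions reduce to the single requirement $r\neq s$. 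In particular $rs^{-1}$ is then a $p$th root of unity different from $1$, hence automatically primitive of order $p$; likewise for $r's'^{-1}$. Thus the hypotheses of Theorem \ref{thm isoG2} are satisfied with $\ell=p\neq 4,6$ (and no appeal to the refined version, Theorem \ref{thm refineA}, is needed here).

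Next I would invoke Theorem \ref{thm isoG2}: an isomorphism $\varphi\colon \mathfrak{u}_{r,s}(G_2)\cong \mathfrak{u}_{r',s'}(G_2)$ of Hopf algebras exists if and only if $(r',s')=\zeta(r,s)$ (with $\varphi$ of the explicit diagonal shape) or $(r',s')=\zeta(s,r)$ (with $\varphi$ of the ``flip'' shape), for some $\zeta$ with $\zeta^3=1$. The crux of the argument is that such a $\zeta$ must itself be a $p$th root of unity: from $r'=\zeta r$ (respectively $r'=\zeta s$) together with $r^p=s^p=r'^p=1$ one gets $\zeta^p=1$, and since also $\zeta^3=1$ while $\gcd(3,p)=1$, it follows that $\zeta=1$. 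Hence, in the present setting, Theorem \ref{thm isoG2} simplifies to the clean equivalence $\mathfrak{u}_{r,s}(G_2)\cong \mathfrak{u}_{r',s'}(G_2)$ if and only if $(r',s')=(r,s)$ or $(r',s')=(s,r)$.

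Finally I would count isoclasses. The admissible parameter pairs are exactly the ordered pairs $(r,s)$ of $p$th roots of unity with $r\neq s$; there are $p^2-p$ of them, and each satisfies $\mathrm{lcm}(|r|,|s|)=p$ (if both orders were $1$ the entries would coincide). Among distinct admissible pairs the only identification is the transposition $(r,s)\leftrightarrow(s,r)$, which is fixed-point-free on this set precisely because $r\neq s$; therefore the isoclasses correspond bijectively to the $\tfrac{p^2-p}{2}$ two-element orbits, as claimed. The only point requiring genuine care is the bidirectional bookkeeping: one must check that each listed pair really does yield an isomorphism (this is the ``if'' direction of Theorem \ref{thm isoG2}, valid for arbitrary $a_i\in\mathbb{K}^\ast$) and that no two pairs lying in different orbits can be identified (the ``only if'' direction, now that $\zeta$ has been pinned to $1$). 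I do not expect any deeper obstacle: the substantive content — the classification of all Hopf-algebra isomorphisms between these quantum groups — is supplied wholesale by Theorem \ref{thm isoG2}, and the coprimality $\gcd(3,p)=1$ does the remaining work.
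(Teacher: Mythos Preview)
Your proposal is correct and follows exactly the route the paper takes: the paper simply states that the result follows ``according to the Isomorphism Reduction Theorem of type $G_2$'' (Theorem~\ref{thm isoG2}) without spelling out details, and your argument makes explicit precisely the two points that are implicit there---that $rs^{-1}$ is automatically a primitive $p$th root of unity when $p$ is prime and $r\neq s$, and that the $\zeta$ in Theorem~\ref{thm isoG2} is forced to equal $1$ by $\gcd(3,p)=1$. Your counting of the $\tfrac{p^2-p}{2}$ orbits is also the intended one.
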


(2) Assume that $q$ is an primitive $8$th root of unity,
using the refined Isomorphism Reduction Theorem of type $G_2$, we have the following
\begin{theorem}\label{thm G 8}
Assume that $r^8=s^8=1$.
Then table \ref{table G 8} provides a complete classification of $\mathfrak{u}_{r, s}(G_2)$.
\begin{table}[h]
  \centering
  \caption{Isoclasses of type $G_2$ small quantum groups when $q$ is a primitive $8$th root of unity}\label{table G 8}
\begin{tabular}{|c|l|c|l|}
  \hline
   1 & $\mathfrak{u}_{1, q}(G_2)\cong \mathfrak{u}_{q, 1}(G_2);$
  &9 & $\mathfrak{u}_{1, q^3}(G_2)\cong \mathfrak{u}_{q^3, 1}(G_2);$ \\ \hline
  2 & $\mathfrak{u}_{q, q^2}(G_2)\cong \mathfrak{u}_{q^2, q}(G_2);$
  &10 & $\mathfrak{u}_{q, q^4}(G_2)\cong \mathfrak{u}_{q^4, 1}(G_2);$ \\ \hline
  3 & $\mathfrak{u}_{q^2, q^3}(G_2)\cong \mathfrak{u}_{q^3, q^2}(G_2);$
  &11 & $\mathfrak{u}_{q^2, q^5}(G_2)\cong \mathfrak{u}_{q^5, q^2}(G_2);$ \\  \hline
  4 & $\mathfrak{u}_{q^3, q^4}(G_2)\cong \mathfrak{u}_{q^4, q^3}(G_2);$
  &12 & $\mathfrak{u}_{q^3, q^6}(G_2)\cong \mathfrak{u}_{q^6, q^3}(G_2);$\\ \hline
  5 & $\mathfrak{u}_{q^4, q^5}(G_2)\cong \mathfrak{u}_{q^5, q^4}(G_2);$
  &13 & $\mathfrak{u}_{q^4, q^7}(G_2)\cong \mathfrak{u}_{q^7, q^4}(G_2);$ \\ \hline
  6 & $\mathfrak{u}_{q^5, q^6}(G_2)\cong \mathfrak{u}_{q^6, q^5}(G_2);$
  &14 & $\mathfrak{u}_{q^5, 1}(G_2)\cong \mathfrak{u}_{1, q^5}(G_2);$\\ \hline
  7 & $\mathfrak{u}_{q^6, q^7}(G_2)\cong \mathfrak{u}_{q^7, q^6}(G_2);$
  &15 & $\mathfrak{u}_{q^6, q}(G_2)\cong \mathfrak{u}_{q, q^6}(G_2);$ \\ \hline
  8 & $\mathfrak{u}_{q^7, 1}(G_2)\cong \mathfrak{u}_{1, q^7}(G_2);$
  &16 & $\mathfrak{u}_{q^7, q^2}(G_2)\cong \mathfrak{u}_{q^2, q^7}(G_2).$ \\ \hline
\end{tabular}
\end{table}
\end{theorem}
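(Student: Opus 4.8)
The plan is to reduce the classification to an enumeration of the admissible parameter pairs and then to apply the refined Isomorphism Reduction Theorem of type $G_2$. First I would pin down which pairs $(r,s)$ with $r^8=s^8=1$ actually define an algebra $\mathfrak{u}_{r,s}(G_2)$. The defining relations require that neither $rs^{-1}$ nor $(rs^{-1})^3$ degenerate the quantum Serre relations $(R5)$--$(R6)$ for the Cartan matrix of $G_2$ and that $(R4)$ make sense; concretely $rs^{-1}$ must avoid roots of unity of order $1,2,3,4,6$. Under the hypothesis $r^8=s^8=1$ this leaves exactly the case in which $rs^{-1}$ is a \emph{primitive} $8$th root of unity. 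Fixing a primitive $8$th root $q$, every admissible pair is therefore of the form $(r,s)=(q^k,q^{k+t})$ with $k\in\mathbb{Z}/8\mathbb{Z}$ and $t\in\{1,3,5,7\}$, a total of $8\cdot4=32$ ordered pairs; moreover exactly one of $q^k,q^{k+t}$ has order $8$, so $\mathrm{lcm}(|r|,|s|)=8=\ell$ throughout, which places us inside the scope of Theorem \ref{thm refineA}.

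Next I would invoke the reduction theorem. Since $rs^{-1}$ is a primitive $m$th root with $m=\ell=8\neq4,6$, Theorem \ref{thm isoG2}, in the refined form of Theorem \ref{thm refineA}, asserts that $\mathfrak{u}_{r,s}(G_2)\cong\mathfrak{u}_{r',s'}(G_2)$ as Hopf algebras if and only if $(r',s')=\zeta(r,s)$ or $(r',s')=\zeta(s,r)$ for some third root of unity $\zeta$. The key observation I would then make is that, by Corollary \ref{cor2 isoA}, the order of $r's'^{-1}$ must again equal $8$, so in any comparison between two algebras on our list both parameter pairs consist of $8$th roots of unity; hence the third root $\zeta$ must also be an $8$th root of unity, forcing $\zeta=1$. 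Consequently the only isomorphisms inside our family are the identity and the flip $(r,s)\leftrightarrow(s,r)$.

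Finally I would count orbits. Since $r\neq s$ always, the flip is fixed-point free on the $32$ admissible ordered pairs, so there are $32/2=16$ isoclasses. As the flip carries the exponent gap $t$ to $8-t$, the four gaps $\{1,3,5,7\}$ coalesce into the two flip-classes $\{1,7\}$ and $\{3,5\}$: the gap-$\{1,7\}$ orbits are the eight algebras $\mathfrak{u}_{q^k,q^{k+1}}(G_2)\cong\mathfrak{u}_{q^{k+1},q^k}(G_2)$ for $0\le k\le7$, which are rows $1$--$8$ of Table \ref{table G 8}, while the gap-$\{3,5\}$ orbits are the eight algebras $\mathfrak{u}_{q^k,q^{k+3}}(G_2)\cong\mathfrak{u}_{q^{k+3},q^k}(G_2)$ for $0\le k\le7$, which are rows $9$--$16$. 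These two batches are disjoint because $\{1,7\}\cap\{3,5\}=\emptyset$ in $\mathbb{Z}/8\mathbb{Z}$, and within each batch distinct values of $k$ give distinct orbits, so the table lists every isoclass exactly once (with row $10$ read as $\mathfrak{u}_{q,q^4}(G_2)\cong\mathfrak{u}_{q^4,q}(G_2)$). The only genuinely delicate step I anticipate is the first one — certifying that the defining relations of $\mathfrak{u}_{r,s}(G_2)$ restrict the parameter space to exactly these $32$ pairs — since everything afterwards is a routine application of the reduction theorem together with an orbit count.
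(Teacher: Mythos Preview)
Your approach is correct and matches what the paper does: it simply states the theorem as a direct consequence of the refined Isomorphism Reduction Theorem for $G_2$ and gives no further details, so you have merely filled in the argument the paper leaves implicit. Two small remarks: the appeal to Corollary~\ref{cor2 isoA} is unnecessary, since you are already restricting to pairs with $r^8=s^8=1$, so $\zeta=r'/r$ is automatically an $8$th root of unity and hence (being also a cube root) equal to $1$; and your parenthetical reading of row~10 as $\mathfrak{u}_{q,q^4}(G_2)\cong\mathfrak{u}_{q^4,q}(G_2)$ is indeed the intended statement, correcting an evident typo in the table.
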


\begin{remark}
In the definition of $\mathfrak{u}_{r, s}(G_2)$ \cite{Hu Wang  Pacific J. Math 2009},
we assume that $ r^4\neq s^4, r^6\neq s^6$, and $\ell $ is coprime to $3$.
Thus, while $\ell$ is typically assumed to be an odd prime,
we can extend this to include composite values such as $\ell=10, 14, \cdots$.
Leveraging the refined Isomorphism Reduction Theorem, we can derive many more isoclasses.
\end{remark}

\section{Alternative Approach to Reaching the Classification Results: via Radford's Classification of Simple Yetter-Drinfeld Modules}

As an effective verification of the correctness of our aforementioned classification results, we adopt a representation-theoretic approach here: Specifically, we use the characterization of the dimension distribution of simple Yetter-Drinfeld modules due to Radford for a class of pointed Hopf algebras. In this way, as some illustrative examples, we obtain the same classification results for type $A_2$ with lower orders $\{4, 6, 8\}$.

Assume $r=q^x, s=q^y$, where $q$ is a primitive $\ell$th root of unity.
In this section, we focus on the cases where the order of parameter $q$ is $\ell\in\{4, 6, 8\}$.

\smallskip
\noindent{\it 4.1. \;}
Assume that $q$ is a primitive $4$th root of unity.
In the previous section, we have used Theorem \ref{thm refineA} to get a complete classification of $\mathfrak{u}_{r,s}(\mathfrak{sl}_n)$, see Table \ref{table A 4}.
Here we will employ representation theory to get the same result.
First, note that Benkart and Whiterspoon established a sufficient condition for determining whether $\mathfrak{u}_{r,s}(\mathfrak{sl}_n)$ possesses a Drinfeld double structure.
\begin{theorem}\cite{Benkart Witherspoon Fields Institute Communications 2004} \label{thm double structure}
  Assume that $(y^{n-1}-y^{n-2}z+-\cdots +(-1)^{n-1}z^{n-1}, \ell)=1$,
  and let $\mathfrak{b}$ be the subalgebra of the small quantum groups $\mathfrak{u}_{r, s}(\mathfrak{sl}_n)$
  generated by the elements $\omega_i, e_i, \; 1\leq i<n$.
  There is an isomorphism of Hopf algebras $D(\mathfrak{b})\cong \mathfrak{u}_{r, s}(\mathfrak{sl}_n)$.
\end{theorem}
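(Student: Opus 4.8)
The plan is to realize $\mathfrak{u}_{r,s}(\mathfrak{sl}_n)$ as a double cross product $\mathfrak{b}'\bowtie\mathfrak{b}$ built from a non-degenerate skew Hopf pairing, and then to recognize that double cross product as the Drinfeld double $D(\mathfrak{b})$. Here $\mathfrak{b}$ is the sub-Hopf-algebra generated by $\omega_i^{\pm1}, e_i$ $(1\le i<n)$ as in the statement, and $\mathfrak{b}'$ is the sub-Hopf-algebra generated by $\omega_i'^{\pm1}, f_i$; both are pointed, and from the convex PBW Lyndon basis of Proposition~\ref{prop dimension of algebra} one reads off the triangular decomposition $\mathfrak{u}_{r,s}(\mathfrak{sl}_n)\cong\mathfrak{b}'\otimes\mathfrak{b}$ as vector spaces, the two tori together producing the full Cartan part. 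A dimension count then gives $\dim\mathfrak{b}=\dim\mathfrak{b}'=\ell^{\binom{n}{2}+n-1}$, hence $\dim D(\mathfrak{b})=(\dim\mathfrak{b})^2=\ell^{(n+2)(n-1)}=\dim\mathfrak{u}_{r,s}(\mathfrak{sl}_n)$; this matching is what will upgrade a surjective Hopf map into an isomorphism at the end.

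First I would construct the skew Hopf pairing $\langle\cdot,\cdot\rangle\colon\mathfrak{b}'\times\mathfrak{b}\to\mathbb{K}$, determined on generators by $\langle\omega_i',\omega_j\rangle=r^{\langle j,i\rangle}s^{-\langle i,j\rangle}$, $\langle f_i,e_j\rangle=\delta_{ij}(s_i-r_i)^{-1}$, and $\langle f_i,\omega_j\rangle=\langle\omega_i',e_j\rangle=0$, and extended to $\mathfrak{b}'\times\mathfrak{b}$ by the bialgebra-pairing axioms $\langle xy,a\rangle=\sum\langle x,a_{(1)}\rangle\langle y,a_{(2)}\rangle$ and $\langle x,ab\rangle=\sum\langle x_{(1)},b\rangle\langle x_{(2)},a\rangle$. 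The unavoidable but routine verification is well-definedness, i.e.\ that the pairing annihilates the defining relations (R1)--(R7) on each side; the quantum Serre relations (R5)--(R6) and the nilpotency relations (R7) are dispatched using the coproduct formulas \eqref{e_i} and \eqref{f_i} together with the fact that $r_is_i^{-1}$ is a primitive root of unity, so that the relevant $(r_is_i^{-1})$-binomial coefficients vanish precisely when they should.

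The heart of the matter is the non-degeneracy of this pairing. Filtering $\mathfrak{b}$ and $\mathfrak{b}'$ by the $Q_+$-grading and working with the PBW monomials, non-degeneracy decouples into two statements: (a) the pairing restricts to a perfect pairing between the positive and negative nilpotent parts, which holds because $\langle f_\alpha^{\,a},e_\alpha^{\,a}\rangle$ is a nonzero scalar multiple of an $(r_is_i^{-1})$-factorial and hence nonzero for $0\le a\le\ell-1$; and (b) the pairing restricts to a perfect pairing of the group-like parts, i.e.\ the $(n-1)\times(n-1)$ integer matrix $M=\bigl(\log_q\langle\omega_i',\omega_j\rangle\bigr)$ is invertible modulo $\ell$. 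Writing $r=q^{y}$, $s=q^{z}$, this $M$ is exactly the tridiagonal matrix with diagonal entries $y-z$, superdiagonal entries $z$ and subdiagonal entries $-y$; a two-term recurrence whose characteristic polynomial factors as $(t-y)(t+z)$ evaluates $\det M$ to $y^{n-1}-y^{n-2}z+\cdots+(-1)^{n-1}z^{n-1}$, and the hypothesis $\bigl(y^{n-1}-y^{n-2}z+\cdots+(-1)^{n-1}z^{n-1},\,\ell\bigr)=1$ is precisely the statement that $\det M$ is a unit modulo $\ell$, which gives (b). I expect the main obstacle to be the bookkeeping that legitimately reduces full non-degeneracy to (a) and (b): one must be sure that the torus and nilpotent contributions to the Gram matrix of a PBW basis do not interfere, and this is exactly where the triangular ($Q_+$-graded, $\omega$-filtered) structure is used.

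Finally I would invoke the standard theory of Drinfeld doubles (see \cite{Kas, Majid 1991}): a non-degenerate skew Hopf pairing between the finite-dimensional Hopf algebras $\mathfrak{b}$ and $\mathfrak{b}'$ identifies $\mathfrak{b}'$ with $(\mathfrak{b}^{*})^{\mathrm{cop}}$, and the associated double cross product $\mathfrak{b}'\bowtie\mathfrak{b}$ is by definition the Drinfeld double $D(\mathfrak{b})$. It then remains to check that the algebra map $\mathfrak{u}_{r,s}(\mathfrak{sl}_n)\to\mathfrak{b}'\bowtie\mathfrak{b}$ which is the identity on generators is a Hopf isomorphism: it is well defined and a bialgebra map because the cross relations of $\mathfrak{b}'\bowtie\mathfrak{b}$ --- the action of the $\omega_j$'s and $\omega_j'$'s on $e_i,f_i$ through the braiding characters determined by the pairing, together with the straightening rule $e_if_j-f_je_i=\delta_{ij}(r_i-s_i)^{-1}(\omega_i-\omega_i')$ --- are precisely relations (R2)--(R4); it is surjective since the $e_i,f_i,\omega_i^{\pm1},\omega_i'^{\pm1}$ generate; and it is injective, hence an isomorphism, by the dimension count in the first paragraph. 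The last verification --- that the double's straightening relation for $e_if_j$ coincides with (R4) --- is the only other delicate point, since it is there that the normalization $\langle f_i,e_i\rangle=(s_i-r_i)^{-1}$ gets pinned down.
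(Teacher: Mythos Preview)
The paper does not prove this statement at all: Theorem~\ref{thm double structure} is quoted verbatim from \cite{Benkart Witherspoon Fields Institute Communications 2004} and is invoked only as a black box in Section~4 to decide which $\mathfrak{u}_{r,s}(\mathfrak{sl}_3)$ have a Drinfeld double structure. There is therefore nothing in the paper to compare your argument against.

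That said, your plan is exactly the standard route (and is the one carried out in the cited reference): build the skew Hopf pairing on $\mathfrak{b}'\times\mathfrak{b}$, reduce non-degeneracy to the nilpotent part plus the invertibility of the matrix of exponents on the torus, compute that tridiagonal determinant as $y^{n-1}-y^{n-2}z+\cdots+(-1)^{n-1}z^{n-1}$, and conclude via the double cross product description of $D(\mathfrak{b})$ together with the dimension count from the PBW basis. Your determinant recursion and its evaluation are correct, and your identification of the only genuinely delicate points (decoupling torus and nilpotent contributions to the Gram matrix; matching the cross relation with $(R4)$ via the normalization of $\langle f_i,e_i\rangle$) is accurate.
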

When $n=3$, the candidate $3$ in Table \ref{table A 4} does not satisfy Theorem \ref{thm double structure},
so we do not know whether it has double structure or not.
Therefore we only need to consider the remaining three candidates.
Then we use the twist equivalence theorem (\cite{Benkart Pereira Witherspoon 2010} Theorem 4.12),
we find that the candidates $1, 2$ have the same dimension distribution.
So we reduce to the following candidates: $\mathfrak{u}_{1, q}(\mathfrak{sl}_3), \mathfrak{u}_{q, q^{-1}}(\mathfrak{sl}_3)$.
Their dimension distribution of simple modules (i.e., simple Yetter-Drinfeld modules) was calculated in \cite{Benkart Pereira Witherspoon 2010} are different.
The following result is due to \cite{Benkart Pereira Witherspoon 2010}.

\begin{prop}\label{prop A 4 new}
   Assume that $q$ is a primitive $4$th root of unity, then
   $$\mathfrak{u}_{1, q}(\mathfrak{sl}_3)\ncong \mathfrak{u}_{q, q^{-1}}(\mathfrak{sl}_3).$$
\end{prop}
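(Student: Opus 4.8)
The plan is to distinguish the two algebras by the dimension distributions of their simple modules, transporting the question into a Yetter--Drinfeld category via the Drinfeld double structure so that Radford's Theorem \ref{thm2.11} becomes applicable. First I would pass to a double: writing the parameters as powers of $q$, say $r=q^{a}$ and $s=q^{b}$, the integer $a^{2}-ab+b^{2}$ occurring in Theorem \ref{thm double structure} for $n=3$ equals $1$ when $(r,s)=(1,q)$ and $1-3+9=7$ when $(r,s)=(q,q^{-1})=(q,q^{3})$, both coprime to $4$; hence each of $\mathfrak{u}_{1,q}(\mathfrak{sl}_3)$ and $\mathfrak{u}_{q,q^{-1}}(\mathfrak{sl}_3)$ is isomorphic, as a Hopf algebra, to $D(\mathfrak{b})$ with $\mathfrak{b}=\mathfrak{u}_{r,s}(\mathfrak{b})$ its quantum Borel. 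By the category equivalence ${}_{\mathfrak{b}}\mathcal{YD}^{\mathfrak{b}}\simeq D(\mathfrak{b})\text{-mod}$ of \cite{Kas, Majid 1991}, the multiset of dimensions of the simple $\mathfrak{u}_{r,s}(\mathfrak{sl}_3)$-modules coincides with that of the simple Yetter--Drinfeld $\mathfrak{b}$-modules, and this multiset is invariant under algebra isomorphism of $\mathfrak{u}_{r,s}(\mathfrak{sl}_3)$ (hence also under Hopf isomorphism). A quicker Hopf-theoretic argument is in fact already available from Corollary \ref{cor2 isoA}, since $rs^{-1}$ has order $4$ for $\mathfrak{u}_{1,q}$ but order $2$ for $\mathfrak{u}_{q,q^{-1}}$; the purpose here is an independent representation-theoretic confirmation.

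Next I would parametrize the simple modules. Each quantum Borel $\mathfrak{b}$ is finite-dimensional and carries an $\mathbb{N}$-grading arising from its bosonization presentation $\mathfrak{b}\cong\mathcal{B}(V)\,\#\,\Bbbk G$, with degree-zero part $\mathfrak{b}_0=\Bbbk G$ for $G=\langle\omega_1,\omega_2\rangle\cong(\mathbb{Z}/4\mathbb{Z})^{2}$ and $\mathfrak{b}_n=0$ for $n$ large; the convex PBW--Lyndon basis of Proposition \ref{prop dimension of algebra}, restricted to the quantum Borel, makes this grading explicit. Radford's Theorem \ref{thm2.11} then yields a bijection $(\beta,g)\mapsto[\mathfrak{b}\bullet_\beta g]$ between $\widehat{G}\times G$ and the set of isomorphism classes of simple Yetter--Drinfeld $\mathfrak{b}$-modules, so the invariant to be computed is the multiset $\{\dim(\mathfrak{b}\bullet_\beta g):(\beta,g)\in\widehat{G}\times G\}$, consisting of $|G|^{2}=256$ nonnegative integers.

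Finally I would compute and compare these multisets. For each character $\beta\in\mathrm{Alg}_{\Bbbk}(\mathfrak{b},\Bbbk)$ and each $g\in G$, one realizes $\mathfrak{b}\bullet_\beta g$ as an explicit cyclic left $\mathfrak{b}$-module through $x\bullet_\beta a=\sum\beta(x_{(2)})\,x_{(3)}\,a\,S^{op}(x_{(1)})$, determines the annihilating left ideal in the PBW--Lyndon presentation of $\mathfrak{b}$ by a noncommutative Gr\"obner basis computation in \textsc{Singular}, and reads off $\dim(\mathfrak{b}\bullet_\beta g)$; running over all pairs $(\beta,g)$ assembles the full distribution for each algebra. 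It is convenient to first apply the Hopf $2$-cocycle twist equivalence of \cite{Benkart Pereira Witherspoon 2010} to identify $\mathfrak{u}_{1,q}(\mathfrak{sl}_3)$ with $\mathfrak{u}_{q,q^{2}}(\mathfrak{sl}_3)$, which trims the case analysis. This computation is carried out in \cite{Benkart Pereira Witherspoon 2010}, where the two resulting multisets are found to be distinct; since isomorphic algebras have identical simple-module dimension distributions, it follows that $\mathfrak{u}_{1,q}(\mathfrak{sl}_3)\ncong\mathfrak{u}_{q,q^{-1}}(\mathfrak{sl}_3)$.

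The main obstacle is the last step: there is no visible shortcut around the $256$ noncommutative Gr\"obner basis computations, which are feasible only by machine. A secondary technical point is to make the $\mathbb{N}$-grading with $\mathfrak{b}_0=\Bbbk G$ precise --- that is, to identify $\mathfrak{b}$ with the bosonization of its Nichols algebra, using the convex PBW--Lyndon basis and the structural results of \cite{Benkart Witherspoon Fields Institute Communications 2004} --- so that Radford's theorem genuinely applies.
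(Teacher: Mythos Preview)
Your proposal is correct and follows essentially the same approach as the paper: the paper does not give an independent proof of this proposition but explicitly attributes it to \cite{Benkart Pereira Witherspoon 2010}, outlining precisely the steps you describe---verifying the Drinfeld double hypothesis of Theorem \ref{thm double structure}, passing to the Yetter--Drinfeld category, invoking Radford's Theorem \ref{thm2.11}, and citing the machine computation of the two dimension distributions. One minor remark: the twist equivalence with $\mathfrak{u}_{q,q^{2}}(\mathfrak{sl}_3)$ only identifies dimension distributions, not the Hopf algebras themselves (they are distinct by Table \ref{table A 4}), and it does not actually trim any cases here since only two algebras are being compared.
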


\begin{remark}
  Although we have found that $\mathfrak{u}_{1, q}(\mathfrak{sl}_3)\ncong \mathfrak{u}_{q, q^2}(\mathfrak{sl}_3)$ directly from Theorem \ref{thm refineA}, it is worthwhile to mention that they have the same dimension distribution of their simple modules.
\end{remark}

\begin{remark}
  We have proved $\mathfrak{u}_{1, q}(\mathfrak{sl}_3)\ncong \mathfrak{u}_{q, q^{-1}}(\mathfrak{sl}_3)$ when $q$ is a primitive $4$th root of unity, via calculating their dimension distribution of their modules.
  For general $n$, this method seems more complicated.
  However, we can get a more general conclusion
  $$\mathfrak{u}_{1, q}(\mathfrak{sl}_n)\ncong \mathfrak{u}_{q, q^{-1}}(\mathfrak{sl}_n)$$
  when $q$ is a primitive $4$th root of unity, by Corollary \ref{cor isoA}.
\end{remark}

\vspace{1em}
\noindent{\it 4.2. \;}
Assume that $q$ is a primitive $6$th root of unity.
In the previous section, we have used Theorem \ref{thm refineA} to get Table \ref{table A 6}.
When $n=3$, the candidates $4, 8$ do not satisfy Theorem \ref{thm double structure},
so we do not know whether they have double structure or not.
Therefore we only need to consider the remaining seven candidates.
Then we use the twist equivalence theorem,
we find that candidates $1, 2, 3$ have the same dimension distribution.
So it suffices to consider the following four candidates:
$\mathfrak{u}_{1, q}(\mathfrak{sl}_3), \mathfrak{u}_{q^2, q^5}(\mathfrak{sl}_3), \mathfrak{u}_{q, q^3}(\mathfrak{sl}_3),
\mathfrak{u}_{q, q^5}(\mathfrak{sl}_3)$.

Finally, we calculate the simple Yetter-Drinfeld modules distribution of these candidates respectively:

\vspace{0.5em}
 \noindent$(1)$ $\dim(H_{1, q}\bullet_\beta g), g\in G(H_{1, q}), \beta\in \widehat{G(H_{1, q})}:  $

\begin{gather*}  \{\, 1^{36}, 3^{72}, 6^{72}, 8^{36},  10^{72}, 15^{144}, 24^{72}, 25^{72}, 27^{108}, 48^{72}, \\
54^{72}, 56^{36}, 87^{72}, 120^{72}, 124^{36}, 165^{72}, 216^{36}\, \};
\end{gather*}

\noindent
$(2)$ $\dim(H_{q^2, q^5}\bullet_\beta g), g\in G(H_{q^2, q^5}), \beta\in \widehat{G(H_{q^2, q^5})}:$

$$\{\,1^{36}, 3^{72}, 8^{36}, 36^{432}, 72^{432}, 216^{288}\,\};$$

\noindent
$(3)$ $\dim(H_{q, q^3}\bullet_\beta g), g\in G(H_{q^3, q^5}), \beta\in \widehat{G(H_{q, q^3})}:$

$$\{\,1^{36}, 3^{72}, 6^{72}, 7^{36}, 15^{72}, 27^{36}, 36^{324}, 72^{324}, 108^{324}\,\};$$

\noindent
$(4)$ $\dim(H_{q, q^5}\bullet_\beta g), g\in G(H_{q^3, q^5}), \beta\in \widehat{G(H_{q, q^3})}:$

$$\{\,1^{36}, 3^{72}, 6^{72}, 7^{36}, 15^{72}, 27^{36}, 36^{324}, 72^{324}, 108^{324}\,\}.$$
Here $1^{16}$ means that there are $16$ simple modules whose dimension are $1$.
Hence, we obtain these pairwise non-isomorphic small quantum groups:
$\mathfrak{u}_{1, q}(\mathfrak{sl}_3)$, $\mathfrak{u}_{q^2, q^5}(\mathfrak{sl}_3)$, $\mathfrak{u}_{q, q^3}(\mathfrak{sl}_3)$.
Thus, we have
\begin{prop}\label{prop A6new}
  Assume that $q$ is a primitive $6$th root of unity,
  then $$\mathfrak{u}_{1, q}(\mathfrak{sl}_3), \quad \mathfrak{u}_{q^2, q^5}(\mathfrak{sl}_3)  \quad \text{and} \quad\mathfrak{u}_{q, q^3}(\mathfrak{sl}_3)$$ are pairwise non-isomorphic.
\end{prop}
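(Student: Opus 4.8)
The plan is to distinguish the three small quantum groups $\mathfrak{u}_{1,q}(\mathfrak{sl}_3)$, $\mathfrak{u}_{q^2,q^5}(\mathfrak{sl}_3)$, $\mathfrak{u}_{q,q^3}(\mathfrak{sl}_3)$ purely by their dimension distributions of simple Yetter–Drinfeld modules, exploiting the equivalence ${}_H\mathcal{YD}^H \simeq D(H)\text{-mod}$ for $H = \mathfrak{u}_{r,s}(\mathfrak{b})$ when the relevant two-parameter small quantum group has a Drinfeld double structure. First I would invoke Theorem \ref{thm double structure} to confirm that each of the three candidates indeed has a Drinfeld double presentation $D(\mathfrak{b}) \cong \mathfrak{u}_{r,s}(\mathfrak{sl}_3)$ by checking that the corresponding coprimality condition $(y^2 - yz + z^2,\, 6) = 1$ holds for the exponents $(x,y) \in \{(0,1),(2,5),(1,3)\}$; this is the entry point that licenses the use of Radford's Theorem \ref{thm2.11}. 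Having that, the coradical of each $H$ is $\mathbb{K}G$ for the finite abelian group $G$ generated by the group-likes, and every simple module arises as $H\bullet_\beta g$ for $(\beta,g) \in \widehat{G}\times G$.

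Next I would carry out the actual computation of the multiset $\{\dim(H\bullet_\beta g) : g\in G,\ \beta\in\widehat{G}\}$ for each of the three candidates, following the method of Pereira implemented in \textsc{Singular}::\textsc{Plural} as indicated in the remark after Theorem \ref{thm2.11}. The output is exactly the three tables displayed above: $\{1^{36},3^{72},6^{72},8^{36},10^{72},15^{144},24^{72},25^{72},27^{108},48^{72},54^{72},56^{36},87^{72},120^{72},124^{36},165^{72},216^{36}\}$ for $\mathfrak{u}_{1,q}$, $\{1^{36},3^{72},8^{36},36^{432},72^{432},216^{288}\}$ for $\mathfrak{u}_{q^2,q^5}$, and $\{1^{36},3^{72},6^{72},7^{36},15^{72},27^{36},36^{324},72^{324},108^{324}\}$ for $\mathfrak{u}_{q,q^3}$. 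Since these three multisets are manifestly distinct (for instance, the first has simple modules of dimension $8$ and $10$, the second of dimension $36$ with no module of dimension $6$ or $7$, and the third of dimension $7$; and their cardinalities differ), no Hopf-algebra isomorphism — indeed no algebra isomorphism preserving the module category — can exist between any two of them. This yields pairwise non-isomorphism and completes the proof.

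\textbf{Main obstacle.} The conceptual content is light; the real difficulty is computational. The hard part will be the explicit determination of $\dim(H\bullet_\beta g)$: one must realize $H\bullet_\beta g$ concretely as a quotient of $H$ by the left ideal generated by the appropriate $\beta$-shifted relations, set up the noncommutative Gröbner-basis computation over the PBW–Lyndon basis of $\mathfrak{u}_{r,s}(\mathfrak{b})$ of type $A_2$ (dimension $6^{4}$ for the Borel part when $\ell = 6$), and then aggregate dimensions over all $|\widehat{G}\times G| = |G|^2$ pairs. Organizing this so that the symmetry under the action of $\widehat{G}$ and $G$ is used to cut down the number of independent computations — and verifying the multiplicities $72 = |G|$, $36 = |G|/2$, etc., are counted correctly — is where the care is needed. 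Once the three multisets are in hand, the separation argument is immediate, and as a bonus it independently re-confirms the Hopf-algebraic classification of Proposition \ref{prop A 6} for the type-$A_2$ case with $\ell = 6$.
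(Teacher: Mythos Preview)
Your proposal is correct and follows essentially the same approach as the paper: verify the Drinfeld double structure via Theorem \ref{thm double structure}, invoke Radford's Theorem \ref{thm2.11} to parametrize simple Yetter--Drinfeld modules, compute the three dimension multisets (exactly as listed in the paper), and conclude from their pairwise distinctness. One minor slip: the total cardinalities of these multisets do \emph{not} differ---each equals $|\widehat{G}\times G|=|G|^2$---so drop that parenthetical; the distinctness of the multisets themselves (e.g., presence of dimension $7$ versus $8$ versus neither among the small simples) already suffices.
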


\smallskip
\noindent{\it 4.3. \;}
Assume that $q$ is an primitive $8$th root of unity.
In the previous section, we have used Theorem \ref{thm refineA} to get Table \ref{table A 8}.
When $n=3$, the $16$th candidate does not satisfy Theorem \ref{thm double structure},
so we do not know whether it has double structure or not.
Therefore we only need to consider the remaining $15$ candidates.
Similarly, we find that the candidates $1$--$7$ have the same dimension distribution;
and the candidates $10$ and $12$ have the same dimension distribution.
So it suffices to consider the following candidates:
$$\mathfrak{u}_{1, q}(\mathfrak{sl}_3), \quad \mathfrak{u}_{q, q^3}(\mathfrak{sl}_3), \quad \mathfrak{u}_{q, q^5}(\mathfrak{sl}_3), \quad
\mathfrak{u}_{q, q^6}(\mathfrak{sl}_3), \quad \mathfrak{u}_{q, q^7}(\mathfrak{sl}_3).$$

Finally,  we calculate the dimension distributions of these candidates, respectively:

\vspace{0.5em}
 \noindent$(1)\;\dim(H_{1, q}\bullet_\beta g), g\in G(H_{1, q}), \beta\in \widehat{G(H_{1, q})}:  $

 \vspace{1em}
  \noindent$  \{1^{64}, 3^{128}, 6^{128}, 8^{64},  10^{128}, 15^{256}, 24^{128}, 27^{64}, 28^{128}, 35^{128}, 36^{128}, 42^{256}, 46^{128}, 48^{256},$

  \vspace{1em}
   \noindent$60^{128}, 64^{64}, 80^{128}, 90^{128}, 96^{128}, 98^{64}, 132^{128}, 144^{128},
  150^{128}, 192^{128}, 204^{128}, 260^{128}, $

  \vspace{1em}
  \noindent$270^{128}, 336^{128}, 342^{64}, 420^{128}, 512^{64} \}; $

   \vspace{1em}
  \noindent$(2)\;\dim(H_{q, q^6}\bullet_\beta g), g\in G(H_{q, q^6}), \beta\in \widehat{G(H_{q, q^6})}:  $

 \vspace{1em}
  \noindent$  \{1^{64}, 3^{128}, 6^{128}, 8^{64},  10^{128}, 15^{256}, 24^{128}, 27^{64}, 28^{128}, 35^{128}, 36^{128}, 42^{256}, 46^{128}, 48^{256},$

  \vspace{1em}
   \noindent$60^{128}, 64^{64}, 80^{128}, 90^{128}, 96^{128}, 98^{64}, 132^{128}, 144^{128},
  150^{128}, 192^{128}, 204^{128}, 260^{128}, $

  \vspace{1em}
  \noindent$270^{128}, 336^{128}, 342^{64}, 420^{128}, 512^{64} \}; $

  \vspace{1em}
  \noindent$(3)\;\dim(H_{q, q^3}\bullet_\beta g), g\in G(H_{q, q^3}), \beta\in \widehat{G(H_{q, q^3})}:$
  \vspace{1em}

  \noindent$\{1^{64}, 3^{128}, 6^{128}, 8^{64}, 10^{128}, 12^{128}, 24^{128},$
  $26^{64}, 42^{128}, 64^{128}, 128^{768}, 192^{768}, 256^{768}\}; $

  \vspace{1em}
  \noindent$(4)\;\dim(H_{q, q^7}\bullet_\beta g), g\in G(H_{q, q^7}), \beta\in \widehat{G(H_{q, q^7})}:$
  \vspace{1em}

  \noindent$\{1^{64}, 3^{128}, 6^{128}, 8^{64}, 10^{128}, 12^{128}, 24^{128},$
  $26^{64}, 42^{128}, 64^{128}, 128^{768}, 192^{768}, 256^{768}\}; $

  \vspace{1em}
  \noindent$(5)\;\dim(H_{q, q^5}\bullet_\beta g), g\in G(H_{q, q^5}), \beta\in \widehat{G(H_{q, q^5})}:$

  \vspace{1em}
  \noindent$\{1^{64}, 3^{128},  8^{64}, 27^{36}, 64^{1152}, 128^{1152},
   512^{1536}\}.$
 \vspace{1em}

In summary, we obtain the next Proposition.
\begin{prop}\label{prop A8new}
   Assume that $q$ is a primitive $8$th root of unity, then the following three small quantum groups are pairwise non-isomorphic:
  $$\mathfrak{u}_{1, q}(\mathfrak{sl}_3), \quad \mathfrak{u}_{q, q^5}(\mathfrak{sl}_3), \quad \text{and}\quad \mathfrak{u}_{q, q^{-1}}(\mathfrak{sl}_3).$$
\end{prop}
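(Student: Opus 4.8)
The plan is to follow precisely the representation-theoretic strategy laid out in the preamble of Section 4.3: reduce the sixteen candidates of Table~\ref{table A 8} down to a handful of genuinely distinct ones, and then separate those by the dimension distributions of their simple Yetter--Drinfeld modules. First I would invoke Theorem~\ref{thm double structure} with $n=3$ to discard candidate $16$ (i.e.\ $\mathfrak{u}_{q,q^{-1}}(\mathfrak{sl}_3)$, which arises here with $(y,z)$ making $y^2-yz+z^2$ share a factor with $\ell=8$), so that the remaining fifteen candidates all carry a genuine Drinfeld double structure $D(H)$ with $H=\mathfrak{u}_{r,s}(\mathfrak{b})$; this is what makes ${}_H\mathcal{YD}^H\simeq D(H)\text{-mod}$ and hence licenses the use of Radford's Theorem~\ref{thm2.11}. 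Next I would apply the Hopf $2$-cocycle twist-equivalence theorem (Theorem 4.12 of \cite{Benkart Pereira Witherspoon 2010}) to collapse candidates $1$--$7$ into one twist-equivalence class and candidates $10$, $12$ into another, leaving the five representatives $\mathfrak{u}_{1,q}(\mathfrak{sl}_3)$, $\mathfrak{u}_{q,q^3}(\mathfrak{sl}_3)$, $\mathfrak{u}_{q,q^5}(\mathfrak{sl}_3)$, $\mathfrak{u}_{q,q^6}(\mathfrak{sl}_3)$, $\mathfrak{u}_{q,q^7}(\mathfrak{sl}_3)$.

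The core computational step is then to compute, for each of these five $H$, the multiset $\{\dim(H\bullet_\beta g)\mid g\in G(H),\ \beta\in\widehat{G(H)}\}$. Concretely I would fix the group $G\cong(\mathbb{Z}/8)^4$ of group-likes (generated by $\omega_1,\omega_2,\omega_1',\omega_2'$, each of order $\ell=8$), realize $H=\mathfrak{u}_{r,s}(\mathfrak{b})$ on its convex PBW--Lyndon basis of Proposition~\ref{prop dimension of algebra}, and for each pair $(\beta,g)$ compute the dimension of the simple Yetter--Drinfeld module $H\bullet_\beta g$ via the left action $x\bullet_\beta a=\sum\beta(x_{(2)})x_{(3)}aS^{op}(x_{(1)})$. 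In practice this is the Gröbner-basis computation that Pereira implemented in \textsc{Singular::Plural} \cite{Singular}; I would run it for the five representatives and tabulate the distributions. The displayed tables in the excerpt record the outcome: $(1)$ and $(4)$ (i.e.\ $\mathfrak{u}_{1,q}$ and $\mathfrak{u}_{q,q^6}$) share one distribution, $(3)$ and $(4)$-in-the-$8$th-root list (i.e.\ $\mathfrak{u}_{q,q^3}$ and $\mathfrak{u}_{q,q^7}$) share another, and $\mathfrak{u}_{q,q^5}$ has a third; moreover $\mathfrak{u}_{q,q^{-1}}$ is handled separately as in Proposition~\ref{prop A 4 new}, since its distribution differs from that of $\mathfrak{u}_{1,q}$ already from the $4$th-root-of-unity computation.

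From the tabulated data I would then read off the conclusion: the distribution of $\mathfrak{u}_{1,q}(\mathfrak{sl}_3)$ contains dimensions such as $27^{64}$, $35^{128}$, $512^{64}$ that do not occur in the distribution of $\mathfrak{u}_{q,q^5}(\mathfrak{sl}_3)$ (which features $64^{1152}$, $512^{1536}$), and neither coincides with the distribution of $\mathfrak{u}_{q,q^{-1}}(\mathfrak{sl}_3)$. Since the dimension distribution of simple modules is an isomorphism invariant of a finite-dimensional algebra, the three algebras are pairwise non-isomorphic, which is exactly the assertion of Proposition~\ref{prop A8new}. As a consistency check, this agrees with the Hopf-theoretic classification of Proposition~\ref{prop A 8}, where these three lie in distinct isoclasses (rows $1$, $14$, $16$ of Table~\ref{table A 8}).

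The main obstacle is entirely computational: the dimension of $H=\mathfrak{u}_{r,s}(\mathfrak{b})$ for $\mathfrak{sl}_3$ at an $8$th root of unity is $\ell^{(3+2)(3-1)}=8^{10}$ on the full algebra side, so even restricting to the Borel part $H$, one must perform $|G|\cdot|\widehat G|=8^8$ noncommutative Gröbner-basis reductions to get all the $H\bullet_\beta g$; organizing this so it terminates in reasonable time — exploiting the $G$-grading, symmetry under the group action, and the twist-equivalences already used to prune candidates — is the delicate part. The purely structural reductions (double structure, twist equivalence, invariance of the distribution) are routine given the cited theorems; it is getting the five distributions reliably, and reading off a dimension that appears in one but not another, that carries the real weight of the proof.
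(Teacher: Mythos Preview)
Your overall strategy matches the paper's exactly, but there is a genuine confusion that breaks the argument for one of the three algebras. You identify candidate $16$ of Table~\ref{table A 8} with $\mathfrak{u}_{q,q^{-1}}(\mathfrak{sl}_3)$ and then propose to treat $\mathfrak{u}_{q,q^{-1}}(\mathfrak{sl}_3)$ ``separately as in Proposition~\ref{prop A 4 new}''. This is wrong on two counts. First, when $q$ is a primitive $8$th root of unity one has $q^{-1}=q^7$, so $\mathfrak{u}_{q,q^{-1}}(\mathfrak{sl}_3)=\mathfrak{u}_{q,q^7}(\mathfrak{sl}_3)$ is candidate~$13$, not candidate~$16$; candidate~$16$ is $\mathfrak{u}_{1,q^4}(\mathfrak{sl}_3)$ (the ``$\mathfrak{u}_{q,q^{-1}}$'' appearing in that row of the table is a typographical slip). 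Second, Proposition~\ref{prop A 4 new} concerns a primitive $4$th root of unity and says nothing about the $8$th-root case, so it cannot be invoked here. The correct step, and the one the paper takes, is simply to observe that $\mathfrak{u}_{q,q^{-1}}(\mathfrak{sl}_3)$ is already among your five representatives: its dimension distribution is item~(4) of the displayed list (labeled $H_{q,q^7}$). One then compares the three distributions (1), (4), (5) directly---for instance (1) contains $27^{64}$ while (4) contains $26^{64}$, and (5) is visibly different from both---to conclude that $\mathfrak{u}_{1,q}(\mathfrak{sl}_3)$, $\mathfrak{u}_{q,q^{-1}}(\mathfrak{sl}_3)$, and $\mathfrak{u}_{q,q^5}(\mathfrak{sl}_3)$ are pairwise non-isomorphic.

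A smaller inaccuracy: the group of group-likes of the Borel $H=\mathfrak{u}_{r,s}(\mathfrak{b})$ is generated by $\omega_1,\omega_2$ only (not by $\omega_1',\omega_2'$ as well), so $G(H)\cong(\mathbb{Z}/8)^2$ and $|G|\cdot|\widehat G|=8^4$, not $8^8$. This is consistent with the multiplicities in the displayed distributions summing to $4096$.
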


\begin{remark}
If we assume $q^2=t$, then $t$ is a primitive $4$th root of unity.
As shown in the previous subsection,
$\mathfrak{u}_{1, q^2}(\mathfrak{sl}_3), \mathfrak{u}_{q^2, q^4}(\mathfrak{sl}_3),
 \mathfrak{u}_{q^2, q^6}(\mathfrak{sl}_3)$ have a double structure despite not satisfying Theorem \ref{thm double structure}.
\end{remark}

\begin{remark}
  In this section, we only consider type $A_2$.
  However, for type $A_3$, many candidates do not have Drinfeld double structure. By the way, this property is one of the key features that the HP Hopf algebras Beliakova's team is seeking in their studies on non-semisimple TQFT should possess.
  For types $B, C, D, F_4, G_2$, while $\ell\geq 8$ is theoretically plausible, it becomes computationally infeasible due to excessive demands for large $\ell$.
\end{remark}

\begin{remark}
For orders $4\le \ell\le 8$ of parameter $q$, In addition to the $45$ standard one-parameter small quantum groups,
 we identify $209$ new exotic one-parameter non-standard small quantum group isoclasses across all types:
 $47$ for type $A$,
$31$  for type $B$,
 $62$ for type $C$,
 $36$ for type $D$,
 $31$ for type $F_4$,
 $47$ for type $G_2$.
\end{remark}

\textbf{Acknowledgments}\; We are grateful to the referee for his/her valuable comments.
In the process of reorganizing our rigorous proof, we have found that it is possible to provide a complete and neat classification method and corresponding results only via the refined Isomorphism Reduction Theorem and the convex PBW Lyndon basis Theorem, for instance, see Propositions \ref{prop A 4}, \ref{prop A 6} and \ref{prop A 8}.

\vskip15pt

\bibliographystyle{amsalpha}

\end{document}